\documentclass[a4paper, 10pt]{amsart}
\textwidth16.1cm \textheight21cm \oddsidemargin-0.1cm
\evensidemargin-0.1cm
\usepackage{amsmath}
\usepackage{amssymb}
\usepackage{color}
\usepackage{hyperref}
\usepackage[inline]{enumitem}
\usepackage{mathrsfs}
\usepackage{stmaryrd}
\usepackage{comment}
\usepackage[normalem]{ulem}

\theoremstyle{plain}
\newtheorem{theorem}{\bf Theorem}[section]
\newtheorem{proposition}[theorem]{\bf Proposition}
\newtheorem{lemma}[theorem]{\bf Lemma}
\newtheorem{basic-lemma}[theorem]{\bf Basic Lemma}
\newtheorem{claim}{\bf Claim}

\theoremstyle{definition}
\newtheorem{example}[theorem]{\bf Example}

\newtheorem{remark}[theorem]{\bf Remark}
\newtheorem{problem}[theorem]{\bf Problem}

\newcommand{\N}{\mathbb N}
\newcommand{\Z}{\mathbb Z}
\newcommand{\R}{\mathbb R}
\newcommand{\Q}{\mathbb Q}

\renewcommand{\P}{\mathbb P}
\newcommand{\DP}{\negthinspace : \negthinspace}

\renewcommand{\t}{\, | \,}
\newcommand{\FF}{\text{\rm FF}}
\newcommand{\BF}{\text{\rm BF}}

\providecommand\llb{\llbracket}
\providecommand\rrb{\rrbracket}

\DeclareMathOperator{\lcm}{lcm}

\newcommand{\red}{{\text{\rm red}}}
\newcommand{\mon}{{\text{\rm mon}}}

\numberwithin{equation}{section}

\hypersetup{
	pdftitle={Strongly primary monoids},
	pdfauthor={Geroldinger, Gotti, and Tringali},
	pdfmenubar=false,
	pdffitwindow=true,
	pdfstartview=FitH,
	colorlinks=true,
	linkcolor=blue,
	citecolor=magenta,
	urlcolor=cyan
}

\begin{document}

\title{On strongly primary monoids, with a focus on Puiseux monoids}

\author{Alfred Geroldinger}
\address{Institute of Mathematics and Scientific Computing\\ University of Graz,  Heinrichstr. 36\\ 8010 Graz, Austria }
\email{alfred.geroldinger@uni-graz.at}
\urladdr{https://imsc.uni-graz.at/geroldinger}

\author{Felix Gotti}
\address{Department of Mathematics, UC Berkeley, Berkeley, CA 94720, USA}
\curraddr{Department of Mathematics, Massachusetts Institute of Technology, Cambridge, MA 02139, USA}
\email{felixgotti@berkeley.edu, fgotti@mit.edu}
\urladdr{https://felixgotti.com}

\author{Salvatore Tringali}
\address{Institute of Mathematics and Scientific Computing, University of Graz, Heinrichstr. 36, 8010 Graz, Austria}
\curraddr{School of Mathematical Sciences,
	Hebei Normal University, Shijiazhuang, Hebei province, 050024 China}
\email{salvatore.tringali@uni-graz.at, salvo.tringali@gmail.com}
\urladdr{http://imsc.uni-graz.at/tringali}

\thanks{This work was supported by the Austrian Science Fund FWF, Project Number P33499-N. The second author was partially supported by the NSF award DMS-1903069.}

\keywords{Primary monoids, strongly primary monoids, Puiseux monoids, one-dimensional local domains, sets of lengths, local tameness}

\subjclass[2010]{Primary: 20M13; Secondary: 20M14, 13A05}

\begin{abstract}
Primary and strongly primary monoids and domains play a central role in the ideal and factorization theory of commutative monoids and domains. It is well-known that primary monoids satisfying the ascending chain condition on divisorial ideals (e.g., numerical monoids) are strongly primary; and the multiplicative monoid of non-zero elements of a one-dimensional local domain  is primary and it is strongly primary if the domain is Noetherian. In  the present paper, we focus on the study of additive submonoids of the non-negative rationals, called Puiseux monoids. It is easy to see that Puiseux monoids are primary monoids, and we provide conditions ensuring that they are strongly primary. Then we study local and global tameness of strongly primary Puiseux monoids; most notably, we  establish an algebraic characterization of when a Puiseux monoid is globally tame. Moreover, we obtain a result on the structure of
 sets of lengths of all locally tame strongly primary monoids.
\end{abstract}

\maketitle
\bigskip

\section{Introduction}
\label{sec:intro}

\medskip
A (commutative and cancellative) monoid $H$ is primary if it is not a group and if for each two non-invertible elements $a, b \in H$ there is $n \in \N$ such that $b^n \in aH$. Furthermore, $H$ is strongly primary if for each non-invertible element $a \in H$ there is $n \in \N$ such that $\mathfrak m^n \subset aH$, where $\mathfrak m $ is the non-empty set of non-invertible elements of $H$. A domain $R$ is (strongly) primary if its multiplicative monoid of non-zero elements is (strongly) primary. Thus, a domain is primary if and only if it is one-dimensional and local. Every primary Mori monoid is strongly primary, which implies that numerical monoids and one-dimensional local Mori domains are strongly primary. If $R$ is a weakly-Krull Mori domain, then its localizations at height-one prime ideals are strongly primary, whence its monoid of ($v$-invertible divisorial) ideals is a coproduct of strongly primary monoids. So we see that (strongly) primary monoids play a central role in the ideal and factorization theory of commutative monoids and domains.

The goal of the present paper is twofold. In Section \ref{sec:Puiseux monoids} we concentrate on Puiseux monoids, that is, additive submonoids of the non-negative rationals. These are primary monoids  generalizing numerical monoids, and have attracted quite a lot of attention in recent literature, also in relation to the study of semigroup algebras (e.g., \cite{Co-Go19a, Go19c}). In Subsections \ref{subsec:conductor} and \ref{subsec:strongly-primary}, we inquire into the algebraic properties of Puiseux monoids, with a focus on conductors and conditions enforcing strong primariness. Then we investigate the arithmetic properties of strongly primary Puiseux monoids, with emphasis on local and global tameness. To recall these concepts, let $H$ be an atomic monoid. The local tame degree $\mathsf t (H,u)$ (of an atom $u$) is the smallest $N \in \N_0 \cup \{\infty\}$ such that, in any given factorization of a multiple $a \in uH$ of $u$,  at most~$N$ atoms have to be replaced by at most~$N$ new atoms to obtain a  factorization of $a$ that contains $u$. The global tame degree $\mathsf t (H)$ is the supremum of the local tame degrees over all atoms, and we say that $H$ is globally tame if $\mathsf t (H) < \infty$. We provide a characterization of global tameness (Theorem \ref{thm:a SPPM has finite elasticity iff it is globally tame}), and we carefully work out the similarities and differences between the arithmetic of strongly primary Puiseux monoids on one side and the arithmetic of strongly primary domains and finitely primary monoids on the other side. Theorem \ref{thm:Puiseux-monoids-and-their-quotient-groups} shows that the cardinality of the class consisting of strongly primary Puiseux monoids that are either globally tame or locally but not globally tame is at least as large as the cardinality of the class consisting of additive subgroups of the rationals.

In Section \ref{sec:sets of lengths of LTSPM} we study the arithmetic of locally tame strongly primary monoids, with a focus on their sets of lengths. There are Puiseux monoids having every finite set $L \subset \N_{\ge 2}$ as a set of lengths. In contrast to this, sets of lengths of locally tame strongly primary monoids are well structured. Our main result on sets of lengths is Theorem \ref{thm:structure-of-sets-of-lengths-and-unions}. Beyond that we provide the first example of a primary BF-monoid having irrational elasticity (Example~\ref{exa:strongly-primary-with-irrational-elasticity}).
\medskip

\section{Background on primary monoids}
\label{sec:background}

\smallskip
\subsection{General Notation} We denote by $\mathbb P$, $\N$,  $\N_0$,  $\Z$, $\Q $, and $\R$ the set of prime numbers, positive integers, non-negative integers, integers, rational numbers, and real numbers, respectively. For $a, b \in \R \cup \{\pm \infty \}$, we let $\llb a, b \rrb$ denote the discrete interval $\{ x \in \Z : a \le x \le b \}$, and we let $[a, b]$, $]a, b]$, etc. denote the usual real intervals. Let~$G$ be an abelian group, and take $A, B \subset G$. Then $A + B = \{a+b : a \in A, b \in B \}$ denotes the sumset and $A - B = \{a-b : a \in A, b \in B \}$ denotes the difference set of $A$ and $B$. In addition, for $k \in \N$, we let $kA = A+ \dots + A$ and $k \cdot A = \{ka : a \in A\}$ denote the $k$-fold sumset of $A$ and the dilation of $A$ by $k$, respectively. For a non-empty subset $L \subset \N$, we call  $\rho (L) = \sup L/ \min L$ the {\it elasticity} of $L$ and we set $\rho ( \{0\}) = 1$.

\smallskip
\subsection{Monoids} Throughout this paper, the term `monoid' refers to a commutative cancellative semigroup with identity element while the term `domain' refers to a commutative ring with identity and without non-zero zero divisors. In the present section and in Section~\ref{sec:sets of lengths of LTSPM}, we use multiplicative notation because some of the main examples we have in mind stem from ring theory (note that the multiplicative subset $R^{\bullet} = R\setminus \{0\}$ of a domain $R$ is a monoid). Only in Section~\ref{sec:Puiseux monoids}, where we study additive submonoids of the non-negative rationals, we use additive notation. We briefly recall some ideal-theoretic and arithmetic concepts in the context of monoids.

Let $H$ be a monoid. Then $H^{\times}$ denotes the group of invertible elements, $H_{\red} = H/H^{\times}$ the associated reduced monoid, and $\mathsf q (H)$ the quotient group of $H$.
We define
\begin{itemize}
	\item $H' = \{ x \in \mathsf q (H) : \text{there exists some $N \in \N$ such that $x^n \in H$ for all $n \ge N$} \}$,
	\item $\widetilde H = \{ x \in \mathsf q (H) : \text{there exists some $N \in \N$ such that $x^N \in H$} \}$,  and
	\item $\widehat H = \{ x \in \mathsf q (H) : \text{there exists $c \in H$ such that $cx^n \in H$ for all $n \in \N$} \}$;
\end{itemize}
and we call $H'$ the {\it seminormal closure}, $\widetilde H$ the {\it root closure}, and $\widehat H$ the {\it complete integral closure} of $H$, respectively. Then we have
\begin{equation} \label{eq2.1}
	H \subset H' \subset \widetilde H \subset \widehat H \subset \mathsf q (H) \,,
\end{equation}
where all inclusions can be strict. We say that $H$ is {\it seminormal} if $H=H'$, {\it root-closed} if $H = \widetilde H$, and {\it completely integrally closed} if $H = \widehat H$. Note that $H'$ is seminormal, $\widetilde H$ is root-closed, but $\widehat H$ may not be completely integrally closed (\cite[Theorem~3]{Ge-HK-Le95}). However, if the conductor $(H \DP \widehat H) = \{ x \in \mathsf q(H) : x \widehat H \subset H \}$ is non-empty, then $\widehat H$ is completely integrally closed.
The monoid $H$ is said to be
\begin{itemize}
	\item {\it $v$-Noetherian} (or a {\it Mori monoid}) if it satisfies the ascending chain condition on divisorial ideals,
	\item a {\it  Krull monoid} if it is a completely integrally closed Mori monoid,
	\item a {\it valuation monoid} if for all $a, b \in H$ either $a \mid b$ or $b \mid a$,
	\item {\it primary} if  $H \ne H^{\times}$ and for all $a, b \in H \setminus H^{\times}$ there exists $n \in \N$ such that $b^n \in aH$, and
	\item {\it strongly primary} if $H \ne H^{\times}$ and for each $a \in H \setminus H^{\times}$ there exists $n \in \N$ such that $(H \setminus H^{\times})^n \subset aH$. We let $\mathcal M (a)$ denote the smallest $n \in \N$ having this property, and we set
	\[
		\mathcal M(H) = \sup \{ \mathcal M(u) : u \in \mathcal A (H)\}.
	\]
\end{itemize}
Strongly primary monoids are the objects of study in the present paper, and we discuss them in Subsection~\ref{background-strongly-primary}.
If $H$ is primary, then it follows from \cite[Proposition~1]{Ge96} and \cite[Lemma~2.5]{Ge-Ro19b} that
\begin{equation} \label{primary-closure}
	H' = \widetilde H \quad \text{and} \quad H^{\times} =  \widetilde H^{\times} \cap H = \widehat H^{\times} \cap H \,.
\end{equation}
Further, we use that a valuation monoid $H$ is completely integrally closed if and only if either $H=H^{\times}$ or $H$ is primary (\cite[Chapter~16.3]{HK98}). For a set $P$, we denote by $\mathcal F (P)$ the free commutative monoid with basis $P$. Every element $a \in \mathcal F (P)$ can be written uniquely in the form
\[
	a = \prod_{p \in P} p^{\mathsf v_p (a)} \,,
\]
where $\mathsf v_p (a) \in \N_0$ is the $p$-adic valuation of $a$, and we call $|a| = \sum_{p \in P} \mathsf v_p (a)$ the {\it length} of $a$.

\smallskip
\subsection{Arithmetic of monoids} We proceed to gather the arithmetic concepts needed in the sequel. For details and proofs we refer to \cite[Chapter~1]{Ge-HK06a}. We denote by $\mathsf Z(H) = \mathcal F ( \mathcal A (H_{\red}))$ the factorization monoid of $H$ and let $\pi \colon \mathsf Z(H) \to H_{\red}$ be the factorization homomorphism. For an element $a \in H$,
\begin{itemize}
	\item $\mathsf Z(a) = \pi^{-1} (aH^{\times})$ is the {\it set of factorizations} of $a$,
	\item $\mathsf L(a) = \{ |z| : z \in \mathsf Z (a) \} \subset \N_0$ is the {\it set of lengths} of $a$, and
    \item $\mathscr L(H) = \{\mathsf L(a) : a \in H \}$ is  the {\it system of sets of lengths} of $H$.
\end{itemize}
Note that $\mathsf L(a) = \{0\}$ if and only if $a \in H^{\times}$, and that $1 \in \mathsf L (a)$ if and only if $a \in \mathcal A (H)$. We say that $H$ is
\begin{itemize}
	\item {\it atomic} if $\mathsf Z(a) \ne \emptyset$ for every $a \in H$,
	\item a {\it \BF-monoid} if $\mathsf L(a)$ is finite and non-empty for every $a \in H$, and
	\item an {\it \FF-monoid} if $\mathsf Z(a)$ is finite and non-empty for every $a \in H$.
\end{itemize}
Our focus in this section and in Section~\ref{sec:Puiseux monoids} is on (local and global) tameness. In Section~\ref{sec:sets of lengths of LTSPM} we show that locally tame strongly primary monoids share a variety of further arithmetic finiteness properties. For the remainder of this subsection, suppose that $H$ is an atomic monoid. To introduce the concept of tameness, we first need a distance function. Consider two factorizations $z, z' \in \mathsf Z (a)$, say
\[
	z = u_1 \cdot \ldots \cdot u_{\ell}v_1 \cdot \ldots \cdot v_m \quad \text{and} \quad z' = u_1 \cdot \ldots \cdot u_{\ell}w_1 \cdot \ldots \cdot w_n,
\]
where $\ell, m, n \in \N_0$ and all $u_i, v_j, w_k$ are in $\mathcal A (H_{\red})$ with $\{v_1, \dots, v_m\} \cap \{w_1, \dots, w_n\} = \emptyset$. Then we call $\mathsf d (z,z') = \max \{m,n\} \in \N_0$ the \emph{distance} between $z$ and $z'$. For an atom $u \in \mathcal A (H_{\red})$, the {\it local tame degree} $\mathsf t (H,u) \in \N_0$ is the smallest $N \in \N_0 \cup \{\infty\}$ with the following property:
\begin{itemize}
	\item[] If $\mathsf Z (a) \cap u \mathsf Z (H) \ne \emptyset$ and $z \in \mathsf Z (a)$, then there exists  $z' \in \mathsf Z (a) \cap u \mathsf  Z (H)$ such that $\mathsf d (z,z') \le N$.
\end{itemize}
We say that $H$ is
\begin{itemize}
	\item {\it locally tame} if $\mathsf t (H,u) < \infty$ for all $u \in \mathcal A (H_{\red})$, and
	\item {\it \textup{(}globally\textup{)} tame} if the {\it tame degree} $\mathsf t (H) = \sup \{\mathsf  t(H,u) : u \in \mathcal A (H_{\red}) \}$ is finite.
\end{itemize}
If $u$ is a prime, then every factorization of a multiple $a \in uH$ of $u$ contains $u$, whence $\mathsf t(H, u) = 0$.
The atomic monoid $H$ is factorial if and only if all its atoms are prime, whence $H$ is factorial if and only if $\mathsf t (H)=0$. If $u$ is not a prime, then $\mathsf t (H,u) \in \N_0$ is the smallest $N \in \N_0 \cup \{\infty\}$ with the following property:
\begin{enumerate}
	\item[]
	If \ $m \in \N$ and $v_1, \dots , v_m \in \mathcal A(H)$ are
	such that \ $u \t v_1 \cdot \ldots \cdot v_m$, but $u$ divides no
	proper subproduct of $v_1\cdot \ldots \cdot v_m$, then there exist
	$\ell \in \N$ and  $u_2, \ldots , u_{\ell} \in \mathcal A
	(H)$ such that $v_1 \cdot \ldots \cdot v_m = u u_2 \cdot \ldots
	\cdot u_{\ell}$ and \ $\max \{\ell ,\, m \} \le N$ (in other words, $\max \{1+\min \mathsf L (u^{-1}v_1\cdot \ldots\cdot v_m), m\} \le N$).
\end{enumerate}
The property of being tame is studied via the invariant
\[
	\Lambda (H) = \sup \{\min \mathsf L(a) : a \in H \} \in \N_0 \cup \{\infty\}
\]
and via the {\it local elasticities} $\rho_k (H)$ for $k \in \N$. To recall their definition, let us fix $k \in \N$. Then $\rho_k (H) = k$ if $H = H^{\times}$, and
\[
	\rho_k (H) = \sup \{ \sup L : L \in \mathscr L(H) \ \text{with} \ k \in L \} \in \N_{\ge k} \cup \{\infty\} \ \text{ otherwise}.
\]
As a result, it is not hard to verify that
\begin{equation} \label{eq:elasticity in terms of local elasticities}
	\rho (H) := \sup \{\rho (L) : L \in \mathscr L(H) \} = \lim_{k \to \infty} \frac{\rho_k (H)}{k}.
\end{equation}
We call $\rho(H)$ the {\it elasticity} of $H$, and we say that $H$ has {\it accepted elasticity} if there is  $L  \in \mathscr L (H)$ such that $\rho (L) = \rho (H)$.
If $H$ is not factorial, then (by \cite[Theorem~1.6.6]{Ge-HK06a})
\begin{equation} \label{elasticity-tameness}
	\max \{2, \rho (H) \} \le \mathsf t (H)\,.
\end{equation}
For an atom $u \in \mathcal A(H)$, let
$\tau (H,u) $ be the smallest $N \in \N_0 \cup \{\infty\}$ with the following property:
\begin{itemize}
    \item[] If $m \in \N$ and $v_1, \ldots, v_m \in \mathcal A (H)$ such that $u$ divides the product $v_1 \cdot \ldots \cdot v_m$ but no proper subproduct, then
        $\min \mathsf L \left( u^{-1}v_1 \cdot \ldots \cdot v_m \right) \le N$.
\end{itemize}
If $H$ is strongly primary, then
\[
	\begin{aligned}
	\min \mathsf L \left( u^{-1}v_1 \cdot \ldots \cdot v_m \right) & \le \Lambda (H) \,, \quad m < \mathcal M (u) \,, \quad \text{and} \\
	\min \mathsf L \left( u^{-1}v_1 \cdot \ldots \cdot v_m \right) & \le \max \mathsf L \left( u^{-1}v_1 \cdot \ldots \cdot v_m \right) \le \max \mathsf L (v_1 \cdot \ldots \cdot v_m) \le \rho_{\mathcal M (u)-1} (H) \,.
	\end{aligned}
\]
Thus, we have the following lemma.

\smallskip
\noindent
\begin{basic-lemma}  
Let $H$ be a strongly primary monoid.
	\begin{enumerate}
	\item  Then \begin{equation} \label{local-tameness-versus-tau}
	       \text{$H$ is locally tame if and only if $\tau (H,u) < \infty$ for all $u \in \mathcal A (H)$}\,.
	       \end{equation}
	\item If
	      \begin{equation} \label{enforcing-local-tameness}
		  \text{either} \quad     \Lambda (H) < \infty \quad \text{or} \quad \rho_k (H) < \infty \ \text{for all $k \in \N$} \,,
	     \end{equation}
	     then $H$ is locally tame.
	\end{enumerate}
\end{basic-lemma}

\smallskip
\subsection{Strongly primary monoids} \label{background-strongly-primary} Whereas primary monoids need not be atomic (e.g., $(\Q_{\ge 0}, +)$ is primary but not atomic), strongly primary monoids satisfy the stronger condition of being BF-monoids.

\begin{lemma} \label{lem:characterization-strongly-primary}
	Let $H$ be a monoid with non-empty maximal ideal $\mathfrak m = H \setminus H^{\times}$. Then the following statements are equivalent.
	\begin{enumerate}
		\item[(a)] $H$ is strongly primary.
		\item[(b)] $H$ is a \BF-monoid and for all $u \in \mathcal A (H)$ there is  $n \in \N$ such that $\mathfrak m^n \subset uH$.
		\item[(c)] $H$ is primary and there exist $a \in \mathfrak m$ and $n \in \N$ such that $\mathfrak m^n \subset aH$.
	\end{enumerate}
\end{lemma}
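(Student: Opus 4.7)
The plan is to establish the cyclic chain (a)$\Rightarrow$(b)$\Rightarrow$(c)$\Rightarrow$(a), which suffices since each pair of neighbors already isolates a distinct issue.

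For (a)$\Rightarrow$(b), the second clause of (b) is just the definition of strong primariness applied to atoms, so the real content is showing that $H$ is a BF-monoid. I would proceed by a cancellation argument: if $a \in \mathfrak m$ admits a factorization $a = x_1 \cdots x_k$ with $x_i \in \mathfrak m$, set $n = \mathcal M(a)$; should $k > n$, we have $x_1 \cdots x_n = az$ for some $z \in H$ because $\mathfrak m^n \subset aH$, and then $a = x_1 \cdots x_k = az \cdot x_{n+1} \cdots x_k$ cancels to $1 = z \cdot x_{n+1} \cdots x_k$, forcing $x_{n+1}, \ldots, x_k \in H^\times$, a contradiction. Hence $k \le \mathcal M(a)$ for every such factorization; iterating any proper decomposition $a = xy$ with $x, y \in \mathfrak m$ must terminate at an atom factorization, and $\max \mathsf L(a) \le \mathcal M(a)$, which yields the BF-property.

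For (b)$\Rightarrow$(c), given $a, b \in \mathfrak m$, I would write $a = u_1 \cdots u_k$ with $u_i \in \mathcal A(H)$ using the BF-property, pick $n_i \in \N$ with $\mathfrak m^{n_i} \subset u_i H$ by the atom clause of (b), and observe that with $N = n_1 + \cdots + n_k$ the grouping $b^N = b^{n_1} \cdots b^{n_k}$ is divisible by $u_1 \cdots u_k = a$, so $H$ is primary. The remaining clause of (c) is immediate upon taking $a$ to be any atom, which exists because $\mathfrak m \ne \emptyset$ and BF-monoids are atomic.

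For (c)$\Rightarrow$(a), fix $a \in \mathfrak m$ and $n \in \N$ with $\mathfrak m^n \subset aH$. A straightforward induction on $j \ge 1$ shows $\mathfrak m^{jn} \subset a^j H$: the inductive step splits a product of $(j+1)n$ non-units as the first $jn$ (divisible by $a^j$ by induction) times the last $n$ (divisible by $a$). Then for arbitrary $b \in \mathfrak m$, primariness supplies $k \in \N$ with $a^k \in bH$, so $\mathfrak m^{kn} \subset a^k H \subset bH$, verifying strong primariness. The only delicate part of the whole argument is the bounded-length cancellation in (a)$\Rightarrow$(b); the other two implications reduce to short algebraic manipulations.
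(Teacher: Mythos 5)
Your proposal is correct and follows essentially the same route as the paper: the same cyclic chain (a)$\Rightarrow$(b)$\Rightarrow$(c)$\Rightarrow$(a), with the cancellation argument giving the bound $k \le \mathcal M(a)$ for (a)$\Rightarrow$(b) and the inclusion $\mathfrak m^{mn} \subset a^m H \subset bH$ for (c)$\Rightarrow$(a). You merely spell out details the paper leaves implicit (the cancellation step and the implication (b)$\Rightarrow$(c), which the paper calls obvious), and these details are sound.
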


\begin{proof}
	(a) $\Rightarrow$ (b) If $a \in H \setminus H^{\times}$ and $a = u_1 \cdot \ldots \cdot u_k$ with $k \in \N$ and $u_1, \dots, u_k \in H \setminus H^{\times}$, then $k \le \mathcal M(a)$. Thus, $a$ can be written as a product of atoms, and $\sup \mathsf L(a) \le \mathcal M(a)$.
	
	 (b) $\Rightarrow$ (c) It is obvious.
	
	(c) $\Rightarrow$ (a) For every $b \in \mathfrak m$ there is $m \in \N$ such that $a^m  \subset bH$, whence $\mathfrak m^{mn} \subset a^mH \subset bH$.
\end{proof}

Thus, strongly primary monoids are primary BF-monoids.
However, there are seminormal primary BF-monoids and primary FF-monoids that are not strongly primary (see \cite[Example~4.7]{Ge-Ha08a} and Example~\ref{ex:when empty conductor SP/dense/BF/ACCP are not equivalent for PMs}, respectively). Example~\ref{ex:when empty conductor SP/dense/BF/ACCP are not equivalent for PMs} provides (primary) Puiseux monoids that are BF but not strongly primary.

To discuss examples of strongly primary monoids, let us first mention that all primary Mori monoids (\cite[Lemma~3.1]{Ge-Ha-Le07}) are strongly primary. Moreover, saturated submonoids of primary monoids, strongly primary monoids, primary Mori monoids are respectively primary, strongly primary, and  primary Mori, unless they are groups (\cite[Proposition~2.4.4 and Theorem~2.7.3]{Ge-HK06a}). Main examples of strongly primary monoids stem from ring theory. Let $R$ be a domain. Then its multiplicative monoid $R^{\bullet}$ of non-zero elements is primary if and only if $R$ is one-dimensional and local. If $R$ is a one-dimensional local Mori domain, then $R^{\bullet}$ is a primary Mori monoid and, therefore, strongly primary. If $R^{\bullet}$ is strongly primary, then it follows from \cite[Theorems~3.8 and~3.9]{Ge-Ro19b} that
\begin{equation} \label{strongly-primary-domains-are-locally-tame}
	\text{either} \quad \Lambda (R^{\bullet}) < \infty \quad \text{or} \quad \rho_k (R^{\bullet}) < \infty \ \text{for all $k \in \N$} \,,
\end{equation}
whence $R^{\bullet}$ is locally tame by \eqref{enforcing-local-tameness}, and we have the following characterization.

\noindent
\begin{theorem}[Characterization of global tameness for strongly primary domains] \label{global-tameness-domains} \cite[Theorems~3.8 and~3.9]{Ge-Ro19b}
	For a strongly primary domain, the following statements are equivalent.
	\begin{enumerate}
		\item[(a)] $R^{\bullet}$ is globally tame.
		\item[(b)] $\rho (R^{\bullet}) < \infty$.
		\item[(c)] $\rho_k (R^{\bullet}) < \infty$ for all $k \in \N$.
		\item[(d)] $\Lambda (R^{\bullet}) = \infty$.
	\end{enumerate}
\end{theorem}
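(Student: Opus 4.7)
The plan is to establish the cycle $(a) \Rightarrow (b) \Rightarrow (c) \Rightarrow (d) \Rightarrow (a)$. The first three implications are formal consequences of the general material recalled in this section; the last is where one genuinely uses that $R$ is a domain, together with the deep input \eqref{strongly-primary-domains-are-locally-tame}.

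For $(a) \Rightarrow (b)$, if $R^{\bullet}$ is not factorial then \eqref{elasticity-tameness} gives $\rho(R^{\bullet}) \le \mathsf t(R^{\bullet}) < \infty$, while the factorial case is trivial since then $\rho(R^{\bullet}) = 1$. For $(b) \Rightarrow (c)$, fix $k \in \N$ and $L \in \mathscr L(R^{\bullet})$ with $k \in L$; then $\min L \le k$, hence $\sup L = \rho(L)\min L \le \rho(R^{\bullet})\cdot k$, which yields $\rho_k(R^{\bullet}) \le k\rho(R^{\bullet}) < \infty$. For $(c) \Rightarrow (d)$, I would argue by contrapositive: supposing $\Lambda(R^{\bullet}) = N < \infty$, pick any atom $u \in \mathcal A(R^{\bullet})$, and for each $k \ge 1$ select $m_k \in \llb 1, N \rrb \cap \mathsf L(u^k)$, which is possible because $k \in \mathsf L(u^k)$ and $\min \mathsf L(u^k) \le N$. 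Since $\{m_k\}_{k \ge 1}$ lies in the finite set $\llb 1, N \rrb$, some $m$ occurs infinitely often, and then $\rho_m(R^{\bullet}) \ge \sup \mathsf L(u^k) \ge k$ for infinitely many $k$, forcing $\rho_m(R^{\bullet}) = \infty$ and contradicting (c).

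The substantive step is $(d) \Rightarrow (a)$. From $\Lambda(R^{\bullet}) = \infty$ and the dichotomy \eqref{strongly-primary-domains-are-locally-tame} we obtain $\rho_k(R^{\bullet}) < \infty$ for every $k \in \N$, and the Basic Lemma then yields local tameness. To upgrade to global tameness I would combine the two inequalities $m < \mathcal M(u)$ and $\tau(R^{\bullet}, u) \le \rho_{\mathcal M(u) - 1}(R^{\bullet})$ (both recalled just above the Basic Lemma) into
\[
\mathsf t(R^{\bullet}, u) \le \max\bigl\{\mathcal M(u) - 1,\; 1 + \rho_{\mathcal M(u)-1}(R^{\bullet})\bigr\} \quad \text{for every } u \in \mathcal A(R^{\bullet}).
\]
It therefore suffices to find a uniform bound $\mathcal M(R^{\bullet}) = \sup_{u \in \mathcal A(R^{\bullet})} \mathcal M(u) < \infty$.

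This uniform bound is the main obstacle, and it cannot be extracted from the monoid structure alone: it is here that one must exploit that $R$ is a \emph{domain}. Following \cite[Theorems~3.8 and~3.9]{Ge-Ro19b}, the strategy is to pass to the complete integral closure $\widehat R$ and use the conductor $(R \DP \widehat R)$, together with the hypothesis $\Lambda(R^{\bullet}) = \infty$, to show that the smallest $n$ with $\mathfrak m^n \subset uR$ is controlled by $\mathfrak m$-adic data independent of the atom $u$. This ring-theoretic case analysis is the real work of the theorem, and my proposal is essentially to reproduce it in order to produce the required uniform $\mathcal M(R^{\bullet})$ and thereby close the cycle.
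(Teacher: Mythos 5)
The first thing to note is that the paper contains no proof of this theorem at all: it is imported verbatim from \cite[Theorems~3.8 and~3.9]{Ge-Ro19b}, with the citation placed in the theorem header, and is used later only as background. So in architecture your proposal matches the paper's treatment (the substance is delegated to Geroldinger--Roitman), while adding correct proofs of the formal implications. Your (a) $\Rightarrow$ (b) via \eqref{elasticity-tameness}, your (b) $\Rightarrow$ (c) via $\rho_k(R^\bullet) \le k\,\rho(R^\bullet)$, and your pigeonhole contrapositive for (c) $\Rightarrow$ (d) are all correct, and indeed valid in any BF-monoid (for the last one you only need an atom, which exists because $R^\bullet$ is a BF-monoid with $R^\bullet \ne R^\times$). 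Your reduction of (d) $\Rightarrow$ (a) to the finiteness of $\mathcal M(R^\bullet)$ is also legitimate; in fact Lemma~\ref{lem:global-tameness} already gives that $\mathcal M(H) < \infty$ implies global tameness for every strongly primary monoid (via $\omega(H,u) \le \mathcal M(u)$ and $\mathsf t(H) \le \omega(H)^2$), so your displayed bound on $\mathsf t(R^\bullet,u)$ and the appeal to $\rho_k$-finiteness are not even needed once the uniform bound is in hand.

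The gap is exactly where you say it is, and it is not closed: nothing in the proposal derives $\mathcal M(R^\bullet) < \infty$ (equivalently, by Lemma~\ref{lem:global-tameness}, $\bigcap_{u \in \mathcal A(R^\bullet)} u R^\bullet \ne \emptyset$) from $\Lambda(R^\bullet) = \infty$, and the dichotomy \eqref{strongly-primary-domains-are-locally-tame} is likewise taken on faith; both are precisely the content of \cite[Theorems~3.8 and~3.9]{Ge-Ro19b}. Your closing paragraph (``pass to $\widehat R$ and use the conductor'') is a statement of intent rather than an argument: there is no construction, no case analysis, and no identified property of domains actually put to work beyond the (correct) observation that one is needed, since the implication is special to domains and is exactly what the paper's Puiseux-monoid results are designed to contrast with. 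So, measured against the paper, your treatment coincides with it --- cite the external theorem; measured as a blind proof, the implication (d) $\Rightarrow$ (a) remains an acknowledged IOU, and that is the one place the proposal is genuinely incomplete.
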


Finitely primary monoids are the best investigated class of strongly primary monoids. We recall the definition. A monoid $H$ is said to be {\it finitely primary}  (of rank $s \in \N$ and exponent $\alpha \in \N$) if $H$ is a submonoid of a factorial monoid $F = F^{\times} \times \mathcal F (\{p_1, \dots, p_s\})$ satisfying
\[
	H \setminus H^{\times} \subset p_1 \cdot \ldots \cdot p_s F \quad \text{and} \quad (p_1 \cdot \ldots \cdot p_s)^{\alpha}F \subset H \,.
\]
If this holds, then $H^{\times} = F^{\times} \cap H$,  $\widehat H = F$, and $(H \DP \widehat H)\ne \emptyset$ (note that finitely primary monoids need not be Mori; see \cite{HK-Ha-Ka04}). If $H$ is finitely primary (with $H \subset F$ as above), then it follows from \cite[Theorem~3.1.5]{Ge-HK06a} that
\begin{equation}  \label{finitely-primary-monoids-are-locally-tame}
	\text{either} \quad \Lambda (H) < \infty \quad \text{or} \quad \rho_k (H) < \infty \ \text{for all $k \in \N$} \,,
\end{equation}
whence $H$ is locally tame by \eqref{enforcing-local-tameness}, and we have the following characterization.

\noindent
\begin{theorem}[Characterization of global tameness for finitely primary monoids] \label{global-tameness-finitely-primary}\cite[Theorems~2.9.2 and~3.1.5]{Ge-HK06a}
	For a finitely primary monoid $H$, the following statements are equivalent.
	\begin{enumerate}
		\item[(a)] $H$ is globally tame.
		\item[(b)] $\rho (H) < \infty$.
		\item[(c)] $\rho_k (H) < \infty$ for all $k \in \N$.
		\item[(d)] $\Lambda (H) = \infty$.
		\item[(e)] $s=1$.
	\end{enumerate}
\end{theorem}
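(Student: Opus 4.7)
Since $H$ is finitely primary, it is strongly primary, and by \eqref{finitely-primary-monoids-are-locally-tame} combined with the Basic Lemma it is locally tame. The implication (a)$\,\Rightarrow\,$(b) is immediate from \eqref{elasticity-tameness}, while (b)$\,\Rightarrow\,$(c) follows from the elementary inequality $\rho_k(H) \le k \rho(H)$. I plan to close the cycle by showing that each of (a)--(d) is in fact equivalent to the structural condition (e), via a case analysis on the rank $s$ that uses only the defining embedding $H \subset F = F^\times \times \mathcal F(\{p_1, \dots, p_s\})$.

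In the case $s = 1$, I first observe that every atom $u \in \mathcal A(H)$ has $\mathsf v_p(u) \le 2\alpha - 1$: otherwise the factorization $u = p^\alpha \cdot (\epsilon_u p^{\mathsf v_p(u) - \alpha})$ would exhibit two non-units of $H$, since the second factor has $p$-valuation $\ge \alpha$ and hence lies in $p^\alpha F \subset H$. It follows that every factorization of any $a \in H$ has length in $[\mathsf v_p(a)/(2\alpha-1),\, \mathsf v_p(a)]$, giving $\rho(H) \le 2\alpha - 1$ (so (b), and hence (c), hold) together with $\min \mathsf L(p^N) \to \infty$ as $N \to \infty$ (so (d) holds). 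The same valuation bound yields $\mathcal M(u) \le \mathsf v_p(u) + \alpha \le 3\alpha - 1$ uniformly in $u$, because any product of $m \ge \mathsf v_p(u) + \alpha$ non-units has $p$-valuation $\ge m$, and hence after dividing by $u$ lies in $p^\alpha F \subset H$. Plugging this uniform bound into the estimate $\mathsf t(H,u) \le \max\{\mathcal M(u),\, 1 + \rho_{\mathcal M(u)-1}(H)\}$ read off from the display preceding the Basic Lemma then gives global tameness, i.e., (a).

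In the case $s \ge 2$, I must show that (a)--(d) all fail. The key construction mimics the rank-2, exponent-1 prototype $H \supset p_1 p_2 F$: the element $a_n = p_1^n p_2^n$ admits both the length-$n$ factorization $(p_1 p_2)^n$ and the length-$2$ factorization $(p_1 p_2^{n-1})(p_1^{n-1} p_2)$, so $\{2, n\} \subset \mathsf L(a_n)$. For general $s \ge 2$ and $\alpha \ge 1$ the same mechanism produces elements $a_n \in H$ with $\{2, m_n\} \subset \mathsf L(a_n)$ and $m_n \to \infty$, by concentrating excess valuations on two coordinates for the short factorization and using ``equidistributed'' atoms divisible by $(p_1 \cdots p_s)^\alpha$ for the long one. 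This gives $\rho(H) = \rho_2(H) = \infty$, falsifying (b) and (c) and, via \eqref{elasticity-tameness}, also (a). Finally, $\Lambda(H) < \infty$ is established by a greedy factorization: given $a = \epsilon \prod p_i^{n_i}$, one iteratively removes an atom that uses up one surplus coordinate at a time, terminating once the remaining factor lies in $(p_1 \cdots p_s)^\alpha F$ and can be finished with one more atom, with total length uniformly bounded by a function of $s$ and $\alpha$.

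The \textbf{main obstacle} is the combinatorial construction in the case $s \ge 2$: one must verify that the candidate ``short'' and ``long'' factorizations genuinely lie in $H$ after correcting for the unit group $F^\times \cap H$, and that the greedy bound for $\Lambda(H)$ truly exploits $s \ge 2$ (since, as shown above, $s = 1$ yields $\Lambda(H) = \infty$). Both points rest on the delicate interplay between the exponent $\alpha$ and the unit group of $F$.
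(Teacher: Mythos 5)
The paper does not actually prove this theorem: it is quoted verbatim from \cite[Theorems~2.9.2 and~3.1.5]{Ge-HK06a}, so there is no internal argument to compare with, and your proposal amounts to reconstructing the textbook proof directly from the defining sandwich $H \setminus H^{\times} \subset p_1 \cdots p_s F$, $(p_1 \cdots p_s)^{\alpha} F \subset H$. Your logical skeleton (show $s=1$ forces (a)--(d) and $s \ge 2$ kills all of (a)--(d)) is the right one, and the $s=1$ analysis is correct: atoms satisfy $\mathsf v_p(u) \le 2\alpha-1$, hence $\rho(H) \le 2\alpha - 1$, $\Lambda(H)=\infty$ via $\min \mathsf L(p^N) \ge N/(2\alpha-1)$, and $\mathcal M(u) \le 3\alpha-1$ uniformly, which gives global tameness either through your $\tau$-type bound or more cleanly through Lemma~\ref{lem:global-tameness}(c)$\Rightarrow$(a).

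Two steps in your $s \ge 2$ sketch are, however, overstated and would fail as literally written, though both are repairable with the one tool you already use in the $s=1$ case (every factorization of $b$ has length at most $\mathsf v_{p_i}(b)$ for each fixed $i$, since all non-units are divisible by every $p_i$). First, the claim $\{2, m_n\} \subset \mathsf L(a_n)$: the two ``blocks'' of the short factorization, e.g.\ $b_1=(p_1\cdots p_s)^{\alpha}p_1^{n}$ and $b_2=(p_1\cdots p_s)^{\alpha}(p_2\cdots p_s)^{n}$, are guaranteed to lie in $H$ but need \emph{not} be atoms of $H$ (the monoid may contain many elements outside $(p_1\cdots p_s)^{\alpha}F$), so $2 \in \mathsf L(a_n)$ and in particular $\rho_2(H)=\infty$ cannot be asserted. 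What is true is $\min \mathsf L(a_n) \le \max \mathsf L(b_1)+\max \mathsf L(b_2) \le \mathsf v_{p_1}(b_1)+\mathsf v_{p_2}(b_2)=2\alpha$, while $\max \mathsf L(a_n)\to\infty$ because all coordinates of $a_n$ grow and copies of $(p_1\cdots p_s)^{\alpha}$ can be split off; this gives $\rho(H)=\infty$ and, by pigeonhole over the boundedly many possible minimal lengths, $\rho_k(H)=\infty$ for some $k \le 2\alpha$, which together with \eqref{elasticity-tameness} falsifies (a), (b), (c). (Note also that for $s\ge 3$ the excess must be spread over the two complementary blocks of coordinates, not literally ``two coordinates''.) Second, the greedy bound for $\Lambda(H)$: you cannot ``remove an atom that uses up one surplus coordinate'', since the atoms of $H$ are not at your disposal, and the terminal factor in $(p_1\cdots p_s)^{\alpha}F$ cannot in general be ``finished with one more atom''. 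The correct move is: if some $\mathsf v_{p_i}(a) < 2\alpha$ then $\min \mathsf L(a) \le \max \mathsf L(a) \le \mathsf v_{p_i}(a) < 2\alpha$; otherwise split $a=b_1b_2$ as above with one designated coordinate of each block equal to $\alpha$, and bound each block's length by that coordinate, giving $\Lambda(H) \le 2\alpha$. With these corrections your plan does yield a complete, elementary proof of the cited theorem; the unit group causes no trouble precisely because $(p_1\cdots p_s)^{\alpha}F \subset H$ absorbs arbitrary units of $F$.
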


It is clear that an additive submonoid $H$ of $(\N_0^s, +)$, with $H \setminus \{0\} \subset \N^s$ and $(f, \ldots , f)+ \N_0^s \subset H$ for some $f \in \N$, is finitely primary of rank $s$. An additive submonoid $H$ of $(\N_0, +)$ with finite complement $\N_0 \setminus H$ is called a {\it numerical monoid}. Numerical monoids are finitely generated and finitely primary of rank one, so they are strongly primary. Finitely primary monoids occur naturally in ring theory (\cite[Proposition~2.10.7]{Ge-HK06a}) and in module theory (\cite[Theorems~5.1 and~5.3]{Ba-Ge-Gr-Sm15}).
If $H \subset F$ is finitely primary (with all notation as above), then
\[
	\mathsf v(H) = \{ (n_1, \dots, n_s) \in \N_0^s : \varepsilon p_1^{n_1} \cdot \ldots \cdot p_s^{n_s} \in H \ \text{for some $\varepsilon \in F^{\times}$} \} \subset (\N_0^s, +)
\]
is called the {\it value semigroup} of $H$. Clearly, $\mathsf v(H)$ is finitely primary again. Value semigroups play a crucial role in the study of one-dimensional local domains (see, e.g., \cite{Ba-An-Fr00a, Ba-Do-Fo97}).

\smallskip
The next lemma offers a further (but less straightforward) sufficient condition for local tameness. As the second part of the lemma indicates, such a condition ensures the local tameness of the seminormalization of any strongly primary monoid whose complete integral closure is Krull.

\smallskip
\begin{lemma} \label{2.1}
	Let $H$ be a strongly primary monoid.
	\begin{enumerate}
		\item If $(H \DP \widehat H) \ne \emptyset$, then $H$ is locally tame.
		\item \label{item:strongly primary monoids lemma 2} If $\widehat H$ is Krull, then  $H'$ is a locally tame primary Mori monoid with $\widehat{H'} = \widehat H$ and $(H' \DP \widehat H)\ne \emptyset$.
	\end{enumerate}
\end{lemma}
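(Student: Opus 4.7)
The plan is to prove part~(1) directly and then to bootstrap part~(2) off part~(1) after establishing the necessary structural facts about $\widehat H$ and $H'$.

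For part~(1), my strategy is to invoke the Basic Lemma by verifying one of the two sufficient conditions in~\eqref{enforcing-local-tameness}. Fix $c \in (H \DP \widehat H)$, so $c\widehat H \subseteq H$. The degenerate case $c \in H^\times$ forces $\widehat H = H$, which together with primariness and strong primariness makes $H$ a DVM (primary, completely integrally closed, and atomic), where local tameness is immediate. In the main case where $c$ is a non-unit, the plan is to show $\rho_k(H) < \infty$ for every $k \in \N$. Given $a \in H$ with one factorization of length $k$ and another of length $\ell$, multiplying by a suitable power of $c$ embeds the $v_j$-data into $H$ via $c\widehat H \subseteq H$; combined with the strong primariness inequality $\mathfrak m^{\mathcal M(u)} \subseteq uH$ and the cardinality control on factorizations of $c$, this should produce a uniform bound on $\ell$ in terms of $k$ and~$c$.

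For part~(2), the first step is to verify that $\widehat H$ is a DVM. Primariness of $\widehat H$ follows by a conductor-type argument: given $a, b \in \widehat H \setminus \widehat H^\times$, pick a common non-unit $d \in H$ with $da, db \in H$ (non-unit in $H$ by $\widehat H^\times \cap H = H^\times$ from~\eqref{primary-closure}); primariness of $H$ then supplies $n$ with $(db)^n \in (da)H$, and the complete integral closedness of $\widehat H$ ensures the resulting quotient lies in $\widehat H$. Since a primary Krull monoid whose decomposition $\widehat H^\times \times \N_0^{(I)}$ has $|I| \geq 2$ would contain two non-comparable primes (contradicting primariness), one concludes $\widehat H = \widehat H^\times \times \langle p \rangle$. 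By~\eqref{primary-closure}, $H' = \widetilde H$, and $\widehat{H'} = \widehat H$ follows from $H \subseteq H' \subseteq \widehat H$ and the complete integral closedness of $\widehat H$.

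The technical step of part~(2) is to establish $(H' \DP \widehat H) \neq \emptyset$. Using the DVM structure, one shows that for $n$ large enough, every element $\eta p^n \in \widehat H$ has a positive integer power inside $H$, so $\eta p^n \in \widetilde H = H'$; any such $p^n$ then serves as a conductor element. Once the conductor is non-empty, $H'$ is a submonoid of the Krull monoid $\widehat H$ with non-empty conductor, hence a Mori monoid; it is also primary (given non-units $a, b \in H'$, common powers $a^N, b^N \in H$ are non-units by~\eqref{primary-closure}, primariness of $H$ gives $b^{Nn} \in a^N H$, and $(b^n/a)^N \in H$ places $b^n/a \in \widetilde H = H'$), so $H'$ is a primary Mori monoid, hence strongly primary. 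Applying part~(1) to $H'$, whose conductor $(H' \DP \widehat{H'}) = (H' \DP \widehat H)$ is non-empty, then yields local tameness of $H'$. The main obstacle is clearly part~(1): turning the qualitative non-emptiness of $(H \DP \widehat H)$ into a quantitative bound on $\rho_k(H)$ without being able to assume that $\widehat H$ is Krull (or even Mori) as one could in part~(2); the finitely primary case in~\eqref{finitely-primary-monoids-are-locally-tame} relied on an explicit finite-rank embedding into a factorial overmonoid, and an analogous substitute must be extracted from the conductor alone.
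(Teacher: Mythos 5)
Your plan for part~(1) is not a proof: the entire difficulty of that statement lies exactly in the step you leave as ``this should produce a uniform bound on $\ell$ in terms of $k$ and $c$,'' and you yourself flag it as the main obstacle. The paper does not derive part~(1) from \eqref{enforcing-local-tameness} by an ad hoc estimate; it invokes \cite[Theorem~3.9]{Ge-Ro19b}, where the passage from $(H \DP \widehat H) \ne \emptyset$ to local tameness is the substantive result. Nothing in your sketch explains how multiplying by a power of a conductor element $c$, together with $\mathfrak m^{\mathcal M(u)} \subset uH$, controls $\rho_k(H)$ (or $\Lambda(H)$, or $\tau(H,u)$) when $\widehat H$ is not assumed Krull, Mori, or factorial; so part~(1) remains unproven in your write-up.

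In part~(2) the skeleton matches the paper ($\widehat{H'}=\widehat H$, $H'=\widetilde H$ primary, non-empty conductor, Mori, then apply part~(1)), and your argument for primariness of $H'$ is essentially the standard one and is fine. But two key steps are asserted rather than proved, and one would fail as written. First, ``$H'$ is a submonoid of the Krull monoid $\widehat H$ with non-empty conductor, hence a Mori monoid'' is not a valid general principle; non-emptiness of the conductor into a Krull overmonoid does not by itself give the ascending chain condition on divisorial ideals. The paper gets Mori-ness from \cite[Theorem~4.1]{Re13a}, which uses that $H'$ is (seminormal and) primary with $\widehat H$ Krull; you must either cite such a result or prove it, not deduce it from the conductor alone. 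Second, the crucial claim that every $\eta p^n \in \widehat H$ with $n$ large has a power in $H$ (equivalently $(H' \DP \widehat H) \ne \emptyset$) is exactly the content of \cite[Proposition~4.8]{G-HK-H-K03} for seminormal primary monoids and is left unproved; it does admit a short direct argument (for a non-unit $t \in \widetilde H$ and $x \in \widehat H$ choose $c \in H$ with $cx^k \in H$ for all $k$, use primariness of $\widetilde H$ to get $t^k \in c\widetilde H$, and conclude $(tx)^k \in \widetilde H$, hence $tx \in \widetilde H$ by root-closedness), but that argument, which needs neither the DVM structure nor Krullness, is not the one you give. Finally, your preliminary claim that $\widehat H$ is primary has a gap: from $(db)^n \in (da)H$ you get $b^n/a = h/d^{n-1}$ with $h \in H$, and complete integral closedness of $\widehat H$ does not place this element in $\widehat H$, since it only applies when a single $c$ works for all powers of the element in question.
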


\begin{proof}
	1.  This part follows from \cite[Theorem~3.9]{Ge-Ro19b}.
	
	\smallskip
	2. Suppose that $\widehat H$ is Krull. Since $\widehat H$ is completely integrally closed, it follows that $\widehat{H'} = \widehat H$. The monoid $H'$ is seminormal, and it is primary by \cite[Theorem~15.4]{HK98} (we use that $H' = \widetilde H$ by \eqref{primary-closure}). Since the  conductor of a seminormal primary monoid is non-empty by \cite[Proposition~4.8]{G-HK-H-K03}, we infer that $(H' \DP \widehat H)\ne \emptyset$. Since $H'$ is primary and $\widehat H$ is Krull, $H'$ is Mori by \cite[Theorem~4.1]{Re13a}. Thus, $H'$ is strongly primary with non-empty conductor and hence it is locally tame by part~1.
\end{proof}

If $H$ is strongly primary, then $\widehat {H'}$ is completely integrally closed, and so is $\widehat H$  provided that $(H \DP \widehat H) \ne \emptyset$. If $H$ is a primary Mori monoid with $(H \DP \widehat H) \ne \emptyset$, then $\widehat H$ is Krull. However, in general, the complete integral closure of a primary Mori monoid (note even of a primary Mori domain) may not be Mori (\cite[Example~9]{Lu00}) or completely integrally closed (for a survey see \cite[Section~7]{Ba00}). The assumptions made in Lemma~\ref{2.1} are most natural for strongly primary monoids stemming from ring theory, whose properties are however in stark contrast to the analogous properties of strongly primary Puiseux monoids (as we will see in Theorem~\ref{thm:structure-of-strongly-primary-and-its-closure}).

We end this subsection with a characterization of global tameness, and for this we need to introduce the $\omega (H, \cdot)$ invariants.  For an atomic monoid $H$ and an atom $u \in \mathcal A (H)$, let
$\omega (H,u)$ be the smallest $N \in \N_0 \cup \{\infty\}$ with the following property:
\begin{itemize}
      \item[] If $k \in \N$ and $a_1, \dots, a_k \in H$ (it is equivalent to assume that $a_1, \dots, a_k \in \mathcal A (H)$) with $u \mid a_1 \cdot \ldots \cdot a_k$, then there is a subset $\Omega \subset \llb 1,k \rrb$ such that
      \[
     		|\Omega| \le N \quad \text{and} \quad u \mid \prod_{\lambda \in \Omega} a_{\lambda} \,.
      \]
\end{itemize}
We set $\omega (H) = \sup \{ \omega (H,u) : u \in \mathcal A (H) \} \in \N \cup \{\infty\}$.

\smallskip
\begin{lemma} \label{lem:global-tameness}
	Let $H$ be a strongly primary monoid. Then the following statements are equivalent.
	\begin{enumerate}
		\item[(a)] $H$ is globally tame.
		\item[(b)] $\bigcap_{u \in \mathcal A (H)} u H \ne \emptyset$.
		\item[(c)] $\mathcal M (H) < \infty$.
	\end{enumerate}
\end{lemma}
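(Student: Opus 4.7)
The plan is to prove the equivalence by establishing $(c) \Leftrightarrow (b)$ directly and then closing the cycle with $(c) \Rightarrow (a)$ and $(a) \Rightarrow (b)$.

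For $(c) \Leftrightarrow (b)$, set $N := \mathcal M(H)$. If $N < \infty$, then $\mathfrak m^N \subseteq uH$ for every $u \in \mathcal A(H)$; since $\mathfrak m$ is non-empty, any element of $\mathfrak m^N$ lies in $\bigcap_u uH$. Conversely, any $a \in \bigcap_u uH$ is non-invertible (being a multiple of an atom), so $\mathcal M(a) < \infty$ by strong primarity, and $\mathfrak m^{\mathcal M(a)} \subseteq aH \subseteq uH$ forces $\mathcal M(u) \leq \mathcal M(a)$ uniformly in $u$, hence $\mathcal M(H) \leq \mathcal M(a) < \infty$.

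For $(c) \Rightarrow (a)$, let $N := \mathcal M(H)$. The central step is a partition argument upgrading the atomic bound $\mathcal M(u) \leq N$ to a bound on every element: for $b \in H \setminus H^\times$ with $b = v_1 \cdots v_k$ and $k = \min \mathsf L(b)$, any product $b_1 \cdots b_{Nk}$ of $Nk$ non-invertibles splits into $k$ consecutive blocks of length $N$; each block lies in $\mathfrak m^N \subseteq v_i H$, so the full product lies in $v_1 \cdots v_k H = bH$. This gives $\mathcal M(b) \leq N \cdot \min \mathsf L(b)$, and combining with $\max \mathsf L(b) \leq \mathcal M(b)$ from Lemma~\ref{lem:characterization-strongly-primary} yields the elasticity bound $\rho_k(H) \leq N k$ for every $k \in \N$. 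Now fix $u \in \mathcal A(H)$ and $v_1, \dots, v_m \in \mathcal A(H)$ with $u \mid v_1 \cdots v_m$ and no proper subproduct divisible by $u$. Since any $\mathcal M(u)$ of the $v_i$'s have product in $\mathfrak m^{\mathcal M(u)} \subseteq uH$, we obtain $m \leq \omega(H, u) \leq \mathcal M(u) \leq N$. Writing $y := u^{-1} v_1 \cdots v_m$, the super-additivity of $\max \mathsf L$ applied to $v_1 \cdots v_m = u y$ yields
\[
 1 + \min \mathsf L(y) \leq 1 + \max \mathsf L(y) \leq \max \mathsf L(v_1 \cdots v_m) \leq \rho_m(H) \leq N m \leq N^2.
\]
Hence $\mathsf t(H, u) \leq N^2$ uniformly in $u$, so $\mathsf t(H) < \infty$.

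For $(a) \Rightarrow (b)$, observe that $\omega(H, u) \leq \mathsf t(H, u)$: given $u \mid a_1 \cdots a_k$, a subset $\Omega$ of minimal cardinality with $u \mid \prod_{\lambda \in \Omega} a_\lambda$ yields a product in which $u$ divides the whole but no proper subproduct, so $|\Omega| \leq \mathsf t(H, u)$. Hence $W := \omega(H) \leq \mathsf t(H) < \infty$. Now fix any $u_0 \in \mathcal A(H)$: for every atom $v$, primarity provides $n \in \N$ with $v \mid u_0^n$; applying $\omega(H, v) \leq W$ to the factorization $u_0 \cdot \ldots \cdot u_0$ of length $n$ produces some $s \leq W$ with $v \mid u_0^s$, and hence $v \mid u_0^W$. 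Therefore $u_0^W \in \bigcap_{v \in \mathcal A(H)} vH$.

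The main obstacle is the partition argument in $(c) \Rightarrow (a)$: it is precisely the conversion of the pointwise information $\mathcal M(u) \leq N$ into the global elasticity bound $\rho_k(H) \leq N k$ that allows us to control $\min \mathsf L(u^{-1} v_1 \cdots v_m)$. Bounding the $\omega$-side of $\mathsf t(H, u)$ is straightforward, but the $\tau$-side requires this extra ingredient, which has no analogue in the definitions alone.
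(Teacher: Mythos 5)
Your proof is correct, but it takes a genuinely more self-contained route than the paper. The paper disposes of (a)\,$\Leftrightarrow$\,(b) by citing \cite[Theorem~3.8]{Ge-Ro19b}, and gets (c)\,$\Rightarrow$\,(a) from the single observation $\omega(H,u)\le \mathcal M(u)$ combined with the cited inequality $\mathsf t(H)\le \omega(H)^2$ of \cite[Proposition~3.5]{Ge-Ka10a}; only (b)\,$\Rightarrow$\,(c) is argued in-house, exactly as in your $(b)\Rightarrow(c)$ step. You instead reprove both ingredients from scratch: your block-partition argument $\mathcal M(b)\le \mathcal M(H)\cdot \min\mathsf L(b)$, together with $\max\mathsf L(b)\le \mathcal M(b)$ from Lemma~\ref{lem:characterization-strongly-primary}, yields the explicit bounds $\rho_k(H)\le \mathcal M(H)\,k$ and $\mathsf t(H)\le \mathcal M(H)^2$ (the same quantitative bound the paper gets via $\omega(H)^2$), and your $(a)\Rightarrow(b)$ argument, extracting $u_0^{\,\omega(H)}\in \bigcap_{v}vH$ from primariness plus $\omega(H,v)\le \mathsf t(H,v)$, is in effect a short direct proof of the relevant half of the cited theorem of Geroldinger--Roitman. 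What the paper's route buys is brevity; what yours buys is independence from the two external results and explicit constants, at the cost of invoking the characterization of $\mathsf t(H,u)$ via minimal products (which the paper does record in Section~2, so this is legitimate).

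One small point to patch: $\omega(H,u)\le \mathsf t(H,u)$ fails when $u$ is a prime, since then $\omega(H,u)=1$ while $\mathsf t(H,u)=0$; your minimality argument for $\Omega$ silently uses the non-prime characterization of $\mathsf t(H,u)$. This is harmless---either note that a strongly primary monoid containing a prime atom has $H_{\red}\cong(\N_0,+)$, where (b) is trivial, or simply work with $W=\max\{1,\mathsf t(H)\}$---but as written the inequality $\omega(H)\le \mathsf t(H)$ should be adjusted.
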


\begin{proof}
Parts (a) and (b) are equivalent by \cite[Theorem~3.8]{Ge-Ro19b}. Next we show that~(c) implies~(a). By definition, we obtain that $\omega (H,u) \le \mathcal M (u)$ for all $u \in \mathcal A(H)$, whence $\omega (H) \le \mathcal M (H)$. Since $\mathsf t (H) \le \omega (H)^2$ by \cite[Proposition 3.5]{Ge-Ka10a}, part~(a) holds.
To show that~(b) implies~(c) we choose an element $f$ in the given intersection. If $u \in \mathcal A(H)$, then $(H \setminus H^{\times})^{\mathcal M (f)} \subset fH \subset uH$. As a result, $\mathcal M(u) \le \mathcal M (f)$ for all $u \in \mathcal A(H)$, which implies that $\mathcal M(H) \le \mathcal M(f)$.
\end{proof}
\medskip

\section{Strongly primary Puiseux  monoids}
\label{sec:Puiseux monoids}

\smallskip
In this section we focus on the study of submonoids of $(\Q_{\ge 0}, +)$, here called {\it Puiseux monoids}. This gives rise to an entirely new class of strongly primary monoids with arithmetic properties that stand in stark contrast to those of primary domains (as outlined in Subsection~\ref{sec:background}.\ref{background-strongly-primary}). The main results in this section are Theorems~\ref{thm:structure-of-strongly-primary-and-its-closure},~\ref{thm:a SPPM has finite elasticity iff it is globally tame}, and \ref{thm:Puiseux-monoids-and-their-quotient-groups}. In particular, Theorem 3.3 characterizes numerical monoids as the only Puiseux monoids that are both finitely and strongly primary, and as the only Puiseux monoids that are both strongly primary and primary Mori with non-empty conductor.

\smallskip
\subsection{Algebraic closures and conductor of Puiseux monoids} \label{subsec:conductor} First, we introduce some useful notation. If $x \in \Q$, then the unique integers $a$ and $b$ such that $b \ge 1$, $x = a/b$, and $\gcd(a,b)=1$ are denoted by $\mathsf{n}(x)$ and $\mathsf{d}(x)$, and called the \textit{numerator} and the \textit{denominator} of $x$, respectively. For each $X \subset \Q$, we set $\mathsf{n}(X) = \{\mathsf{n}(x) : x \in X\}$ and $\mathsf{d}(X) = \{\mathsf{d}(x) : x \in X\}$. Moreover, we set $X^\bullet = X \setminus \{0\}$ and let $\langle X \rangle$ denote the submonoid of $(\Q,+)$ generated by $X$.

Let $H$ be a Puiseux monoid that is nontrivial (i.e., $H \neq \{0\}$). As $H \cap \N$ is non-empty, for $x,y \in H$ satisfying that $y \in x + H$ we write $x \mid_H y$ instead of $x \mid y$ to avoid conflicts with the standard notation of division in the multiplicative monoid $\N$.

Let $H$ be a nontrivial Puiseux monoid. For any $x,y \in H^\bullet$, it follows that $\mathsf{n}(x) \mathsf{d}(y)y = \mathsf{n}(y)\mathsf{d}(x) x \in x + H$. Hence $H$ is primary, and $H^{\bullet}$ is its maximal ideal.

\begin{proposition} \label{prop:closure-of-Puiseux-monoids}
	Let $H$ be a nontrivial Puiseux monoid, and set $n = \gcd  \mathsf n (H)$. Then the following statements hold.
	\begin{enumerate}
		\item \label{item:prop closure, equality of closures} $H' = \widetilde{H} = \widehat H = \mathsf q (H) \cap \Q_{\ge 0} = n \cdot  \langle 1/d : d \in \mathsf{d}(H^\bullet) \rangle$.
		\item \label{item 2:prop closure, equality of closures} $H$ is a valuation monoid if and only if $H$ is seminormal.
		\item \label{item 3:prop closure, the complete integral closure is a valuation monoid}$ \widehat{H}$ is a valuation monoid.
	\end{enumerate}
\end{proposition}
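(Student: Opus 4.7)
The plan is to prove Part 1 via the chain $H' = \widetilde H \subset \widehat H \subset \mathsf{q}(H) \cap \Q_{\ge 0} \subset n \cdot M \subset H'$, where $M := \langle 1/d : d \in \mathsf{d}(H^\bullet) \rangle$. The equality $H' = \widetilde H$ comes from \eqref{primary-closure} (since $H$ is primary, as noted just before the proposition). The inclusions $\widetilde H \subset \widehat H \subset \mathsf{q}(H)$ are \eqref{eq2.1}, and $\widehat H \subset \Q_{\ge 0}$ holds because if $x \in \widehat H$ with $x < 0$, then for any $c \in H$ we would have $c + kx < 0 \notin H$ for $k$ large, contradicting the definition of $\widehat H$. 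For the inclusion $n \cdot M \subset H' = \widetilde H$, since $\widetilde H$ is a submonoid of $\mathsf{q}(H)$ it suffices to show $n/d \in \widetilde H$ for each $d \in \mathsf{d}(H^\bullet)$. I will verify that $J := H \cap \N_0$ satisfies $\gcd J^\bullet = n$: the inclusion $\mathsf{n}(H^\bullet) \subset J^\bullet$ (via $\mathsf{d}(h) h = \mathsf{n}(h)$) gives $\gcd J^\bullet \mid n$, while every positive integer in $H$ is a multiple of $n$ gives the reverse, so $J/n$ is a numerical monoid. Hence $K n \in H$ for all sufficiently large $K$, and $(Kd)(n/d) = Kn \in H$ shows $n/d \in \widetilde H$.

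The key step is $\mathsf{q}(H) \cap \Q_{\ge 0} \subset n \cdot M$. Fix $x = p/q$ in $(\mathsf{q}(H) \cap \Q_{\ge 0})^\bullet$ with $\gcd(p,q) = 1$. Since $n \mid \mathsf{n}(h)$ together with $\gcd(\mathsf{n}(h), \mathsf{d}(h)) = 1$ forces $\gcd(n, \mathsf{d}(h)) = 1$, for every prime $r \mid n$ and every $h \in H^\bullet$ I get $v_r(\mathsf{d}(h)) = 0$ and hence $v_r(h) = v_r(\mathsf{n}(h)) \ge v_r(n)$. Applying the ultrametric inequality $v_r\bigl(\sum_i c_i h_i\bigr) \ge \min_i v_r(h_i)$ to any representation of $x$ as an integer combination of elements of $H$ gives $v_r(x) \ge v_r(n)$ for every $r \mid n$; combined with $\gcd(p,q) = 1$ this forces $n \mid p$ and $\gcd(n, q) = 1$. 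Writing $p = n p'$, the task reduces to $p'/q \in M$. For this I establish two short closure properties of $\mathsf{d}(H^\bullet)$: closure under divisors (since $\mathsf{d}(d''h) = d/d''$ when $d'' \mid d = \mathsf{d}(h)$), and closure under coprime products (a direct computation shows $\mathsf{d}(h_1 + h_2) = d_1 d_2$ when $\gcd(d_1, d_2) = 1$). For each prime $r \mid q$ with $e := v_r(q)$, coprimality of $p$ and $q$ gives $v_r(x) = -e$, and the ultrametric inequality produces an $h_i \in H^\bullet$ with $v_r(h_i) \le -e$, whence $r^e \mid \mathsf{d}(h_i)$ and so $r^e \in \mathsf{d}(H^\bullet)$ by divisor closure. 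Coprime-product closure then yields $q = \prod_r r^{v_r(q)} \in \mathsf{d}(H^\bullet)$, so $1/q \in M$ and $x = n \cdot p' \cdot (1/q) \in n \cdot M$.

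Once Part 1 is established, Parts 2 and 3 are quick consequences. For the forward direction of Part 2, the fact recalled in Section \ref{sec:background} that a primary valuation monoid is completely integrally closed gives $H = \widehat H$, whence $H = H'$ and $H$ is seminormal. Conversely, if $H$ is seminormal then $H = H' = \mathsf{q}(H) \cap \Q_{\ge 0}$, and for any $a, b \in H$ with $a \le b$ one has $b - a \in \mathsf{q}(H) \cap \Q_{\ge 0} = H$, i.e.\ $a \mid_H b$. Part 3 is the same comparison argument applied to $\widehat H = \mathsf{q}(H) \cap \Q_{\ge 0}$. The main obstacle throughout is the inclusion $\mathsf{q}(H) \cap \Q_{\ge 0} \subset n \cdot M$, whose crux is showing that $\mathsf{d}(x) \in \mathsf{d}(H^\bullet)$ for every nonzero $x \in \mathsf{q}(H) \cap \Q_{\ge 0}$; this is precisely what the two closure lemmas for $\mathsf{d}(H^\bullet)$, combined with the ultrametric valuation analysis, are designed to deliver.
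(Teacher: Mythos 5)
Your proposal is correct and follows essentially the same route as the paper: the cyclic chain $H' = \widetilde H \subset \widehat H \subset \mathsf q(H)\cap\Q_{\ge 0} \subset n\cdot\langle 1/d : d \in \mathsf d(H^\bullet)\rangle \subset H'$, with the key inclusion resting on closure properties of $\mathsf d(H^\bullet)$ (you use closure under divisors and coprime products together with $r$-adic valuations, where the paper uses closure under divisors and lcms, and your numerical-monoid argument for $n\cdot\langle 1/d\rangle \subset \widetilde H$ replaces the paper's direct multiplication of $\tfrac{m_i}{\mathsf d(q_i)}n$ by $\mathsf n(q_i)$), and your valuation analysis even makes explicit the divisibility $n \mid \mathsf n(z)$ that the paper leaves implicit. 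Parts 2 and 3 are handled as in the paper, except that in the forward direction of Part 2 you invoke the fact that primary valuation monoids are completely integrally closed instead of the simpler observation that valuation monoids are seminormal; both are valid.
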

\begin{proof}

1.  The first equality follows from~\eqref{primary-closure} and the fact that Puiseux monoids are primary. Clearly, we have $H' \subset \widehat H \subset \mathsf q (H) \cap \Q_{\ge 0}$. In order to show that $\mathsf q (H) \cap \Q_{\ge 0} \subset n \cdot  \langle 1/d : d \in \mathsf{d}(H^\bullet) \rangle$, we choose an element $z=y-x \in \mathsf q (H) \cap \Q_{\ge 0}$ with $x,y \in H$. Then $\mathsf d (z)$ divides $\lcm ( \mathsf d (x), \mathsf d (y) )$, and a straightforward calculation shows that $\mathsf{d}(H^\bullet)$ is closed under taking positive divisors and least common multiples. Hence $\mathsf d (z) \in \mathsf d (H^{\bullet})$ and
\[
	z = \frac{\mathsf n (z)}{\mathsf d (z)} \in n \cdot  \langle 1/d : d \in \mathsf{d}(H^\bullet) \rangle.
\]
To verify that $n \cdot  \langle 1/d : d \in \mathsf{d}(H^\bullet) \rangle \subset H'$, take $k \in \N$ and $q_1, \dots, q_k \in H^\bullet$, and then consider
\[
	x = n \left( \frac{m_1}{\mathsf{d}(q_1)} + \dots + \frac{m_k}{\mathsf{d}(q_k)} \right)  \in n \cdot \langle 1/d : d \in \mathsf{d}(H^\bullet) \rangle,
\]
where $m_1, \dots, m_k \in \N$. For each $i \in \llb 1,k \rrb$, the fact that $\mathsf{n}(q_i)\big(\frac{m_i}{\mathsf{d}(q_i)} n \big) = m_i n q_i \in H$ implies that $\frac{m_i}{\mathsf{d}(q_i)} n \in \widetilde H$. Thus, $x \in \widetilde H = H'$ by~\eqref{primary-closure}.

\smallskip
2. It is well known that every valuation monoid is seminormal. Conversely, suppose that $H$ is seminormal. Then it follows from the previous part that $H = \mathsf q (H) \cap \Q_{\ge 0} $. Thus, $y - x \in \mathsf q (H) \cap \Q_{\ge 0} = H$ for all $x, y \in H$ with $x \le y$. Hence $H$ is a valuation monoid.

\smallskip
3. This is an immediate consequence of parts~1 and~2.
\end{proof}

With this in hand, we can give an explicit description of the conductor of a Puiseux monoid. We assume that $0$ is the supremum of the empty set.

\begin{proposition} \label{prop:conductor-of-Puiseux-monoids}
	Let $H$ be a Puiseux monoid with conductor $\mathfrak f = (H \DP \widehat{H})$, and set $\sigma = \sup \, ( \widehat{H} \setminus H )$. Then
	\[
		\mathfrak f =
		\begin{cases}
			\emptyset & \text{if } \sigma = \infty \,, \\
			H_{\ge \sigma}  & \textit{if }\sigma < \infty \,.
		\end{cases}
	\]
	In particular, if $H = \widehat H$, then $\mathfrak f = H_{\ge 0} = H$.
\end{proposition}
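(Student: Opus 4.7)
The plan is to leverage Proposition~\ref{prop:closure-of-Puiseux-monoids}, which tells us $\widehat H$ is a valuation monoid. This translates into a concrete tool: for any $x, y \in \widehat H$ with $x \le y$, the difference $y - x$ again lies in $\widehat H$. I will use this to convert the defining condition $x + \widehat H \subset H$ of the conductor into a statement comparing $x$ with elements of $\widehat H \setminus H$. Note also that any element of $\mathfrak f$ must lie in $H$, since $0 \in \widehat H$ gives $x = x + 0 \in x + \widehat H \subset H$.

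For the case $\sigma = \infty$, I would argue by contradiction. Suppose $x \in \mathfrak f$. By the unboundedness of $\widehat H \setminus H$, choose $y \in \widehat H \setminus H$ with $y > x$. The valuation property yields $y - x \in \widehat H$, hence $y = x + (y-x) \in x + \widehat H \subset H$, contradicting $y \notin H$. Thus $\mathfrak f = \emptyset$.

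For the case $\sigma < \infty$, I would prove both inclusions. To see $H_{\ge \sigma} \subset \mathfrak f$, take $x \in H$ with $x \ge \sigma$ and $y \in \widehat H$, so $x + y \in \widehat H$ and $x + y \ge \sigma$. If $x + y > \sigma$ then $x + y \notin \widehat H \setminus H$ by the supremum property, so $x + y \in H$; and if $x + y = \sigma$ then $y = 0$ and $x = \sigma \in H$, forcing $x + y \in H$. Conversely, for $\mathfrak f \subset H_{\ge \sigma}$, let $x \in \mathfrak f$; then $x \in H$ as noted above, and if $x < \sigma$ we could choose $y \in \widehat H \setminus H$ with $y > x$ and repeat the valuation argument to force $y \in H$, a contradiction. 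Finally, the ``in particular'' clause is immediate from the convention $\sup \emptyset = 0$.

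The only mildly subtle point is the edge case $x + y = \sigma$ in the first inclusion: one has to rule out that $x+y$ itself is a maximal element of $\widehat H \setminus H$. Beyond this, the valuation-monoid description of $\widehat H$ furnished by Proposition~\ref{prop:closure-of-Puiseux-monoids} does all the real work, so I do not anticipate a genuine obstacle.
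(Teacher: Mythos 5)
Your proof is correct and follows essentially the same route as the paper: both use the description of $\widehat H$ from Proposition~\ref{prop:closure-of-Puiseux-monoids} to subtract elements within $\widehat H$, showing that no element below a member of $\widehat H \setminus H$ can lie in the conductor (which settles $\sigma = \infty$ and the inclusion $\mathfrak f \subset H_{\ge \sigma}$) and that every $x \in H_{\ge \sigma}$ translates $\widehat H$ into $H$. The only cosmetic difference is that the paper splits the boundary case by whether $\sigma \in H$, while you split by whether $x + y = \sigma$; the content is the same.
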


\begin{proof}
	Clearly, $H = \widehat H$ implies that $(H \DP \widehat H)= H = H_{\ge 0}$. Suppose $H \ne \widehat H$ and note that $(H : \widehat{H}) \subset {H}$. We distinguish two cases.

	\vskip 0.1cm
	\textsc{Case 1:} $\sigma = \infty$. Take $x \in \widehat{H}$. Since $\widehat{H} \setminus H$ is unbounded, there exists $x' \in \widehat{H} \setminus H$ such that $x' > x$. Then taking $y = x' - x \in \widehat{H}$, one can see that $x + y = x' \notin H$. As a result, $x \notin (H : \widehat{H})$. Thus, one obtains that $\mathfrak f = \emptyset$.
	\vskip 0.1cm
	\textsc{Case 2:} $\sigma < \infty$. Proceeding as we did in the previous paragraph, we can argue that $\widehat{H}_{< \sigma}$ and $(H : \widehat{H})$ are disjoint, which implies that $(H : \widehat{H}) \subset H_{\ge \sigma}$. To show the reverse inclusion, take $x \in H_{\ge \sigma}$. If $\sigma \notin H$, then $x > \sigma$ and so $x + \widehat{H} \subset (\widehat{H} + \widehat{H})_{> \sigma} \subset \widehat{H}_{> \sigma} = H_{> \sigma} \subset H$. So $x \in (H : \widehat{H})$ provided that $\sigma \notin H$. If $\sigma \in H$, then $\widehat{H}_{\ge \sigma} = H_{\ge \sigma}$ and so $x + \widehat{H} \subset (\widehat{H} + \widehat{H})_{\ge \sigma} \subset \widehat{H}_{\ge \sigma} \subset H$. Therefore $x \in (H : \widehat{H})$ also when $\sigma \in H$. Hence $H_{\ge \sigma} \subset (H : \widehat{H})$, yielding $\mathfrak f = H_{\ge \sigma}$.
\end{proof}

\smallskip
\subsection{Strongly primary Puiseux monoids} \label{subsec:strongly-primary} We start with a result demonstrating how exceptional strongly primary Puiseux monoids are inside the class of all strongly primary  monoids. 

\begin{theorem} \label{thm:structure-of-strongly-primary-and-its-closure}
	Let $H$ be a strongly primary Puiseux monoid.
	\begin{enumerate}
		\item The following statements are equivalent.
		\begin{enumerate}
			\item[(a)] $H$ is finitely generated.
			\item[(b)] $H$ is isomorphic to a numerical monoid.
			\item[(c)] $H$ is Mori with $(H \DP \widehat H) \ne \emptyset$.
			\item[(d)] $H$ is finitely primary.
			\item[(e)] $\widehat H$ is Krull.
			\item[(f)] $\widehat H$ is atomic.
			\item[(g)] $\widehat H$ is strongly primary.
			\item[(h)] $\widehat H \cong (\N_0, +)$.
		\end{enumerate}
		\smallskip
		\item \label{item:strongly primary necessary condition} The set $A_p = \{u \in \mathcal A(H): p \mid \mathsf d(u)\}$ is finite and non-empty only for finitely many $p \in \mathbb P$. In particular, $H$ is isomorphic to a Puiseux monoid $H^\ast$ with the additional property that, for every $p \in \mathbb P$, the set $A_p^\ast = \{u \in \mathcal A(H^\ast): p \mid \mathsf d(u)\}$ is either empty or infinite.
	\end{enumerate}
\end{theorem}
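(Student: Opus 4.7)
For Part~1 I would run a cycle of implications centered on \textup{(b)}. The forward direction \textup{(b)} $\Rightarrow$ (every other condition) is routine, since numerical monoids are finitely generated, finitely primary of rank $1$, Mori with non-empty conductor, and satisfy $\widehat H \cong (\N_0,+)$---a monoid that is itself Krull, atomic and strongly primary. The converse side leans on Proposition~\ref{prop:closure-of-Puiseux-monoids}, which identifies $\widehat H$ with $n\cdot \langle 1/d : d \in \mathsf d(H^\bullet)\rangle$ and shows that $\mathsf d(H^\bullet)$ is closed under positive divisors and least common multiples. The immediate implications \textup{(g)} $\Rightarrow$ \textup{(f)} (strongly primary implies BF, hence atomic) and \textup{(e)} $\Rightarrow$ \textup{(f)} (Krull non-groups are atomic) come for free, and the crux \textup{(f)} $\Rightarrow$ \textup{(h)} follows by contrapositive: if $\mathsf d(H^\bullet)$ were unbounded, lcm closure produces an infinite chain $d_1 \mid d_2 \mid d_3 \mid \cdots$ in $\mathsf d(H^\bullet)$, and any $x = nm/d_i \in \widehat H$ splits as $(d_{i+1}/d_i)$ copies of $nm/d_{i+1} \in \widehat H$, contradicting atomicity; hence $\mathsf d(H^\bullet)$ is bounded, so $\widehat H$ is cyclic, i.e.\ $\cong (\N_0,+)$.

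The remaining return implications are: \textup{(d)} $\Rightarrow$ \textup{(h)} via the $\Q$-rank argument that the free basis $\{p_1,\ldots,p_s\}$ of $\widehat H = F$ must be $\Q$-linearly independent inside $\Q_{\ge 0}$, which forces $s = 1$ and $F^\times = \{0\}$; \textup{(c)} $\Rightarrow$ \textup{(h)} by combining $(H:\widehat H)\ne\emptyset \Rightarrow \widehat H$ completely integrally closed with the standard transfer of the Mori property across a non-empty conductor, so that $\widehat H$ becomes a Mori, completely integrally closed valuation monoid (cf.\ Proposition~\ref{prop:closure-of-Puiseux-monoids}(3)), hence a discrete valuation monoid, hence cyclic; \textup{(a)} $\Rightarrow$ \textup{(b)} by clearing denominators of a finite generating set and then quotienting by their greatest common divisor; and \textup{(h)} $\Rightarrow$ \textup{(b)} by rescaling so that $\widehat H = \N_0$, realizing $H$ as a submonoid of $\N_0$ whose group of differences is all of $\Z$, and invoking Sylvester--Frobenius (two coprime elements generate a cofinite subset, so $H$ is numerical).

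For Part~2 I would argue by contradiction: assume $S := \{p \in \mathbb P : 1 \le |A_p| < \infty\}$ is infinite. Fix $u_0 \in \mathcal A(H)$ with $M = \mathcal M(u_0) < \infty$, and for each $p \in S$ coprime to $M \cdot \mathsf n(u_0) \cdot \mathsf d(u_0)$ pick $v \in A_p$ realizing $k_p := \max\{\mathsf v_p(\mathsf d(w)) : w \in A_p\}$. Strong primariness places $h_p := Mv - u_0$ in $H$; the $p$-adic computation $\mathsf v_p(h_p) = \min(\mathsf v_p(Mv), \mathsf v_p(u_0)) = -k_p$ then forces every atomic factorization of $h_p$ to use an atom from the finite set $A_p^{\max} := \{w \in A_p : \mathsf v_p(\mathsf d(w)) = k_p\}$, and in particular $v \ge u_0/M$. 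The plan is then to iterate this extraction (with careful bookkeeping for potential cancellations modulo $p$) and exploit the infinitude of $S$ to produce an element of $H$ whose forced decomposition is incompatible with the simultaneous finiteness of $A_p^{\max}$ over many primes $p \in S$, yielding the desired contradiction. Once $|S| < \infty$, the ``in particular'' clause follows by rescaling: setting $N := \prod_{p \in S} p^{k_p}$, the map $h \mapsto Nh$ is an isomorphism $H \to H^\ast := N\cdot H$, and for each $p \in S$ the factor $p^{k_p}$ cancels the maximal $p$-power in every $\mathsf d(u)$ with $u \in A_p$, giving $A_p^\ast = \emptyset$; for $p \notin S$ the $p$-adic valuations of denominators are preserved, so $A_p^\ast$ inherits emptiness or infiniteness from $A_p$.

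The main obstacle is closing the contradiction in Part~2: the $p$-adic argument yields clean lower bounds but not an immediate incompatibility, and extracting one requires tracking how the denominators of iterated remainders $h_p, h_p - w, h_p - w - w', \ldots$ evolve across different primes $p \in S$. I expect the final contradiction will combine the universal bound $\inf H^\bullet \ge u_0/M > 0$ (which any strongly primary monoid satisfies) with a pigeonhole-style argument on the finite sets $A_p^{\max}$ as $p$ varies in the infinite set $S$.
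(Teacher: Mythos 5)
Your Part~1 is essentially correct. The forward implications from (b) are indeed routine, and your return routes --- (f) $\Rightarrow$ (h) by the lcm--divisor-chain splitting argument inside $\widehat H = n \cdot \langle 1/d : d \in \mathsf d(H^\bullet)\rangle$ from Proposition~\ref{prop:closure-of-Puiseux-monoids}, (d) $\Rightarrow$ (h) by the rank of the quotient group, (c) $\Rightarrow$ (h) through Krullness of $\widehat H$, (a) $\Rightarrow$ (b) by clearing denominators, and (h) $\Rightarrow$ (b) by rescaling so that $\widehat H = \N_0$ and using that a submonoid of $\N_0$ whose elements have gcd $1$ is cofinite --- all work, and are somewhat more self-contained than the paper's proof, which instead quotes that atomic valuation monoids are discrete valuation monoids for (f) $\Rightarrow$ (h) and that $H$ is finitely generated whenever $\widetilde H$ is for (h) $\Rightarrow$ (a).

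Part~2, however, has a genuine gap: you do not derive a contradiction, you only describe a plan, and the single-prime extraction you propose does not lead to one. For a fixed $p \in S$ and $v \in A_p$, the conclusion that every factorization of $Mv - u_0$ must contain an atom of $A_p^{\max}$ is perfectly compatible with $1 \le |A_p| < \infty$ holding for infinitely many $p$; there is no tension yet, and a pigeonhole over the sets $A_p^{\max}$ cannot close it, since these sets are attached to different primes and need not interact. The bound $\inf H^\bullet \ge u_0/M$ is likewise inert here (it holds in every strongly primary Puiseux monoid, finitely many $A_p$ or not). The paper's argument is different in exactly the place you flag as the obstacle: it works with many primes \emph{simultaneously}. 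First it recursively selects an infinite subfamily $p_{\kappa_1}, p_{\kappa_2}, \dots$ of $\mathcal P = \{p : 1 \le |A_p| < \infty\}$ that is well separated, meaning no prime of $\mathcal P$ divides the denominators of atoms coming from two different sets $A_{p_{\kappa_i}}$, $A_{p_{\kappa_j}}$ (this is precisely the ``bookkeeping for cancellations'' you defer, and it needs the recursive construction with the auxiliary products $t_p$ and $a_n$). Then it fixes $q \in H \cap \N$ and $m$ with $mH^\bullet \subset q + H$, sets $u_i = \min A_{p_{\kappa_i}}$, and writes $u_1 + \dots + u_m = q + \sum_j \alpha_j v_j$ with $v_j \in \mathcal A(H)$. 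For each $i$, the $p_{\kappa_i}$-adic valuation forces some $v_{r_i}$ with $p_{\kappa_i} \mid \mathsf d(v_{r_i})$, so $v_{r_i} \in A_{p_{\kappa_i}}$ and $v_{r_i} \ge u_i$ by minimality of $u_i$; well-separation forces $r_1, \dots, r_m$ to be pairwise distinct; summing yields $u_1 + \dots + u_m \ge q + u_1 + \dots + u_m$, the contradiction. This combination --- minimal atoms, $m$ distinct primes at once, and distinctness of the forced atoms --- is the missing idea in your proposal. Your rescaling for the ``in particular'' clause, with $N = \prod_{p \in S} p^{k_p}$, is fine once the finiteness of $S$ has actually been established.
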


\begin{proof}
	1.	(a) $\Leftrightarrow$ (b) It follows from \cite[Proposition~3.2]{Go17a}.
		
		(b) $\Rightarrow$ (c) Finitely generated monoids are Mori with non-empty conductor by \cite[Theorem~2.7.13]{Ge-HK06a}.
		
		(c) $\Rightarrow$ (e) $\Rightarrow$ (f) The complete integral closure of a Mori monoid with non-empty conductor is Krull by \cite[Theorem~2.3.5]{Ge-HK06a}, and Krull monoids are atomic.
		
		(f) $\Rightarrow$ (h) Atomic valuation monoids are discrete valuation monoids by \cite[Theorem~16.4]{HK98}, whence $\widehat H \cong (\N_0, +)$.
		
		(h) $\Rightarrow$ (a) Since $\widehat H = H' = \widetilde H$ by Proposition~\ref{prop:closure-of-Puiseux-monoids}.\ref{item:prop closure, equality of closures}, it follows that $\widetilde H$ is finitely generated. Thus,~$H$ is finitely generated by \cite[Proposition~6.1]{Ge-HK94}.
		
		(h) $\Rightarrow$ (g) $\Rightarrow$ (f) Both implications are obvious.
	
		(b) $\Rightarrow$ (d) $\Rightarrow$ (e) Clearly, every numerical monoid is finitely primary. The complete integral closure of a finitely primary monoid is factorial and hence Krull (\cite[Theorem~2.9.2]{Ge-HK06a}).
	
	\smallskip
	2. Assume to the contrary that the set $\mathcal P = \{p \in \mathbb P : 1 \le |A_p| < \infty\}$ is infinite. Accordingly, let $(p_k)_{k \ge 1}$ be the unique enumeration of $\mathcal P$ with $p_1 < p_2 < \cdots,$ and for every $p \in \mathcal P$  set $t_p = \prod_{u \in A_p} \mathsf d(u)$. We recursively construct a strictly increasing sequence $(\kappa_n)_{n \ge 1}$ of positive integers, as follows. We start with $\kappa_1 = 1$. Then, for each $n \in \mathbb N$, we let
	\[
		\mathcal P_n = \{p \in \mathcal P: p \mid t_{p_1} \cdot \ldots \cdot t_{p_{\kappa_n}}\},
	\]
	and we let $\kappa_{n+1}$ be the smallest integer $k > \kappa_n$ such that $p_k$ does not divide $a_n = \prod_{p \in \mathcal P_n} t_p$.
	
	Now, take $m, n \in \mathbb N$ with $m < n$. We claim that
	\begin{equation}\label{equ:well-separated(0)}
		\{p \in \mathcal P: p \mid \gcd(\mathsf d(u), \mathsf d(v))\} = \emptyset
	\end{equation}
	for every $u \in A_{p_{\kappa_m}}$ and $v \in A_{p_{\kappa_n}}$\!. If not, there exists $p \in \mathcal P$ such that $p \mid \mathsf d(u)$ and $p \mid \mathsf d(v)$ for some $u \in A_{p_{\kappa_m}}$ and $v \in A_{p_{\kappa_n}}$. Then $v \in A_p$, and this in turn implies that $p_{\kappa_n} \mid t_p$, because $p_{\kappa_n} \mid \mathsf d(v)$. On the other hand, $p \in \mathcal P_{n-1}$, since $p \mid t_{\kappa_m}$ and $\kappa_m \le \kappa_{n-1}$ (by the fact that $m < n$ and the sequence $(\kappa_n)_{n \ge 1}$ is strictly increasing). As a result, we conclude that $p_{\kappa_n} \mid a_{n-1}$, contradicting that $p_{\kappa_n}$ is not a prime divisor of $a_{n-1}$ by construction.
	
	Building on these premises, we choose some  $q \in H \cap \mathbb N$ and set $u_n = \min A_{p_{\kappa_n}}$ for each $n \in \mathbb N$. Since~$H$ is strongly primary, there exists $m \in \N$ such that $m H^{\bullet} \subset q + H$. In particular,
	\[
		u_1 + \dots + u_m \in q + H.
	\]
	Because $H$ is atomic, there exist $s \in \mathbb N$, $v_1, \dots, v_s \in \mathcal A(H)$, and $\alpha_1, \dots, \alpha_s \in \mathbb N_0$ with
	\begin{equation}
		\label{equ:strongly-primary-condition(0)}
		\sum_{i=1}^m u_i = q + \sum_{i=1}^s \alpha_i v_i.
	\end{equation}
	From \eqref{equ:well-separated(0)}, there is no prime $p \in \mathcal P$ dividing both $\mathsf d(u_i)$ and $\mathsf d(u_j)$ for some $i, j \in \mathbb N$ with $i \ne j$. Therefore we see that, for every $i \in \llb 1, m \rrb$, there is an index $r_i \in \llb 1, s \rrb$ such that $\alpha_{r_i} \ne 0$ and $p_{k_i} \mid \mathsf d(v_{r_i})$; otherwise the $p_{k_i}$-valuation of the left-hand side of \eqref{equ:strongly-primary-condition(0)} would be different from the one of the right-hand side.
	
	On the other hand, it also follows from \eqref{equ:well-separated(0)} that there is no atom $v \in \mathcal A(H)$ such that $p_{k_i} \mid \mathsf d(v)$ and $p_{k_j} \mid \mathsf d(v)$ for some $i,j \in \mathbb N$ with $i \ne j$. Consequently, it is clear that $r_i \ne r_j$ for all $i, j \in \llb 1, m \rrb$ with $i \ne j$.
	
	Then using that $u_n = \min A_{p_{\kappa_n}}$ for every $n \in \mathbb N$, we obtain
	\[
		\sum_{i=1}^m u_i \ge q + \sum_{i=1}^m \alpha_{r_i} v_{r_i} \ge q + \sum_{i=1}^m v_{r_i} > \sum_{i=1}^m v_{r_i} \ge \sum_{i=1}^m u_i \,,
	\]
	a contradiction.
	Thus, the set $\mathcal P = \{p \in \mathbb P: 1 \le |A_p| < \infty\}$ is finite. Therefore $q = \prod_{p \in \mathcal P} \prod_{u \in A_p} \mathsf d(u) \in \N$, and so~$H$ is isomorphic to the Puiseux monoid $H^\ast = q \cdot H$, which has the required property.
\end{proof}

We proceed to offer various characterizations of strongly primary Puiseux monoids with non-empty conductor.

\begin{theorem} \label{thm:strongly primary characterization when conductor non-empty}
	Let $H$ be a nontrivial Puiseux monoid. Each of the first three statements below implies its successor. Moreover, if $(H : \widehat{H}) \neq \emptyset$, then all four statements are equivalent.
	\begin{enumerate}
		\item[(a)] $H$ is strongly primary.
		\item[(b)] $0$ is not a limit point of $H^\bullet$\!.
		\item[(c)] $H$ is a \BF-monoid.
		\item[(d)] $H$ satisfies the \text{\rm ACCP} (ascending chain condition on principal ideals).
	\end{enumerate}
\end{theorem}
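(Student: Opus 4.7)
The plan is to establish (a) $\Rightarrow$ (b) $\Rightarrow$ (c) $\Rightarrow$ (d) with no extra hypothesis, and then, assuming $(H \DP \widehat H) \ne \emptyset$, to close the loop via (d) $\Rightarrow$ (b) $\Rightarrow$ (a).

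The unconditional part is immediate. If $H$ is strongly primary and $a \in H^\bullet$, then with $n := \mathcal M(a)$ we have $nx \in a + H$ for every $x \in H^\bullet$, so $x \ge a/n > 0$; this gives (b). If $H^\bullet \subset [\varepsilon, \infty)$, then any expression of $a \in H$ as a sum of elements of $H^\bullet$ has at most $a/\varepsilon$ summands, so a maximal such expression is a factorization into atoms and $\max \mathsf L(a) \le a/\varepsilon$, yielding (c). The implication (c) $\Rightarrow$ (d) is the standard argument that BF implies ACCP: from a strictly ascending chain $a_1 + H \subsetneq a_2 + H \subsetneq \cdots$ one writes $a_n = a_{n+1} + b_n$ with $b_n \in H^\bullet$, and the identity $a_1 = a_{n+1} + b_1 + \cdots + b_n$ forces $\mathsf L(a_1)$ to be unbounded.

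For the closing implications, assume $(H \DP \widehat H) \ne \emptyset$ and set $\sigma := \sup(\widehat H \setminus H)$, which is finite by Proposition~\ref{prop:conductor-of-Puiseux-monoids}; by the definition of $\sigma$ we have $\widehat H_{> \sigma} \subset H$. For (d) $\Rightarrow$ (b), suppose toward a contradiction that $0$ is a limit point of $H^\bullet$, pick any $a \in H$ with $a > \sigma$ (possible since $H$ is unbounded), and choose a sequence $(x_n)_{n \ge 1}$ in $H^\bullet$ with $\sum_{n \ge 1} x_n < a - \sigma$. Setting $a_n := a - (x_1 + \cdots + x_n)$, each $a_n$ lies in $\mathsf q(H)$ and satisfies $a_n > \sigma$, so $a_n \in \widehat H_{> \sigma} \subset H$ by Proposition~\ref{prop:closure-of-Puiseux-monoids}.\ref{item:prop closure, equality of closures}; the identities $a_n = a_{n+1} + x_{n+1}$ then produce a strictly ascending chain of principal ideals, contradicting the ACCP. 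For (b) $\Rightarrow$ (a), if $H^\bullet \subset [\varepsilon, \infty)$ and $a \in H^\bullet$, pick any integer $n > (a + \sigma)/\varepsilon$; for arbitrary $x_1, \dots, x_n \in H^\bullet$ the sum $s := x_1 + \cdots + x_n$ exceeds $a + \sigma$, so $s - a \in \widehat H_{> \sigma} \subset H$, showing $(H^\bullet)^n \subset a + H$.

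No real obstacle is anticipated; the key point is that the conductor hypothesis, through Proposition~\ref{prop:conductor-of-Puiseux-monoids}, provides a threshold $\sigma$ above which $\widehat H$ and $H$ coincide, so that in the reverse implications positive rational arithmetic in $\mathsf q(H)$ automatically lands back in $H$.
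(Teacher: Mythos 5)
Your proof is correct and follows essentially the same route as the paper: the unconditional chain (a) $\Rightarrow$ (b) $\Rightarrow$ (c) $\Rightarrow$ (d) by the standard arguments (the paper cites references for (b) $\Rightarrow$ (c) and (c) $\Rightarrow$ (d), which you inline), and the closing implications under $(H \DP \widehat H) \ne \emptyset$ via Propositions~\ref{prop:closure-of-Puiseux-monoids} and~\ref{prop:conductor-of-Puiseux-monoids}, using the threshold $\sigma$ above which $\widehat H$ and $H$ agree. Your (d) $\Rightarrow$ (b) is in fact a touch cleaner than the paper's, which first invokes ACCP~$\Rightarrow$~atomicity to pick small atoms, whereas arbitrary small elements of $H^\bullet$ suffice, exactly as you use them.
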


\begin{proof}
	(a) $\Rightarrow (b)$ Take $x \in H^{\bullet}$. Since $H$ is strongly primary, there exists $m \in \N$ such that $m H^{\bullet} \subset x + H$. This implies that $\inf H^\bullet \ge x/m > 0$, and hence $0$ is not a limit point of $H^\bullet$.
	
	(b) $\Rightarrow$ (c) It follows from \cite[Proposition 4.5]{Go19b}.
	
	(c) $\Rightarrow$ (d) It follows from \cite[Corollary 1.3.3]{Ge-HK06a}.
	
	For the rest of the proof, suppose that $(H : \widehat{H}) \neq \emptyset$.
	
	(d) $\Rightarrow$ (b) Suppose, by way of contradiction, that $0$ is a limit point of $H^\bullet$. Since $H$ satisfies the ACCP, it is atomic. Then there exists a sequence of atoms $(u_i)_{i \ge 1}$ such that $u_i < 1/2^i$ for every $i \in \N$. Then the series $\sum_{n=1}^\infty u_n$ converges to a limit $ \ell \in ]0,1[$ and, as a consequence, $s_i := \sum_{n=1}^i u_n \in H \cap \, ]0,1[$ for every $i \in \N$. Since $(H : \widehat{H}) \neq \emptyset$, Proposition~\ref{prop:conductor-of-Puiseux-monoids} guarantees that $\widehat{H} \setminus H$ is bounded. Take $x \in H$ such that $x > 1 + \sup \widehat{H} \setminus H$. It follows from Proposition~\ref{prop:closure-of-Puiseux-monoids}.\ref{item:prop closure, equality of closures} that $x - s_i \in \widehat{H}$ for every $i \in \N$. As $(H : \widehat{H}) \neq \emptyset$ and $x - s_i > \sup \widehat{H} \setminus H$ for each $i \in \N$, Proposition~\ref{prop:conductor-of-Puiseux-monoids} ensures that $x - s_i \in H$ for each $i \in \N$. Consider the sequence of principal ideals $(x - s_i + H )_{i \ge 1}$ of~$H$. Since $x - s_i = x - s_{i+1} + (s_{i+1} - s_i) = (x - s_{i+1}) + u_{i+1}$ for every $i \in \N$, the sequence $( x - s_i + H )_{i \ge 1}$ is an ascending chain of principal ideals. Because each $u_i$ is positive, the chain of ideals $( x - s_i + H )_{i \ge 1}$ does not stabilize. However, this contradicts that $H$ satisfies the ACCP.
	
	(b) $\Rightarrow (a)$ Take $\varepsilon \in \,\, ]0, \inf H^\bullet[$. As $(H : \widehat{H}) \neq \emptyset$, Proposition~\ref{prop:conductor-of-Puiseux-monoids} implies that $\sigma = \sup \widehat H \setminus H < \infty$. Take $a \in H^{\bullet}$, and set $n = \lceil \varepsilon^{-1}(a + \sigma) \rceil$. We claim that the $n$-fold sumset $n H^{\bullet}$ is contained in $a +H$. Indeed, if $x_1, \dots, x_n \in H^{\bullet}$, then $x = x_1 + \dots + x_n > n \varepsilon \ge a + \sigma$. Now Proposition~\ref{prop:closure-of-Puiseux-monoids}.\ref{item:prop closure, equality of closures} guarantees that $x-a \in \mathsf q (H) \cap \Q_{> \sigma} = \widehat H_{> \sigma} = H_{> \sigma}$. As a result, $x \in a + H_{> \sigma} \subset a + H$.
\end{proof}

Without the non-empty-conductor condition, none of the last three statements in Theorem~\ref{thm:strongly primary characterization when conductor non-empty} implies its predecessor, as the following example indicates.

\begin{example} \hfill \label{ex:when empty conductor SP/dense/BF/ACCP are not equivalent for PMs}
	
	(d) $\not \Rightarrow$ (c) The monoid $H = \langle 1/p : p \in \P \rangle$ satisfies the ACCP by \cite[Theorem~5.2]{Go19c}, and it is\ not hard to verify that $\mathcal{A}(H) = \{ 1/p : p \in \P \}$. However, $H$ is not a BF-monoid because $p \in \mathsf{L}(1)$ for every $p \in \P$.
	
	\smallskip
	(c) $\not \Rightarrow$ (b) Let $(p_i)_{i \ge 1}$ and $(q_i)_{i \ge 1}$ be two strictly increasing sequences consisting of primes such that $q_i > p_i^2$ for every $i \in \N$. Set $H = \langle p_i/q_i : i \in \N \rangle$. It follows from~\cite[Corollary~5.6]{Go-Go18} that $H$ is atomic, and one can easily check that $\mathcal{A}(H) = \{p_i/q_i : i \in \N\}$. To argue that $H$ is a BF-monoid, take $x \in H^\bullet$ and note that since both sequences $(p_i)_{i \ge 1}$ and $(q_i)_{i \ge 1}$ are strictly increasing, there exists $N \in \N$ such that $q_i \nmid \mathsf{d}(x)$ and $p_i > x$ for every $i \ge N$. As a result, if $z \in \mathsf{Z}(x)$, then none of the atoms in $\{p_i/q_i : i \ge N\}$ can appear in $z$. Therefore $|\mathsf{L}(x)| \le |\mathsf{Z}(x)| < \infty$. Hence $H$ is a BF-monoid. However, $q_i > p_i^2$ for every $i \in \N$ implies that $0$ is a limit point of $H^\bullet$.
	
	\smallskip
	(b) $\not \Rightarrow$ (a) For $r \in \Q_{> 1} \setminus \N$, consider the Puiseux monoid $H = \langle r^i : i \in \N_0 \rangle$. It follows from \cite[Theorem~5.6]{Go19b} that $H$ is an FF-monoid and, therefore, atomic. In addition, it follows from \cite[Theorem~6.2]{Go-Go18} that $\mathcal{A}(H) = \{r^i : i \in \N_0\}$. Suppose, by way of contradiction, that $H$ is strongly primary. Then there exists $n \in \N$ such that $n H^\bullet \subset 1 + H$. Consider the element $x = r + r^2 + \dots + r^n \in n H^\bullet$. Now \cite[Lemma~3.2]{Ch-Go-Go20} guarantees that $|\mathsf{Z}(x)| = 1$. Thus, $1 \nmid_H x$, which contradicts that $n H^\bullet \subset 1 + H$. Hence~$H$ is not strongly primary even though~$0$ is not a limit point of $H^\bullet$.
\end{example}
\smallskip

\begin{remark}
	In \cite{fG18}, the second author defines a monoid $H$ to be ``strongly primary" if it is a reduced primary BF-monoid for which there exist $n \in \N$ and a finite subset $S$ of $H^\bullet$ with $\{nh : h \in H^\bullet\} \subset S + H$. This definition is not equivalent to the definition of strongly primary monoid used in this paper, which is the most standard in the literature. The definition we use here is indeed stronger than that one used in \cite{fG18}. For instance, we have just verified in the last paragraph of Example~\ref{ex:when empty conductor SP/dense/BF/ACCP are not equivalent for PMs} that for every $r \in \Q_{>1} \setminus \N$ the Puiseux monoid $H = \langle r^i : i \in \N_0 \rangle$ is not strongly primary, even though $H$ is strongly primary with respect to the definition used in~\cite{fG18} (see \cite[Proposition~5.6]{fG18}).
\end{remark}

Although being a BF-monoid and satisfying the ACCP are equivalent conditions for a Puiseux monoid with non-empty conductor, there are BF-monoids in this class that are not FF-monoids as well as atomic monoids with non-empty conductors which do not belong to this class. The next example illustrates this observation.

\begin{example} \label{ex:two tests for characterization theorem of SPPM with nonempty conductor}
	First, consider the Puiseux monoid $H = \{0\} \cup \Q_{> 1}$. It follows from Proposition~\ref{prop:conductor-of-Puiseux-monoids} that $(H : \widehat{H}) \neq \emptyset$. In addition, Theorem~\ref{thm:strongly primary characterization when conductor non-empty} guarantees that $H$ is a BF-monoid. Note that $\mathcal{A}(H) = H \cap \, ]1,2]$. Now observe that for all $x \in H \cap \, ]2,3]$ the expression $(1 + 1/n) + (x - 1 - 1/n)$ is a length-$2$ factorization in $\mathsf{Z}(x)$ for every integer $n > \frac{1}{x-2}$. This implies that $|\mathsf{Z}(x)| = \infty$ for all $x \in H_{> 2}$ and, therefore, $H$ is not an FF-monoid.
	
	\smallskip
	Now consider the Puiseux monoid $H =  \langle 1/p : p \in \P \rangle \cup \Q_{\ge 1}$. Since the monoid $\langle 1/p : p \in \P \rangle$ is atomic by \cite[Theorem~5.5]{Go-Go18}, it is not hard to check that $H$ is also atomic. It follows from Proposition~\ref{prop:conductor-of-Puiseux-monoids} that $(H : \widehat{H}) \neq \emptyset$. Because~$0$ is a limit point of~$H^\bullet$, Theorem~\ref{thm:strongly primary characterization when conductor non-empty} ensures that $H$ is neither strongly primary nor a monoid satisfying the ACCP.
\end{example}

\smallskip
\subsection{Characterization of global tameness for strongly primary Puiseux monoids} Before proceeding to characterize  strongly primary Puiseux monoids that are globally tame, let us collect the following lemma.

\begin{lemma} \label{lem:elasticity-of-Puiseux-monoids}
	Let $H$ be a strongly primary Puiseux monoid. Then $\rho (H) = \sup \mathcal A (H)/ \inf \mathcal A (H)$. In particular, $H$ has accepted elasticity if and only if the supremum and the infimum of $\mathcal A (H)$ are attained, and $\rho (H)=1$ if and only if $H \cong (\N_0, +)$.
\end{lemma}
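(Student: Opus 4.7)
The plan is to establish the equality $\rho(H) = \sup \mathcal A(H)/\inf \mathcal A(H)$ by a two-sided bound, and then derive the two ``in particular'' statements as corollaries.

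For the upper bound, I would use that strongly primary monoids are \BF-monoids by Lemma~\ref{lem:characterization-strongly-primary}, hence atomic. Given any $a \in H^\bullet$ and any factorization $z \in \mathsf Z(a)$ writing $a$ as the sum of $k = |z|$ atoms, each summand lies in $[\inf \mathcal A(H), \sup \mathcal A(H)]$, so
\[
    k\,\inf \mathcal A(H) \;\le\; a \;\le\; k\,\sup \mathcal A(H).
\]
Dividing, every $k \in \mathsf L(a)$ satisfies $a/\sup\mathcal A(H) \le k \le a/\inf\mathcal A(H)$, so $\rho(\mathsf L(a)) \le \sup \mathcal A(H)/\inf \mathcal A(H)$. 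Taking the supremum over $a \in H$ gives the desired upper bound on $\rho(H)$.

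For the lower bound, given atoms $u,v \in \mathcal A(H)$ with $u < v$, I would write $v/u = r/s$ in lowest terms and consider the element $a = ru = sv \in H$. The factorizations $\underbrace{u+\dots+u}_{r}$ and $\underbrace{v+\dots+v}_{s}$ lie in $\mathsf Z(a)$, so $\{r,s\} \subset \mathsf L(a)$ and $\rho(H) \ge \rho(\mathsf L(a)) \ge r/s = v/u$. Now choose sequences $(u_n), (v_n)$ in $\mathcal A(H)$ with $u_n \to \inf \mathcal A(H)$ and $v_n \to \sup \mathcal A(H)$; for large $n$ we have $u_n < v_n$ (unless $\inf = \sup$, a degenerate case handled separately), and passing to the limit gives $\rho(H) \ge \sup \mathcal A(H)/\inf \mathcal A(H)$.

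For the accepted-elasticity equivalence, the $(\Leftarrow)$ direction is immediate: if $u_{\min}, u_{\max} \in \mathcal A(H)$ realize the inf and sup, the construction above with $u = u_{\min}$ and $v = u_{\max}$ produces $a \in H$ with $\rho(\mathsf L(a)) = \rho(H)$. For $(\Rightarrow)$, suppose $\rho(\mathsf L(a)) = \rho(H)$ for some $a \in H$. The inequalities $\max \mathsf L(a) \cdot \inf \mathcal A(H) \le a \le \min \mathsf L(a) \cdot \sup \mathcal A(H)$ force $\max \mathsf L(a)/\min \mathsf L(a) \le \sup \mathcal A(H)/\inf \mathcal A(H)$, and equality here requires both of the preceding inequalities to be equalities. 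But equality in the left-hand inequality means every atom in a length-$\max \mathsf L(a)$ factorization of $a$ equals $\inf \mathcal A(H)$, so the infimum is attained; symmetrically for the supremum. Finally, $\rho(H) = 1$ is equivalent to $\inf \mathcal A(H) = \sup \mathcal A(H)$, i.e., $|\mathcal A(H)| = 1$; atomicity then forces $H = \langle u \rangle \cong (\N_0,+)$ for the unique atom $u$, and the converse is trivial.

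The only subtle point, and hence the mildest obstacle, is the accepted-elasticity converse: one must be careful that equality in the length bounds rules out strict inequalities $u_i > \inf \mathcal A(H)$ for any atom appearing in the maximal-length factorization of $a$. Everything else amounts to estimating factorization lengths by the additive bounds $k\,\inf \mathcal A(H) \le a \le k\,\sup \mathcal A(H)$, which is direct.
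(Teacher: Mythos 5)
Your proposal is correct, but it takes a different route from the paper: the paper's proof is essentially a one-liner that observes that strong primariness forces $0$ not to be a limit point of $H^\bullet$ (Theorem~\ref{thm:strongly primary characterization when conductor non-empty}) and then quotes the elasticity formula from Gotti--O'Neill \cite[Theorem~3.2]{Go-ON19}, with the accepted-elasticity and $\rho(H)=1$ statements dispatched as immediate consequences, whereas you reprove the formula from scratch. Your two-sided bound is sound: the inequalities $k\,\inf\mathcal A(H)\le a\le k\,\sup\mathcal A(H)$ for any length-$k$ factorization give the upper bound (vacuous if $\sup\mathcal A(H)=\infty$), and the construction $a=ru=sv$ with $v/u=r/s$ in lowest terms, applied to atoms approaching the infimum and supremum, gives the lower bound; your equality analysis for the accepted-elasticity converse (equality in $\max\mathsf L(a)\,\inf\mathcal A(H)\le a\le \min\mathsf L(a)\,\sup\mathcal A(H)$ forces every atom in an extremal-length factorization to equal the corresponding bound, which is exactly attainment) is also correct, and it supplies detail the paper leaves implicit. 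Two small points worth making explicit in a final write-up: the division by $\inf\mathcal A(H)$ is legitimate because strong primariness gives $\inf H^\bullet>0$ (BF alone would not suffice, cf.\ Example~\ref{ex:when empty conductor SP/dense/BF/ACCP are not equivalent for PMs}), and in the forward direction of the accepted-elasticity claim one should note that $\sup\mathcal A(H)=\infty$ is impossible there, since sets of lengths of a \BF-monoid are finite and hence have finite elasticity. What your approach buys is a self-contained, elementary argument that does not depend on the external reference; what the paper's approach buys is brevity and a statement valid under the weaker hypothesis that $0$ is not a limit point of $H^\bullet$.
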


\begin{proof}
	As $H$ is strongly primary, $0$ is not a limit point of $H^\bullet$ and, therefore, the formula for $\rho (H)$ follows from \cite[Theorem~3.2]{Go-ON19}. This implies the statement on accepted elasticity. If $\rho (H)=1$, then $\sup \mathcal A (H) = \inf \mathcal A (H)$, which means that $|\mathcal A (H)|=1$. In this case, $H$ is isomorphic to $(\N_0, +)$. Conversely, if $H \cong (\N_0, +)$, then $H$ is factorial and $\rho (H)=1$.
\end{proof}

The following characterization of strongly primary globally tame Puiseux monoids should be compared with the corresponding results for finitely primary monoids (Theorem~\ref{global-tameness-finitely-primary}) and for strongly primary domains (Theorem~\ref{global-tameness-domains}). For these classes of strongly primary monoids, global tameness is equivalent to the finiteness of the elasticity. We should also notice that this equivalence holds true for wide classes of Noetherian domains \cite[Theorems~6.2 and~7.2]{Ka16b} but not for all Krull monoids \cite[Proposition~4.1 and Theorem~4.2]{Ge-Gr-Sc-Sc10}). However, neither for finitely primary monoids nor for strongly primary domains, the non-emptiness of the conductor implies global tameness. Conversely, it is easy to see that a globally tame monoid might have empty conductor. Indeed, consider a finitely generated monoid $H_0$ that is not Krull. Then $H_0$ is globally tame and $(H_0 : \widehat{H_0}) \ne \emptyset$. As a consequence, the monoid $H := \coprod_{i \ge 1} H_i$, where $H_i = H_0$ for all $i \in \N$, is globally tame with $\mathsf t(H) = \mathsf t (H_0)$ and $(H : \widehat H) = \emptyset$.

\begin{theorem} \label{thm:a SPPM has finite elasticity iff it is globally tame}
	Let $H$ be a strongly primary Puiseux monoid.
	\begin{enumerate}
		\item The following statements are equivalent.
		\begin{enumerate}
			\item[(a)] $H$ is globally tame.
			\item[(b)] $\rho(H) < \infty$.
			\item[(c)] $(H : \widehat{H}) \neq \emptyset$.
		\end{enumerate}
	    If these statements hold, then $\Lambda (H)=\infty$.
		\smallskip
		\item \label{item: strongly primary globally tame part 2} If $\Lambda (H) < \infty$, then $H$ is locally tame but not globally tame.
	\end{enumerate}
\end{theorem}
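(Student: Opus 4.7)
The plan is to split the claim into its two constituent assertions and handle each via a result already established in the paper. Local tameness is immediate from the Basic Lemma of Subsection~2.3: its second clause asserts that a strongly primary monoid with $\Lambda(H) < \infty$---the first alternative in display \eqref{enforcing-local-tameness}---is automatically locally tame, and this is exactly our hypothesis.

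For the failure of global tameness, the cleanest route is to invoke the parenthetical implication in Part~1 of the same theorem, namely "if $H$ is globally tame, then $\Lambda(H) = \infty$", which will have been proved before we reach Part~2; the contrapositive then finishes the argument. To make the step self-contained I would instead argue as follows. Suppose toward a contradiction that $H$ is globally tame. The bound $\rho(H) \le \mathsf t(H)$ from \eqref{elasticity-tameness} gives $\rho(H) < \infty$, and Lemma~\ref{lem:elasticity-of-Puiseux-monoids} then forces $M := \sup \mathcal A(H) < \infty$. Because every $a \in H^{\bullet}$ is a finite sum of atoms each bounded above by $M$ in the natural order of $\Q$, we obtain $\min \mathsf L(a) \ge a/M$. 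Fixing any $x \in H^{\bullet}$---such an $x$ exists because $H$ is nontrivial, being strongly primary---and taking $a = nx$ for arbitrary $n \in \N$, the ratio $a/M = nx/M$ is unbounded in $n$, so $\Lambda(H) = \infty$, contradicting the hypothesis $\Lambda(H) < \infty$.

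I do not anticipate a substantive obstacle: the proof is essentially a combination of the Basic Lemma with the explicit formula for the elasticity in Lemma~\ref{lem:elasticity-of-Puiseux-monoids}. The only mild subtlety is the elementary observation that a strongly primary Puiseux monoid contains elements of arbitrarily large magnitude, which is automatic because $\N \cdot x \subset H$ for every $x \in H^{\bullet}$ and nontriviality forces $H^{\bullet} \neq \emptyset$.
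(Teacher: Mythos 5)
There is a genuine gap: you have proved only part~2 of the theorem (together with the final assertion of part~1), but the main content of the statement is the chain of equivalences (a) $\Leftrightarrow$ (b) $\Leftrightarrow$ (c), and your proposal never addresses it. The implication (a) $\Rightarrow$ (b) is indeed immediate from \eqref{elasticity-tameness}, but the two substantive implications are missing. For (b) $\Rightarrow$ (c) the paper must show that finite elasticity forces the conductor to be non-empty: using $\alpha = \sup \mathcal A(H) < \infty$ (via Lemma~\ref{lem:elasticity-of-Puiseux-monoids}) and the strong primariness constant $\mathcal M(u)$ of a fixed atom $u$, one proves that every $q \in \mathsf q(H)$ with $q \ge \mathcal M(u)\alpha$ already lies in $H$ (by a minimality argument on representations $q = x - n u$), so that $\widehat H \setminus H$ is bounded and Proposition~\ref{prop:conductor-of-Puiseux-monoids} applies. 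For (c) $\Rightarrow$ (a) one invokes the characterization from \cite{Ge-Ro19b} that, when $(H \DP \widehat H) \ne \emptyset$, global tameness is equivalent to $\widehat H$ being a primary valuation monoid, which holds by Proposition~\ref{prop:closure-of-Puiseux-monoids}. Neither step is routine, and treating part~1 as ``already proved before we reach Part~2'' does not discharge the obligation, since part~1 is part of the statement you were asked to prove.

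The portion you do prove is correct, and your route is slightly different from the paper's: for ``$\rho(H) < \infty$ implies $\Lambda(H) = \infty$'' the paper argues that $\Lambda(H) < \infty$ would give $\rho_{\Lambda(H)}(H) = \infty$ and hence $\rho(H) = \infty$, whereas you bound $\min \mathsf L(a) \ge a/\sup\mathcal A(H)$ directly and let $a = nx$ grow; this is a clean, elementary alternative (it tacitly uses $\inf \mathcal A(H) > 0$, which is fine since strong primariness prevents $0$ from being a limit point of $H^\bullet$ by Theorem~\ref{thm:strongly primary characterization when conductor non-empty}). Likewise, deducing local tameness in part~2 from $\Lambda(H) < \infty$ via \eqref{enforcing-local-tameness} is exactly the paper's argument. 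But as it stands the proposal establishes only part~2, not the theorem.
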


\begin{proof}
	1.	(a) $\Rightarrow$ (b) This holds true for all atomic monoids (see \eqref{elasticity-tameness}).
	
	(b) $\Rightarrow$ (c) Because $H$ has finite elasticity, Lemma~\ref{lem:elasticity-of-Puiseux-monoids} guarantees that $\mathcal{A}(H)$ is bounded. Now take $\alpha = \sup \mathcal{A}(H)$ and fix $u \in \mathcal{A}(H)$. As $H$ is strongly primary, $\mathcal{M}(u) H^\bullet \subset u + H$. Consider the set
	\[
		S := \{q \in \mathsf{q}(H) : q \notin H \ \text{and} \ q \ge \mathcal{M}(u)\alpha\}.
	\]
	We claim that $S$ is the empty set. To prove this, take $q \in \mathsf{q}(H)$ satisfying that $q \ge \mathcal{M}(u)\alpha$, and then write $q = x-y$ for some $x,y \in H$ with $x > y > 0$. Since $x +(\mathcal{M}(y) -  1)u \in \mathcal{M}(y)H^\bullet \subset y + H$, one obtains that $x + (\mathcal{M}(y) -  1)u - y \in H$. Consequently,
	\[
		q = x-y = ( x + (\mathcal{M}(y)- 1)u - y ) - (\mathcal{M}(y) - 1)u \in H - \langle u \rangle,
	\]
	where $\langle u \rangle$ is the cyclic monoid generated by $u$. Now set
	\[
		n_0 := \min \{ n \in \N_0 : q = x - nu \text{ for some } x \in H \},
	\]
	and $x_0 := q + n_0 u$ (i.e., $q = x_0 - n_0 u$). Suppose by contradiction that $n_0 > 0$. Take $m \in \N$ and $u_1, \dots, u_m \in \mathcal{A}(H)$ such that $x_0 = u_1 + \dots + u_m$. Notice that
	\[
		m \alpha \ge m \max \{ u_i : i \in \llb 1,m \rrb \} \ge x_0 > q \ge \mathcal{M}(u) \alpha.
	\]
	Therefore $m > \mathcal{M}(u)$ and so
	\[
		x_0 = u_1 + \dots + u_{\mathcal{M}(u) - 1} + \sum_{i= \mathcal{M}(u)}^m u_i \in \mathcal{M}(u) H^\bullet \subset u + H.
	\]
	So one can write $q = x_0' - (n_0 - 1)u$, where $x_0' = x_0 - u \in H$. However, this contradicts the minimality of $n_0$. As a result, $n_0 = 0$ and so $q = x_0 \in H$. This implies that $S$ is the empty set, as desired. Then $\widehat{H} \setminus H \subset \mathsf{q}(H) \setminus H \subset \Q_{\le \mathcal{M}(u)\alpha}$. Hence $\sup (\widehat{H} \setminus H) < \infty$, and $(H : \widehat{H}) \neq \emptyset$ by Proposition~\ref{prop:conductor-of-Puiseux-monoids}.
	
	(c) $\Rightarrow$ (a) If $(H\DP \widehat H) \ne \emptyset$, then $H$ is globally tame if and only if $\widehat H$ is a primary valuation monoid by \cite[Theorem~3.8]{Ge-Ro19b}. Hence the assertion follows from Proposition~\ref{prop:closure-of-Puiseux-monoids}.\ref{item 3:prop closure, the complete integral closure is a valuation monoid}. 

For the last statement of part~1, assume that (a), (b), and (c) hold. Now suppose for a contradiction that $\Lambda (H) < \infty$. Then $\rho_{\Lambda (H)} (H)= \infty$, whence $\rho_k (H)= \infty$ for all $k \ge \Lambda (H)$. Thus, $\rho(H)=\infty$, which contradicts statement~(b).
	
	\smallskip
	2. Suppose that $\Lambda (H) < \infty$. Then $H$ is locally tame by~\eqref{enforcing-local-tameness} and it is not globally tame  by part~1.
\end{proof}

As mentioned in the proof of Theorem~\ref{thm:a SPPM has finite elasticity iff it is globally tame}, every globally tame atomic monoid has finite elasticity and, therefore, the implication (a) $\Rightarrow$ (b) in Theorem~\ref{thm:a SPPM has finite elasticity iff it is globally tame} holds for all atomic Puiseux monoids. On the other hand, it is unknown whether the implication (a) $\Rightarrow$ (c) in the same theorem holds for all atomic Puiseux monoids. However, the strongly primary condition is crucial to establish the rest of the implications, as the following examples illustrates.

\begin{example} \hfill \label{ex:without strongly primaryness globally-tame/finite-elasticity/nonempty-conductor are not equivalent for PMs}
	
	(b) $\not \Rightarrow$ [(a) or (c)] Consider the Puiseux monoid $H$ generated by the set $\{1\} \cup \{ 1 + 1/p : p \in \P \}$. One can verify without  much effort that $\mathcal A(H) = \{1\} \cup \{ 1 + 1/p : p \in \P \}$ and, therefore, $H$ is atomic. Since $\mathcal{A}(H) \subset [1,2]$, it follows from Lemma~\ref{lem:elasticity-of-Puiseux-monoids} that $\rho(H) < \infty$, which is condition~(b). To check that $H$ is not globally tame, it suffices to argue that $\mathsf{t}(1) = \infty$. For $p \in \P$ consider the length-$p$ factorization $z_p = p (1 + 1/p) \in \mathsf{Z}(1+p)$. Note that every strict sub-factorization of $z_p$ produces an element of $H$ with unique factorization. This, along with the fact that $1 \mid_H 1+p$, shows that $\mathsf{t}(1) \ge p$. Hence $\mathsf{t}(1) = \infty$, and so~$H$ does not satisfy condition~(a). As $H$ satisfies~(b) but not~(a), it follows from Theorem~\ref{thm:a SPPM has finite elasticity iff it is globally tame} that $H$ is not strongly primary (it also follows from Theorem~\ref{thm:structure-of-strongly-primary-and-its-closure}.\ref{item:strongly primary necessary condition}). Since $0$ is not a limit point of $H$, Theorem~\ref{thm:strongly primary characterization when conductor non-empty} guarantees that $(H : \widehat{H}) = \emptyset$. Thus, $H$ does not satisfy condition~(c).
	
	\smallskip
	(c) $\not \Rightarrow$ [(a) or (b)] It is enough to show that (c) $\not \Rightarrow$ (b). To this end, consider the Puiseux monoid $H = \langle 1/p : p \in \P \rangle \cup \Q_{\ge 1}$ introduced in the second part of Example~\ref{ex:two tests for characterization theorem of SPPM with nonempty conductor}. We have already seen that $H$ is an atomic monoid satisfying that $(H : \widehat{H}) \neq \emptyset$; that is, $H$ satisfies condition~(c). On the other hand, since $0$ is a limit point of $H^\bullet$, it follows from~\cite[Theorem~3.2]{Go-ON19} that $\rho(H) = \infty$. So $H$ does not satisfy condition~(b).
\end{example}

\smallskip
\subsection{Explicit construction of strongly primary Puiseux monoids} The next example yields a large family of strongly primary Puiseux monoids depending on countably many parameters that can be conveniently tuned to illustrate a variety of specific phenomena.  Moreover, we will need these Puiseux monoids in the proof of Theorem~\ref{thm:Puiseux-monoids-and-their-quotient-groups} and Proposition~\ref{prop:strongly primary PM that is not locally tame}.

\begin{example}\label{exa:sophisticate}
	Let $(\alpha_i)_{i \ge 1}$ be a sequence of positive real numbers with $\alpha = \inf \{ \alpha_i : i \in \N \} > 0$, and let $(b_i)_{i \ge 1}$ be a sequence of positive integers such that $b_i \mid b_{i+1}$ for every $i \in \mathbb N$. Then, we define
	\begin{equation}\label{equ:def-of-complicate-monoid}
		H = \bigcup_{i \ge 1} H_i \subset \mathbb Q_{\ge 0},
		\quad\text{where}\quad
		H_i = \{0\} \cup (\mathbb Q_{\ge \alpha_i} \cap b_i^{-1} \cdot \mathbb Z) \subset \mathbb Q_{\ge 0}.
	\end{equation}
	
	\begin{claim} \label{claim:sophisticate(1)}
		Take $i, j \in \mathbb N$ with $i \le j$. Then the following statements hold.
		\begin{enumerate}
			\item \label{item:quotient group 1} $H_i^\bullet + H_j^\bullet \subset H_j^\bullet$ and $H_i^\bullet - H_j^\bullet \subset b_j^{-1} \cdot \mathbb Z$.
			\item \label{item:quotient group 2}$H$ is a Puiseux monoid and $\mathsf q(H) = \bigcup_{i \ge 1} b_i^{-1} \cdot \mathbb Z$.
		\end{enumerate}
	\end{claim}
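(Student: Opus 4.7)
The plan is to establish part~\ref{item:quotient group 1} by a direct computation from the definitions, and then derive part~\ref{item:quotient group 2} as a formal consequence of part~\ref{item:quotient group 1}. The single key fact needed throughout is the chain of divisibilities $b_i \mid b_{i+1} \mid \cdots \mid b_j$, which yields $b_i^{-1} \cdot \mathbb{Z} \subset b_j^{-1} \cdot \mathbb{Z}$ whenever $i \le j$.

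For part~\ref{item:quotient group 1}, I fix arbitrary $x \in H_i^\bullet$ and $y \in H_j^\bullet$ and simply check the two defining conditions of $H_j^\bullet$ for $x+y$. On the one hand, $x + y \ge \alpha_i + \alpha_j \ge \alpha_j$ because $\alpha_i \ge \alpha > 0$. On the other hand, $x + y$ lies in $b_i^{-1} \cdot \mathbb{Z} + b_j^{-1} \cdot \mathbb{Z} \subset b_j^{-1} \cdot \mathbb{Z}$ by the divisibility relation above. The same inclusion gives $x - y \in b_j^{-1} \cdot \mathbb{Z}$, establishing the second assertion.

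For part~\ref{item:quotient group 2}, closure of $H$ under addition reduces, after disposing of the trivial case when one summand is zero, to an application of part~\ref{item:quotient group 1}: given $x \in H_i^\bullet$ and $y \in H_j^\bullet$ with $i \le j$ (WLOG), part~\ref{item:quotient group 1} puts $x + y$ in $H_j^\bullet \subset H$. Since $0 \in H$ and $H \subset \mathbb{Q}_{\ge 0}$, this makes $H$ a Puiseux monoid. For the identification of $\mathsf{q}(H)$, the inclusion $\mathsf{q}(H) \subset \bigcup_{i \ge 1} b_i^{-1} \cdot \mathbb{Z}$ follows by writing an arbitrary element of $\mathsf{q}(H)$ as $x - y$ with $x \in H_i$, $y \in H_j$ and $i \le j$, and invoking part~\ref{item:quotient group 1} (trivially handling the cases where $x$ or $y$ is zero, since $H_k \subset b_k^{-1} \cdot \mathbb{Z}$). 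For the reverse inclusion, given $z \in b_i^{-1} \cdot \mathbb{Z}$ I choose a positive multiple $N$ of $b_i$ large enough that $N \ge \alpha_i$ and $N + z \ge \alpha_i$; both $N$ and $N + z$ then lie in $H_i \subset H$, whence $z = (N+z) - N \in \mathsf{q}(H)$.

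The argument is essentially bookkeeping, with no substantive obstacle. The only mildly delicate point is remembering to separate the $x = 0$ and $y = 0$ cases when converting part~\ref{item:quotient group 1} (stated for $H_i^\bullet$ and $H_j^\bullet$) into statements about $H_i$ and $H_j$, which is needed for both the closure under addition and the inclusion $\mathsf{q}(H) \subset \bigcup_{i} b_i^{-1}\cdot\mathbb{Z}$.
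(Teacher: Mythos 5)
Your proposal is correct. Part~1 and the deduction that $H$ is a Puiseux monoid are essentially the paper's argument: the same computation $x+y\ge\alpha_j$ and $x\pm y\in b_j^{-1}\cdot\mathbb Z$ using $b_i\mid b_j$, and closure of $H$ under addition via part~1. The only place you diverge is the identification of $\mathsf q(H)$. For the inclusion $\mathsf q(H)\subset\bigcup_{i\ge 1}b_i^{-1}\cdot\mathbb Z$ the paper observes that the union is a subgroup of $(\mathbb Q,+)$ containing $H$, while you argue elementwise on differences $x-y$; these are the same triviality in different packaging (and your sign issue when the index of $x$ exceeds that of $y$ is harmless, since each $b_j^{-1}\cdot\mathbb Z$ is a group). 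For the reverse inclusion, however, the paper invokes Proposition~\ref{prop:closure-of-Puiseux-monoids}.\ref{item:prop closure, equality of closures} together with the observation that $\gcd\mathsf n(H)=1$, whereas you give a direct construction: for $z\in b_i^{-1}\cdot\mathbb Z$ pick a large integer $N$ with $N\ge\alpha_i$ and $N+z\ge\alpha_i$, so that $N,\,N+z\in H_i^\bullet\subset H$ and $z=(N+z)-N\in\mathsf q(H)$. Your route is more elementary and self-contained (no appeal to the structure of $\widehat H$ or to numerators of $H$), at the cost of nothing; the paper's route simply reuses machinery it has already set up. Both are complete proofs.
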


	\begin{proof}
		1. Pick $x_i \in H_i^\bullet$ and $x_j \in H_j^\bullet$. Then $x_j \ge \alpha_j$, and there exist $a_i, a_j \in \mathbb N$ such that $x_i = a_i/b_i$ and $x_j = a_j/b_j$. So, taking into account that $b_i \mid b_j$, we obtain
		\[
			x_i + x_j \ge \alpha_j
			\quad\text{and}\quad
			x_i \pm x_j = \frac{a_i b_j/b_i \pm a_j}{b_j} \in b_j^{-1} \cdot \mathbb Z.
		\]
		In particular, we see that $x_i + x_j \in H_j^\bullet$.
		
		\smallskip
		2. It follows immediately from part~1 that $H$ is a Puiseux monoid. Also, it is clear that $\bigcup_{i \ge 1} b_i^{-1} \cdot \mathbb Z$ is a subgroup of $(\mathbb Q, +)$ containing $H$. In addition, Proposition~\ref{prop:closure-of-Puiseux-monoids}.\ref{item:prop closure, equality of closures} guarantees that $\bigcup_{i \ge 1} b_i^{-1} \cdot \mathbb Z$ is a subgroup of $\mathsf q(H)$ (note that $\gcd \mathsf n(H) = 1$ because $a/b_1$ and $(a+1)/b_1$ for all large $a \in \mathbb N$). Hence we are done, since $\mathsf q(H)$ is the smallest subgroup of $(\mathbb Q, +)$ containing $H$.
	\end{proof}

	\begin{claim} \label{claim:sophisticate(2)}
	 	The following statements hold.
	 	\begin{enumerate}
	 		\item $H$ is a BF-monoid and, in particular, an atomic monoid.
	 		
	 		\item \label{item:atomicity 2}$H_i \cap \mathcal{A}(H) \subset [\alpha_i, 2 \alpha_i + 1[$ for every $i \in \N$.
	 		
	 		\item \label{item:atomicity 3} $\mathcal A(H) \subset \bigcup_{i \ge 1} \bigl(H_i \cap {[\alpha_i, 2\alpha_i + 1[}\bigr)$.
	 	\end{enumerate}
	\end{claim}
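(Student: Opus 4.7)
The engine for all three parts is the uniform positive lower bound $\alpha := \inf_{i \ge 1} \alpha_i > 0$, which gives $H^\bullet \subset \mathbb Q_{\ge \alpha}$ since every non-zero element of $H_i$ is at least $\alpha_i$. For part~1, if $x = u_1 + \cdots + u_n$ is a sum in $H$ with each $u_j \in H^\bullet$, then $x \ge n\alpha$, forcing $n \le x/\alpha$. This bounds the lengths of factorizations of $x$, so $H$ is a BF-monoid and, a fortiori, atomic. (One could instead cite Theorem~\ref{thm:strongly primary characterization when conductor non-empty}, since the above shows that $0$ is not a limit point of $H^\bullet$.)

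For part~2, fix $u \in H_i \cap \mathcal{A}(H)$. The lower bound $u \ge \alpha_i$ is immediate from $H_i^\bullet \subset \mathbb Q_{\ge \alpha_i}$. For the strict upper bound, I argue by contradiction: if $u \ge 2\alpha_i + 1$, define $v := \lceil b_i \alpha_i \rceil / b_i$. Then $v \in b_i^{-1} \cdot \mathbb Z$ and $\alpha_i \le v < \alpha_i + 1/b_i \le \alpha_i + 1$, so $v \in H_i^\bullet$. Moreover $u - v \in b_i^{-1} \cdot \mathbb Z$ (as $u \in H_i \subset b_i^{-1} \cdot \mathbb Z$) and $u - v > (2\alpha_i + 1) - (\alpha_i + 1) = \alpha_i$, so $u - v \in H_i^\bullet$ as well. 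The decomposition $u = v + (u - v)$ then contradicts the atomicity of $u$, whence $u < 2\alpha_i + 1$.

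Part~3 is an immediate consequence of part~2: since $H = \bigcup_{i \ge 1} H_i$, every $u \in \mathcal{A}(H)$ lies in some $H_i$, and part~2 places it in $H_i \cap [\alpha_i, 2\alpha_i + 1[$. The only real technical step is the explicit construction in part~2; the choice $v = \lceil b_i \alpha_i \rceil / b_i$ is natural since it is the smallest element of $H_i^\bullet$ inside $b_i^{-1} \cdot \mathbb Z$, thus leaving the largest possible residue $u - v$, which is exactly what keeps $u - v$ above the threshold $\alpha_i$.
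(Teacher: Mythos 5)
Your proof is correct and follows essentially the same route as the paper: the uniform bound $\inf H^\bullet = \alpha > 0$ gives the BF-property (the paper cites Theorem~\ref{thm:strongly primary characterization when conductor non-empty}, which your direct length bound reproves), and your splitting $u = v + (u-v)$ with $v = \lceil b_i\alpha_i\rceil/b_i$ is exactly the paper's decomposition $u = u' + (u-u')$ with $u' = \lceil \alpha_i b_i\rceil/b_i$. Part~3 is handled identically in both arguments.
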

	
	\begin{proof}
		1. Since $\inf H^\bullet = \alpha > 0$, it follows from Theorem~\ref{thm:strongly primary characterization when conductor non-empty} that $H$ is a BF-monoid. In particular, $H$ is atomic.
		
		\smallskip
		2. Take $u \in H_i \cap \mathcal{A}(H)$. Then $u \ge \alpha_i$ and $u = c/b_i$ for some $c \in \mathbb N$. Suppose, by way of contradiction, that $u \ge 2\alpha_i + 1$. In this case, $c \ge 2\alpha_i b_i + 1$, and one can see that
		\[
			\alpha_i \le \frac{\lceil \alpha_i b_i \rceil}{b_i} = u^\prime \in H_i^\bullet
			\quad\text{and}\quad
			\alpha_i = \frac{2\alpha_i b_i + 1 - (\alpha_i b_i + 1)}{b_i} < \frac{c - \lceil \alpha_i b_i \rceil}{b_i} = u - u^\prime \in  H_i^\bullet.
		\]
		This yields $u = u^\prime + (u - u^\prime) \in 2H^\bullet$. Hence $u \notin \mathcal A(H)$, which is a contradiction.
		
		\smallskip
		3. As an immediate consequence of part~2, one obtains that
		\[
			\mathcal{A}(H) = \bigcup_{i \ge 1} \big( H_i \cap \mathcal{A}(H) \big) \subset \bigcup_{i \ge 1} \big( H_i \cup [ \alpha_i, 2 \alpha_i + 1 [ \big).
		\]
	\end{proof}

	\begin{claim} \label{claim:sophisticate(3)}
		The monoid $H$ is strongly primary, and $\mathcal M(H) \le 1 + \sup \{ \lceil \alpha^{-1} (3\alpha_i + 1)\rceil : i \in \N \}$. In addition, the following statements are equivalent.
		\begin{enumerate}
			\item[(a)] $\sup \mathcal A(H) < \infty$.
			\item[(b)] $H$ is globally tame.
			\item[(c)] $(H : \widehat{H}) \neq \emptyset$.
			\item[(d)] $\rho(H) < \infty$.
			\item[(e)] $\rho_k(H) < \infty$ for every $k \in \N$.
			\item[(f)] $\lim_{i \to \infty} b_i < \infty$ or $\liminf \{\alpha_i : i \in \N\} < \infty$.
		\end{enumerate}
	\end{claim}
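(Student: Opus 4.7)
The plan is to fix $u\in\mathcal A(H)$ and any $i$ with $u\in H_i$, use Claim~\ref{claim:sophisticate(2)}(\ref{item:atomicity 2}) to bound $u<2\alpha_i+1$, and then set $n:=1+\lceil\alpha^{-1}(3\alpha_i+1)\rceil$. I will verify $nH^\bullet\subset u+H$: for $x_1,\dots,x_n\in H^\bullet$ with $x_k\in H_{i_k}$, let $J:=\max_k i_k$ and $j:=\max(i,J)$; iterating Claim~\ref{claim:sophisticate(1)}(\ref{item:quotient group 1}) places the sum $s:=x_1+\cdots+x_n$ in $H_J^\bullet$ and $s-u$ in $b_j^{-1}\cdot\Z$. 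A short case split shows $s-u\ge\alpha_j$: when $j=i$, $s-u>n\alpha-(2\alpha_i+1)\ge\alpha_i$; when $j=J>i$, some $x_{k_0}\ge\alpha_j$ while $(n-1)\alpha\ge 2\alpha_i+1>u$ yields $s-u\ge\alpha_j$. In both cases $s-u\in H_j\subset H$, so $\mathcal M(u)\le n$; taking the supremum over $u$ gives the bound on $\mathcal M(H)$ and therefore strong primariness.

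\textbf{The equivalences among (a)--(e).} I plan to deduce (a) $\Leftrightarrow$ (d) from Lemma~\ref{lem:elasticity-of-Puiseux-monoids} and $\inf\mathcal A(H)\ge\alpha>0$, and (b) $\Leftrightarrow$ (c) $\Leftrightarrow$ (d) from Theorem~\ref{thm:a SPPM has finite elasticity iff it is globally tame}. The elementary bound $\sup L\le\rho(H)\cdot k$ for $L\ni k$ gives (d) $\Rightarrow$ (e). For the converse (e) $\Rightarrow$ (a) I argue by contraposition. Assuming $\sup\mathcal A(H)=\infty$, choose atoms $u_n\to\infty$ and the smallest $i_n$ with $u_n\in H_{i_n}$; Claim~\ref{claim:sophisticate(2)}(\ref{item:atomicity 2}) then forces $\alpha_{i_n}>(u_n-1)/2\to\infty$, so $i_n\to\infty$ along a subsequence. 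Fixing $v\in\mathcal A(H)\cap H_{j_0}$ so that $v\in b_{i_n}^{-1}\cdot\Z$ for $n$ large, and setting $N_n:=\lfloor(2u_n-\alpha_{i_n})/v\rfloor$ together with $r_n:=2u_n-N_nv\in[\alpha_{i_n},\alpha_{i_n}+v)\cap b_{i_n}^{-1}\cdot\Z\subset H_{i_n}$, the factorization $2u_n=N_nv+r_n$ has length $\ge N_n+1\to\infty$ while $2\in\mathsf L(2u_n)$, giving $\rho_2(H)=\infty$ and contradicting~(e).

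\textbf{The equivalence (a) $\Leftrightarrow$ (f).} I handle each direction by contraposition. For $\neg$(f) $\Rightarrow\neg$(a), assume both $b_i\to\infty$ and $\alpha_i\to\infty$ yet $\sup\mathcal A(H)\le M<\infty$. Only finitely many $l$ have $\alpha_l\le M$ (bounded by some $L$), so every atom lies in $\bigcup_{l\le L}H_l\subset b_L^{-1}\cdot\Z$ and hence $H\subset b_L^{-1}\cdot\Z$; picking the smallest $i>L$ with $b_i>b_L$ (so $b_i>b_{i-1}$) and then $c\in\{\lceil\alpha_ib_i\rceil,\lceil\alpha_ib_i\rceil+1\}$ with $b_i/b_{i-1}\nmid c$ produces $c/b_i\in H_i\setminus b_{i-1}^{-1}\cdot\Z\subset H\setminus b_L^{-1}\cdot\Z$, a contradiction. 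For the converse: if $b_i$ is eventually equal to $b$, each $bH_i$ already covers $\Z_{\ge\lceil b\alpha_i\rceil}$, so $bH$ is cofinite in $\N_0$ and $H\cong bH$ is a numerical monoid; if $\liminf\alpha_i=C<\infty$, any atom $u$ may be placed in some $H_{i_k}$ with $\alpha_{i_k}\le C+1$ and $i_k\ge i_0$ (where $u\in H_{i_0}$), forcing $u<2\alpha_{i_k}+1\le 2C+3$ via Claim~\ref{claim:sophisticate(2)}(\ref{item:atomicity 2}). This third step is the main obstacle: its subtlety lies in exploiting the divisibility chain $b_i\mid b_{i+1}$ to produce the witness $c/b_i\in H\setminus b_L^{-1}\cdot\Z$ in one direction, and in selecting a subsequence index compatible with a given atom's denominator in the other.
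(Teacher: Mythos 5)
Your argument is correct. The proof of strong primariness and of the bound on $\mathcal M(H)$, as well as the equivalences (a) $\Leftrightarrow$ (d), (b) $\Leftrightarrow$ (c) $\Leftrightarrow$ (d) and (d) $\Rightarrow$ (e), coincide in substance with the paper's proof (the paper orders the indices $i_1 \ge \dots \ge i_n$ instead of your case split on $\max(i,J)$, but the estimates are the same). Where you genuinely diverge is in how the cycle is closed: the paper proves (e) $\Rightarrow$ (f) and, separately, (a) $\Rightarrow$ (f) and (f) $\Rightarrow$ (a), whereas you prove (e) $\Rightarrow$ (a) directly and then (a) $\Leftrightarrow$ (f). Your (e) $\Rightarrow$ (a) --- unbounded atoms force $\rho_2(H)=\infty$ via the decomposition $2u_n = N_n v + r_n$ with a fixed atom $v$ and a remainder $r_n \in H_{i_n}$ --- is markedly simpler than the paper's (e) $\Rightarrow$ (f), which first re-indexes so that $(\alpha_i)_{i\ge 1}$ and $(b_i)_{i \ge 1}$ are strictly increasing and then analyzes a relation $u_n + u_{n+1} = m u_0 + u'_{n+1}$; your route trades that combinatorial analysis for the cheap observation that $\max \mathsf L(2u_n) \ge N_n + 1 \to \infty$ while $2 \in \mathsf L(2u_n)$. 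Likewise your $\neg$(f) $\Rightarrow$ $\neg$(a) works by exhibiting an explicit element $c/b_i \in H_i \subset H$ whose denominator witnesses $H \not\subset b_L^{-1}\cdot \Z$, while the paper argues by counting ($|\mathcal A(H)| = \infty$, but only finitely many atoms can lie in the finitely many $H_j$ with $\alpha_j \le N$); both are sound. One small gloss in your liminf case of (f) $\Rightarrow$ (a): an atom $u$ can be ``placed'' in $H_{i_k}$ (with $\alpha_{i_k} \le C+1$ and $i_k \ge i_0$) only when $u \ge \alpha_{i_k}$; if no such index exists then $u \le C+1$ anyway, so the bound $\sup \mathcal A(H) \le 2C+3$ still stands --- worth one explicit sentence, but not a gap.
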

	
	\begin{proof}
	 	In order to show that $H$ is a strongly primary monoid, it suffices to verify that  $\mathcal M(u) < \infty$ for every $u \in \mathcal A(H)$. Take $u_0 \in \mathcal A(H)$. By Claim~\ref{claim:sophisticate(2)}.\ref{item:atomicity 3}, there exists $i_0 \in \mathbb N$ such that $u_0 \in H_{i_0}\cap [\alpha_{i_0}, 2\alpha_{i_0} + 1[$. Accordingly, set $n := \lceil 1 + \alpha^{-1} (3\alpha_{i_0} + 1) \rceil$. It is clear that $n \le 1 + \sup \{ \lceil \alpha^{-1} (3\alpha_i + 1)\rceil : i \in \N \}$. We aim to prove that $\mathcal M(u_0) \le n$. This will imply that $H$ is strongly primary. Consider $n$ elements $u_1, \ldots, u_n \in H^\bullet$, and set $u = u_1 + \dots + u_n - u_0$. For each $k \in \llb 1, n \rrb$ take $i_k \in \mathbb N$ such that $u_k \in H_{i_k}$. We may assume without loss of generality that $i_1 \ge \dots \ge i_n$. Then
		\[
			u \ge u_1 + (n-1)\alpha - u_0 \ge u_1 + (3\alpha_{i_0} + 1) - (2\alpha_{i_0} + 1) \ge \alpha_{i_1} + \alpha_{i_0} \ge \alpha_{\hat\imath},
		\]
		where $\hat\imath = \max(i_0, i_1)$. On the other hand, it is immediate from Claim~\ref{claim:sophisticate(1)}.\ref{item:quotient group 1} that
		\[
			u \in H_{i_1}^\bullet + \dots + H_{i_n}^\bullet - H_{i_0}^\bullet \subset H_{i_1}^\bullet - H_{i_0}^\bullet \subset b_{\hat\imath}^{-1} \cdot \mathbb Z.
		\]
		Consequently, $u \in \mathbb Q_{\ge \alpha_{\hat\imath}} \cap b_{\hat\imath}^{-1} \cdot \mathbb Z = H_{\hat\imath}^\bullet \subset H$.
		
		Let us proceed to show that the given conditions are equivalent.
		
		(a) $\Leftrightarrow$ (d) Since $\inf H^\bullet = \alpha > 0$, it follows from Lemma~\ref{lem:elasticity-of-Puiseux-monoids} that (a) and (d) are equivalent.
		
		(b) $\Leftrightarrow$ (c) $\Leftrightarrow$ (d) As $H$ is strongly primary, both equivalences follow from Theorem~\ref{thm:a SPPM has finite elasticity iff it is globally tame}.
		
		(a) $\Rightarrow (f)$ Suppose by contradiction that $\lim_{i \to \infty} b_i = \infty$ and $\liminf \{\alpha_i : i \in \N\} = \infty$. Because $H$ is atomic and $(b_i)_{i \ge 1}$ is an unbounded sequence, $|\mathcal{A}(H)| = \infty$. Fix $N \in \N$. As $\liminf \{\alpha_i : i \in \N\} = \infty$, the set $J = \{j \in \N : \alpha_j \le N\}$ is finite. In addition, $\mathcal{A}(H) \cap H_j$ is finite for every $j \in J$. Then Claim~\ref{claim:sophisticate(2)}.\ref{item:atomicity 3}, along with $|\mathcal{A}(H)| = \infty$, ensures that $\sup \mathcal{A}(H) \ge N$. Hence $\sup \mathcal{A}(H) = \infty$, which is a contradiction.
		
		$(f) \Rightarrow (a)$ We first observe that if $\lim_{i \to \infty} b_i < \infty$, then $H$ would be finitely generated, and so $\sup \mathcal{A}(H) < \infty$.
		On the other hand, suppose that $\ell := \liminf \{ \alpha_i : i \in \N\} < \infty$. Take a subsequence $(\alpha_{k_i})_{i \ge 1}$ of $(\alpha_i)_{i \ge 1}$ converging to $\ell$, and set $s = \sup\{ \alpha_{k_i} : i \in \N \}$. Then for every $j \in \N$ with $\ell < \alpha_j$, there exists $i \in \N$ such that $\alpha_{k_i} < \alpha_j$ and $b_j \mid b_{k_i}$. This implies that $H_j \subset H_{k_i}$, and so
		\[
			\mathcal{A}(H) \cap H_j \subset \mathcal{A}(H) \cap H_{k_i} \subset [ \alpha_{k_i}, 2 \alpha_{k_i} + 1 [ \, \subset [\inf H^\bullet, 2s + 1],
		\]
		where the second inclusion holds by Claim~\ref{claim:sophisticate(2)}.\ref{item:atomicity 2}. On the other hand, for those indices $j \in \N$ with $\ell \ge \alpha_j$,
		\[
			\mathcal{A}(H) \cap H_j \subset [ \alpha_j, 2\alpha_j + 1 [ \, \subset [\inf H^\bullet, 2 \ell + 1] \subset [\inf H^\bullet, 2s+1].
		\]
		Hence it follows from Claim~\ref{claim:sophisticate(2)}.\ref{item:atomicity 3} that $\sup \mathcal{A}(H) \le 2s+1 < \infty$.
		
		(d) $\Rightarrow$ (e) This holds for all atomic monoids via~\eqref{eq:elasticity in terms of local elasticities}.
		
		(e) $\Rightarrow$ (f) Suppose, by way of contradiction, that $\lim_{i \to \infty} b_i = \infty$ and $\liminf \{\alpha_i : i \in \N\} = \infty$. As $\liminf \{\alpha_i : i \in \N\} = \infty$ the set $S := \{ n \in \N : \alpha_n < \alpha_j \text{ for every } j \ge n \}$ has infinite cardinality. Taking $(k_i)_{i \ge 1}$ to be a strictly increasing enumeration of $S$, one still has
		\[
			H = \bigcup_{i \ge 1} H_{k_i}, \ \text{ where } \ H_{k_i} = \{0\} \cup (\Q_{\alpha_{k_i}} \cap b_{k_i}^{-1} \cdot \Z).
		\]
		So after replacing $(\alpha_i)_{i \ge 1}$ by $(\alpha_{k_i})_{i \ge 1}$ and $(b_i)_{i \ge i}$ by $(b_{k_i})_{i \ge 1}$, one can assume that $(\alpha_i)_{i \ge 1}$ strictly increases to infinite. In addition, if $b_j = b_{j+1} =  \dots = b_{j+k}$ for some $j,k \in \N$, then $H_j \cup H_{j+1} \cup \dots \cup H_{j+k} = H_j$. Thus, one can also assume that the sequence $(b_i)_{i \ge 1}$ is strictly increasing without affecting the fact that $(\alpha_i)_{i \ge 1}$ strictly increases to infinite. As a result, $\mathcal{A}(H) \cap H_i$ is a non-empty finite set for every $i \in \N$.
		
		We will prove that $\rho_2(H) = \infty$. Fix $N \in \N$ and set $u_0 = \min \mathcal{A}(H) \cap H_1 = \min \mathcal{A}(H)$. Due to the implication (a) $\Rightarrow$ (f) (which has been already established), one obtains that $\mathcal{A}(H)$ is unbounded. Take $u_n, u_{n+1} \in \mathcal{A}(H)$ such that $\mathsf{d}(u_n) = b_n$, $\mathsf{d}(u_{n+1}) = b_{n+1}$, and $u_n/u_0 > N+1$. Since $u_n - u_0 > 0$ and $\mathsf{d}(u_n - u_0) \mid b_{n+1}$, we find that $u_{n+1} + (u_n - u_0) \in H$, that is, $u_0 \mid_H u_n + u_{n+1}$. Let $m u_0$ be the largest multiple of $u_0$ dividing $u_n + u_{n+1}$ in $H$, and write $u_n + u_{n+1} = m u_0 + x$ for some $x \in H$. One can readily check that $\mathsf{d}(u_n + u_{n+1} - m u_0) \nmid  b_n$. So there exists $u'_{n+1} \in \mathcal{A}(H) \cap H_j$ for some $j \ge n+1$ such that $u'_{n+1} \mid_H x$. So $u_n + u_{n+1} = m u_0 + u'_{n+1} + x'$ for certain $x' \in H$. The maximality of $m$ now implies that $x' = 0$, and then we have $u_n + u_{n+1} = m u_0 + u'_{n+1}$. Since $u'_{n+1}$ is an atom satisfying that $\mathsf{d}(u'_{n+1})$ divides $b_{n+1}$, it follows that $u'_{n+1} \in \mathcal{A}(H) \cap H_{n+1}$. Hence
		\[
			m = \frac{u_n}{u_0} + \frac{u_{n+1} - u'_{n+1}}{u_0} \ge \frac{u_n}{u_0} - 1 > N.
		\]
		Notice that $\{2,m+1\} \subset \mathsf{L}(u_n + u_{n+1})$ because $u_n + u_{n+1} = m u_0 + u'_{n+1}$. This, along with the fact that $m > N$, implies that $\rho_2(H) = \infty$, which contradicts the statement of part~(b).
	\end{proof}
\end{example}
\smallskip

Our next result demonstrates that the cardinality of the class of strongly primary Puiseux monoids that are globally tame or locally but not globally tame (see Theorem \ref{thm:a SPPM has finite elasticity iff it is globally tame}) is at least as large as the cardinality of the class of additive subgroups of the rationals.

\begin{theorem} \label{thm:Puiseux-monoids-and-their-quotient-groups}~	
	\begin{enumerate}
		\item Every monoid whose quotient group is a rank-one torsion-free group and that is not a group is isomorphic to a Puiseux monoid.
		
		\item For every subgroup $Q$ of $(\Q,+)$ there is a Puiseux valuation monoid $V$ such that $\mathsf q(V)=Q$.
		
		\item For every nontrivial Puiseux valuation monoid $V$ there exists a strongly primary monoid $H$ such that $\widehat H = V$ and $(H \DP \widehat H) \ne \emptyset$. Under the Continuum Hypothesis, the set of all non-isomorphic strongly primary globally tame Puiseux monoids has, then, the cardinality of the continuum.
		
		\item For every Puiseux valuation monoid $V$ that is not isomorphic to $(\N_0, +)$, there exists a strongly primary Puiseux monoid $H$ such that $\widehat H = V$, $\Lambda (H) < \infty$, and $(H \DP \widehat H)= \emptyset$. Under the Continuum Hypothesis, the set of all non-isomorphic strongly primary locally but not globally tame Puiseux monoids has, then, the cardinality of the continuum.
	\end{enumerate}
\end{theorem}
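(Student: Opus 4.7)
\emph{Parts~1 and~2.} For Part~1, my plan is to embed the rank-one torsion-free group $\mathsf q(H)$ into $(\Q,+)$ and identify $H$ with its image; I would then argue that if $H$ contained both a positive element $a$ and a negative element $b$, writing $-a/b = p/q$ in lowest terms with $p,q \in \N$ gives $-a = (q-1)a + pb \in H$ and, symmetrically, $-b \in H$, from which one deduces $-c \in H$ for every $c \in H$, contradicting the assumption that $H$ is not a group. Hence $H \subset \Q_{\ge 0}$ after, if needed, rescaling by $-1$. For Part~2, I would take $V := Q \cap \Q_{\ge 0}$; it is a seminormal Puiseux monoid with $\mathsf q(V) = Q$, and hence a valuation monoid by Proposition~\ref{prop:closure-of-Puiseux-monoids}.\ref{item 2:prop closure, equality of closures}.

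\emph{Part~3.} The plan is to apply Example~\ref{exa:sophisticate}. First write $\mathsf q(V) = \bigcup_{i \ge 1} b_i^{-1} \cdot \Z$ with $b_i \mid b_{i+1}$ (the standard structure theorem for subgroups of $(\Q,+)$) and rescale so that $b_1 = 1$. Then use Example~\ref{exa:sophisticate} with these $b_i$ and the constant sequence $\alpha_i = 1$. By Claim~\ref{claim:sophisticate(3)}, $H$ is strongly primary; since $\liminf \alpha_i < \infty$, condition~(f) of that claim holds, so $(H \DP \widehat H) \ne \emptyset$ and $H$ is globally tame. Combining Claim~\ref{claim:sophisticate(1)}.\ref{item:quotient group 2} with Proposition~\ref{prop:closure-of-Puiseux-monoids}.\ref{item:prop closure, equality of closures} will yield $\widehat H = V$. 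The cardinality assertion follows because $(\Q,+)$ has $2^{\aleph_0}$ subgroups, and, since $\widehat H$ is an isomorphism invariant, distinct choices of $V$ give non-isomorphic monoids $H$; the matching upper bound is immediate, as Puiseux monoids are subsets of the countable set $\Q$.

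\emph{Part~4.} The hypothesis $V \not\cong (\N_0,+)$ forces $(b_i)$ to be unbounded. I would again apply Example~\ref{exa:sophisticate} with the same $b_i$, this time with the divergent choice $\alpha_i := i + 1/b_i$ (after, if needed, passing to a subsequence making $(b_i)$ strictly increasing). Strong primariness and $\widehat H = V$ will follow exactly as in Part~3, and both clauses of condition~(f) of Claim~\ref{claim:sophisticate(3)} now fail, giving $(H \DP \widehat H) = \emptyset$. The essential and technically delicate step will be to verify $\Lambda(H) < \infty$. The starting point is the observation that $\alpha_j = (jb_j + 1)/b_j$ has $\mathsf d(\alpha_j) = b_j$ and equals $\min H_j^\bullet$, which forces it to be an atom: in any decomposition $\alpha_j = u + v$ with $u, v \in H^\bullet$, one summand would need to lie in $H_k$ for some $k \ge j$ (to produce the right denominator after summing) and therefore be $\ge \alpha_j$, conflicting with the other summand being $\ge \alpha_1$. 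I plan to exploit the same size-and-denominator argument to show that any $a \in H$ below the $2\alpha_1$ threshold is an atom, and that for larger $a$ one can pick $k$ with $a \in [2\alpha_k, 2\alpha_k + 2)$ and produce a pair of atoms $u, v \in H_k \cap [\alpha_k, \alpha_k + 2)$ with $u + v = a$; a direct (and, in a few problematic congruence cases, slightly refined by shifting $u$ within the atom window of $H_k$) computation of the denominators of $u$ and $a - u$ will deliver such a pair and give $\min \mathsf L(a) \le 2$. Once $\Lambda(H) < \infty$ is established, Theorem~\ref{thm:a SPPM has finite elasticity iff it is globally tame}.\ref{item: strongly primary globally tame part 2} will supply local-but-not-global tameness, and the continuum-many-non-isomorphic count proceeds as in Part~3, restricted to the still $2^{\aleph_0}$-many non-cyclic subgroups of $(\Q,+)$.
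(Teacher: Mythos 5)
Parts 1--3 of your proposal are essentially correct. In Part 1 you replace the paper's citation (that a submonoid of $\Q$ containing positive and negative elements is a group) by a short direct computation, which is fine; Part 2 matches the paper; and in Part 3 your use of Example~\ref{exa:sophisticate} with constant $\alpha_i=1$ produces, after rescaling, exactly the monoid $H=\{0\}\cup V_{\ge 1}$ that the paper constructs directly, so this works, albeit with heavier machinery. One wording issue in the counting: you need continuum many pairwise \emph{non-isomorphic} subgroups of $(\Q,+)$ (Fuchs), not merely continuum many distinct subgroups, since $\widehat H$ is only an isomorphism invariant up to isomorphism; the intended argument is clear, but state it that way.

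The genuine gap is in Part 4, at the central step $\Lambda(H)<\infty$. Your case division chooses the index $k$ from the size of $a$ alone ($a\in[2\alpha_k,2\alpha_k+2)$) and then looks for two atoms of $H_k$ summing to $a$. But any sum of two elements of $H_k$ lies in $b_k^{-1}\cdot\Z$, so no such decomposition exists whenever $\mathsf d(a)\nmid b_k$, and such elements occur in abundance: take $a=\alpha_j+2$ for large $j$ (note $2\in H_1$), so that $\mathsf d(a)=b_j$ while $a\approx j+2<2\alpha_j$; the size-determined $k$ is about $j/2<j$, hence $b_j\nmid b_k$. More generally, every $a$ whose least admissible index $j_0$ (minimal $j$ with $\mathsf d(a)\mid b_j$) satisfies $\alpha_{j_0}+\alpha_1\le a<2\alpha_{j_0}$ escapes your scheme: both atoms cannot lie in $H_{j_0}$ (that would force $a\ge 2\alpha_{j_0}$), and they cannot both lie in any $H_k$ with $k<j_0$ for the denominator reason above; your proposed repair of ``shifting $u$ within the atom window of $H_k$'' keeps both atoms in a single $H_k$ and so cannot fix this. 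What is needed for this middle range is a mixed decomposition, e.g.\ $a=(a-\alpha_m)+\alpha_m$ with a small index $m$ chosen so that $a-\alpha_m\in[\alpha_{j_0},\alpha_{j_0}+\alpha_1)$ (these windows do cover $[\alpha_{j_0}+\alpha_1,2\alpha_{j_0})$, and $\mathsf d(a-\alpha_m)\nmid b_{j_0-1}$ then forces $a-\alpha_m$ to be an atom), an argument your sketch does not contain; also note that not every element of $H_k\cap[\alpha_k,\alpha_k+2)$ is an atom (integers near $k+1$ decompose), so denominators must be controlled even in the cases you do cover. The paper avoids all of this: it merely exhibits $2\in\mathsf L(2i)$ via $2i=\alpha_i+\alpha_i'$ and invokes \cite[Lemma~3.5.2]{Ge-Ro19b} to conclude $\Lambda(H)<\infty$ for a strongly primary monoid. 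Either adopt that shortcut or complete the mixed-index analysis; with $\Lambda(H)<\infty$ secured, the remainder of your Part 4 (emptiness of the conductor, local but not global tameness via Theorem~\ref{thm:a SPPM has finite elasticity iff it is globally tame}, and the counting) goes through.
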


\noindent
{\it Remark.} If $V$ is a Puiseux valuation monoid isomorphic to $(\N_0,+)$ and $H \subset V$ a submonoid with $\widehat H = V$, then $\widehat H = \widetilde H$ by Proposition~\ref{prop:closure-of-Puiseux-monoids}.\ref{item:prop closure, equality of closures} and \eqref{primary-closure}, $H$ is finitely generated because $\widetilde H$ is finitely generated, and hence $(H \DP \widehat H) \ne \emptyset$ by \cite[Theorem~2.7.13]{Ge-HK06a}.

\begin{proof}
	1. Let $H$ be an additive monoid whose quotient group $\mathsf q(H)$ is a rank-one torsion-free group. Then, by \cite[Section~24]{Fu70}, the group $\mathsf q(H)$ is isomorphic to a subgroup of $(\Q,+)$. Thus, without restriction we may suppose that $H \subset \mathsf q(H) \subset \Q$. If $H \subset \Q_{\ge 0}$ or $H \subset \Q_{\le 0}$, then we are done. On the other hand, if $H$ contains positive and negative rational numbers, then $H$ is a group by \cite[Theorem~2.9]{Gi84}.
	
	\smallskip
	2. If $Q$ is an additive subgroup of the rational numbers, then it immediately follows that $Q \cap\Q_{\ge 0}$ is a valuation monoid with quotient group $Q$.
	
	\smallskip
	3. Let $V$ be a nontrivial Puiseux valuation monoid. Clearly, $H = \{0\} \cup \{v \in V :  v \ge 1\}$ is a submonoid of $V$. Since $v+V \subset H$ for any $v \in V_{\ge 1}$, it follows that $(H \DP V) \ne \emptyset$, which implies that $\widehat H = \widehat V$. By Proposition~\ref{prop:closure-of-Puiseux-monoids}.\ref{item 2:prop closure, equality of closures} we have $\widehat V = V$. Since $0$ is not a limit point of $H^\bullet$, Theorem~\ref{thm:strongly primary characterization when conductor non-empty} implies that $H$ is strongly primary.
	
	To argue the second statement of part~3, let $\mathfrak{c}$ denote the cardinality of the continuum and let $\mathcal{S}$ be the set of all strongly primary Puiseux monoids with non-empty conductors (up to isomorphism). Since every Puiseux monoid is at most countable, $|\mathcal{S}| \le 2^{\aleph_0} = \mathfrak{c}$. To check that $|\mathcal{S}| \ge \mathfrak{c}$, take $G$ and $G'$ to be non-isomorphic subgroups of $(\Q,+)$. Proposition~\ref{prop:closure-of-Puiseux-monoids} ensures that $V = G \cap \Q_{\ge 0}$ and $V' = G' \cap \Q_{\ge 0}$ are Puiseux valuation monoids. As we have seen before, there exist Puiseux monoids $H$ and~$H'$ in $\mathcal{S}$ satisfying that $\widehat{H} = V$ and $\widehat{H'} = V'$. Since $\mathsf{q}(H) = G \not\cong G' = \mathsf{q}(H')$, the Puiseux monoids $H$ and $H'$ are not isomorphic. Hence $|\mathcal{S}| \ge |\mathcal{S}'|$, where $\mathcal{S}'$ denotes the set of all subgroups of $(\Q,+)$ (up to isomorphism). It follows from~\cite[Section~24]{Fu70} that an abelian group is isomorphic to a subgroup $(\Q,+)$ if and only if it is a rank-one torsion-free group. Then \cite[Corollary~85.2]{Fu73} guarantees that $|\mathcal{S}'| = \mathfrak{c}$. Hence $|\mathcal{S}| = \mathfrak{c}$.

	\smallskip
	4. Let $V$ be a Puiseux valuation monoid that is not isomorphic to $(\N_0,+)$. From Proposition~\ref{prop:closure-of-Puiseux-monoids} one obtains that $V = \mathsf q (V) \cap \mathbb Q_{\ge 0}$. On the other hand, it follows from \cite[Theorem~2]{BZ51} that there exists a sequence $(b_i)_{i \ge 1}$ of positive integers such that $b_i \mid b_{i+1}$ for every $i \in \mathbb N$ and $\mathsf q(V)$ is isomorphic to $\bigcup_{i \ge 1} b_i^{-1} \cdot \mathbb Z$. Hence it will suffice to construct a strongly primary Puiseux monoid $H$ with $\widehat{H} = \bigcup_{i \ge 1} b_i^{-1} \cdot \mathbb N_0$ satisfying that $\Lambda(H) < \infty$ and $(H : \widehat{H}) = \emptyset$.
	
	Since $V$ is not isomorphic to $(\N_0,+)$, the sequence $(b_i)_{i \ge 1}$ tends to infinity. Thus, we may assume without restriction that it is strictly increasing with $b_1 \ge 3$. Accordingly, we define, for each $i \in \mathbb N$,
	\[
		\alpha_{i} = i - \frac{1}{b_i}
		\quad\text{and}\quad
		H_i = \{0\} \cup (\mathbb Q_{\ge \alpha_i} \cap b_i^{-1} \cdot \mathbb Z).
	\]
	Note that $\inf \{ \alpha_i : i \in \N \} = \alpha_1 \ge 2/3 > 0$. Following Example~\ref{exa:sophisticate}, we consider the Puiseux monoid $H = \bigcup_{i \ge 1} H_i$. Proposition~\ref{prop:closure-of-Puiseux-monoids}.\ref{item:prop closure, equality of closures}, along with Claim~\ref{claim:sophisticate(1)}.\ref{item:quotient group 2} of Example~\ref{exa:sophisticate}, ensures that
	\[
		\widehat{H} = \bigcup_{i \ge 1} b_i^{-1} \cdot \mathbb N_0.
	\]
	Furthermore, it follows from Claim~\ref{claim:sophisticate(3)} of Example~\ref{exa:sophisticate} that $H$ is strongly primary. It is clear that $\alpha'_i := i + 1/b_i \in \mathcal{A}(H)$ for every $i \in \N$. Then for each $i \in \N$, the equality $2i = \alpha_i + \alpha'_i$ implies that $2 \in \mathsf L(2i)$. Thus, $\Lambda(H) < \infty$ by \cite[Lemma~3.5.2]{Ge-Ro19b} and, therefore, $(H \DP \widehat H) = \emptyset$ by Theorem~\ref{thm:a SPPM has finite elasticity iff it is globally tame}.
	
	As $\Lambda(H) < \infty$, it follows from Theorem~\ref{thm:a SPPM has finite elasticity iff it is globally tame}.\ref{item: strongly primary globally tame part 2} that $H$ is locally tame but not globally tame. Now by mimicking the argument we gave to verify the second statement of part~3, one can verify the second statement of part~4.
\end{proof}
\smallskip

As pointed out in  \eqref{strongly-primary-domains-are-locally-tame} and~\eqref{finitely-primary-monoids-are-locally-tame}, all strongly primary domains and all  finitely primary monoids are locally tame. So far there is precisely one example in the literature of a strongly primary monoid that is not locally tame (it is constructed as a submonoid of a one-dimensional local Noetherian domain, \cite[Proposition~3.7 and Example~3.8]{Ge-Ha-Le07}). Here we construct a strongly primary Puiseux monoid that is not locally tame. For an additive atomic monoid $H$, for $k \in \N$ and $b \in H$, we set
\[
	\mathsf{Z}_{\text{min}}(k,b) := \bigg\{ \sum_{i=1}^j a_i \in \mathsf{Z}(H) \, : \ j \le k, \ b \mid_H \sum_{i=1}^j a_i, \, \text{ and } \, b \nmid_H \sum_{i \in I} a_i \ \text{ for any } \ I \subsetneq \llb 1,j \rrb \bigg\}.
\]
Then we have
\begin{equation} \label{eq:tameness in terms of minimal factorizations}
	\tau(b) = \sup_k \sup_z \big\{ \min \mathsf{L}\big(\pi(z) - b \big) : z \in \mathsf{Z}_{\text{min}}(k,b)\big\}.
\end{equation}

\begin{proposition} \label{prop:strongly primary PM that is not locally tame}
	There exists a strongly primary Puiseux monoid that is not locally tame.
\end{proposition}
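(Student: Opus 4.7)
The plan is to construct $H$ as a specific instance of the family of Example~\ref{exa:sophisticate} and exhibit an atom $b \in \mathcal A(H)$ for which $\tau(H,b) = \infty$; by the characterization~\eqref{local-tameness-versus-tau} in the Basic Lemma, this suffices to show that $H$ is not locally tame. Strong primariness of $H$ will be automatic from Claim~\ref{claim:sophisticate(3)} for any admissible choice of $(\alpha_i)_{i \ge 1}$ and $(b_i)_{i \ge 1}$, so all the work reduces to engineering the failure of $\tau(H,b) < \infty$.

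Concretely, I would pick $(\alpha_i)_{i \ge 1}$ strictly increasing to infinity with $\alpha_1 > 0$ and $(b_i)_{i \ge 1}$ a rapidly growing divisibility chain, set $H = \bigcup_i H_i$ as in~\eqref{equ:def-of-complicate-monoid}, and fix a small atom $b \in H_1 \cap \mathcal A(H)$. For each $n \in \N$, the target is to exhibit atoms $u_1^{(n)}, \ldots, u_{m_n}^{(n)} \in \mathcal A(H)$ drawn from distinct and sufficiently high levels such that: (i) $b \mid_H \sum_j u_j^{(n)}$ in $H$ but $b$ divides no proper subsum, so that $\sum_j u_j^{(n)} \in \mathsf{Z}_{\text{min}}(m_n, b)$; and (ii) the residue $r_n := \sum_j u_j^{(n)} - b \in H$ satisfies $\min \mathsf L(r_n) \ge n$. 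By the identity~\eqref{eq:tameness in terms of minimal factorizations}, (i) and (ii) together yield $\tau(H,b) \ge n$ for every $n$, whence $\tau(H,b) = \infty$, as required.

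The mechanism for (ii), which is the heart of the construction, would be to design the $u_j^{(n)}$ so that $r_n$ has a specific arithmetic form forcing any factorization of it to use only atoms of uniformly bounded value. By Claim~\ref{claim:sophisticate(2)}, atoms of level $i$ have value in $[\alpha_i, 2\alpha_i+1)$, so if one can arrange --- through rigidity imposed by a careful coordination of numerators and denominators across levels --- that no atom of level $i \ge 2$ is compatible with a factorization of $r_n$, then the atoms available in any factorization of $r_n$ are bounded by $2\alpha_1 + 1$, and $\min \mathsf L(r_n) \ge n$ once the value of $r_n$ grows linearly in $n$.

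The main obstacle is precisely this denominator-rigidity check. In a Puiseux monoid a sum of atoms with large denominators may still have a small resulting denominator via numerator cancellation, so a naive ``$r_n$ has small denominator, hence all of its atoms do'' argument fails. Engineering the sequences $(\alpha_i)$, $(b_i)$ and the atoms $u_j^{(n)}$ to block every possible cancellation of this kind, while simultaneously verifying through Claim~\ref{claim:sophisticate(2)} that each $u_j^{(n)}$ is genuinely an atom of $H$ and that the minimality condition (i) is preserved, is the delicate arithmetic core of the argument.
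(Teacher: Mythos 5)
You have the right overall strategy, and it is the paper's strategy: instantiate the family of Example~\ref{exa:sophisticate} (so strong primariness is free from Claim~\ref{claim:sophisticate(3)}), fix one small atom, and use \eqref{local-tameness-versus-tau} together with \eqref{eq:tameness in terms of minimal factorizations} to show its $\tau$-invariant is infinite by producing minimal factorizations whose residues have arbitrarily large minimal length. But the proposal stops exactly where the proof begins: you never specify the sequences, the atom, or the elements, and you explicitly concede that the ``denominator-rigidity'' step --- the lower bound $\min \mathsf L(r_n) \ge n$ --- is an unresolved obstacle. That step is the entire content of the proposition, so as it stands this is a plan with an acknowledged gap, not a proof.

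Moreover, the mechanism you sketch for the gap is stronger than what is needed and is doubtful as stated: you want to exclude \emph{all} atoms of level $i \ge 2$ from factorizations of $r_n$, so that every available atom is bounded by $2\alpha_1+1$ while $r_n$ grows linearly in $n$. Since the levels in Example~\ref{exa:sophisticate} are nested ($b_i \mid b_{i+1}$) rather than coprime, there is no obvious way to enforce this, and if $r_n$ has a large denominator it \emph{must} contain high-level atoms. The paper avoids this entirely: it takes $b_i = 2^i$, $\alpha_i = k_i$ with $k_0=0$ and the super-quadratic growth $k_i > \tfrac{3}{2}k_{i-1}^2$, uses the atom $1$ and the length-two minimal factorization $z_n = 2(k_n + 2^{-n})$, and shows via a $2$-adic valuation argument only that no atom of level $n$ (denominator $2^n$) can occur in a factorization of the residue $x_n = 2k_n + 2^{-(n-1)} - 1$. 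Atoms of all levels up to $n-1$ are allowed; they are merely bounded by $2k_{n-1}+1$, and since $x_n \approx 2k_n \gg k_{n-1}^2$, every factorization of $x_n$ has length $> k_{n-1} \to \infty$. So the needed rigidity is a one-level exclusion plus a growth condition on $(\alpha_i)$, not a uniform confinement to the bottom level; supplying this (or an equivalent workable mechanism) is what your argument is missing.
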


\begin{proof}
	Let $(k_i)_{i \ge 0}$ be a strictly increasing sequence of integers such that $k_0 = 0$ and $k_i > \frac{3}{2}k_{i-1}^2$ for each $i \in \N$. Now consider the Puiseux monoid
	\[
		H := \bigcup_{i \ge 0} H_i, \ \ \text{where} \ \ H_i := \big\langle k_i + \frac{m}{2^i} \ : \ m \in \N_0 \big\rangle.
	\]
	Since $H_i = \{0\} \cup \big( \Q_{\ge k_i} \cap 2^{-i} \cdot \Z \big)$ for every $i \in \N_0$, the monoid $H$ belongs to the class of Puiseux monoids constructed in Example~\ref{exa:sophisticate}. As a result, $H$ is strongly primary. Because the sequence $(k_i)_{i \ge 0}$ is strictly increasing, $\{k_i + 1/2^i : i \in \N_0\} \subset \mathcal{A}(H)$ (in particular, $1 \in \mathcal{A}(H)$). As we know by \eqref{local-tameness-versus-tau}, arguing that $H$ is not locally tame amounts to verifying that $\tau(1) = \infty$. For every $i \in \N$ consider the length-$2$ factorization
	\[
		z_i = 2(k_i + 1/2^i) \in \mathsf{Z}(H).
	\]
	Observe that $2(k_i + 1/2^i) - 1 = (2k_i - k_{i-1} - 1) + (k_{i-1} + 1/2^{i-1}) \in H$ for every $i \in \N$. Therefore $1 \mid_H \pi(z_i)$, where $\pi$ is the factorization homomorphism of $H$. In addition, for every $i \in \N$ the fact that $k_i + 1/2^i \in \mathcal{A}(H)$ implies that $z_i \in \mathsf{Z}_{\text{min}}(k,1)$ for each $k \in \N_{\ge 2}$. Now fix $n \in \N_{\ge 2}$, set $x_n = \pi(z_n) - 1$, and take $z'_n \in \mathsf{Z}(x_n)$. Since $\mathsf{d}(x_n) = 2^{n-1}$, the factorization $z'_n$ must contain an atom whose denominator is at least $2^{n-1}$. This, along with the fact that $x_n < 2k_n < k_N$ when $n < N$ (because $4/3 < 2 \le k_n$ and $\frac{3}{2}k_n^2 < k_N$), implies that the largest atom appearing in $z'_n$ is of the form $k_n + m/2^n$ or $k_{n-1} + m'/2^{n-1}$ (for $m,m' \in \N$ such that $2 \nmid mm'$).
	\vspace{2pt}
	
	Let us verify that the largest atom appearing in $z'_n$ cannot be of the form $k_n + m/2^n$ for any $m \in \N$ such that $2 \nmid m$. Suppose, by way of contradiction, that this is not the case. Then because $2k_n > x_n$, the factorization $z'_n$ must contain exactly one copy of the atom $k_n + m/2^n$, and so one can write
	\begin{equation} \label{eq:strongly primary not locally tame 1}
		2k_n + \frac{1}{2^{n-1}} - 1 = k_n + \frac{m}{2^n} + \sum_{i=1}^N u_i,
	\end{equation}
	for some $N \in \N$ and $u_1, \dots, u_N \in \mathcal{A}(H)$ such that $\mathsf{d}(u_i) \le 2^{n-1}$ for each $i \in \llb 1, N \rrb$. After taking $2$-adic valuation in both sides of~(\ref{eq:strongly primary not locally tame 1}) one finds that $2 \mid m$, which is a contradiction.
	
	Let $u$ be an atom appearing in $z'_n$. By the conclusion of the previous paragraph, $\mathsf{d}(u) \le 2^{n-1}$ and so $u \in \bigcup_{i=0}^{n-1} \mathcal{A}(H_i)$. From this, one can deduce (as in Claim~\ref{claim:sophisticate(2)} of Example~\ref{exa:sophisticate}) that $u < 2k_{n-1} + 1$. Then $2(k_n + 1/2^n) - 1 = x_n \le |z'_n| \, (2k_{n-1} + 1)$, along with $2k_n \ge 3k_{n-1}^2 + 1$, yields
	\[
		|z'_n| \ge \frac{2k_n + \frac{1}{2^{n-1}} - 1}{2k_{n-1} + 1} > \frac{2k_n - 1}{3k_{n-1}} \ge k_{n-1}.
	\]
	As a result, $\min \mathsf{L}(\pi(z_n) - 1) = \min \mathsf{L}(x_n) \ge k_{n-1}$. As $n$ was arbitrarily taken in $\N_{\ge 2}$,
	\begin{align*}
		\tau(1) &\ge \sup_{k \ge 2} \, \sup_z \big\{ \min \mathsf{L}( \pi(z) - 1 ) : z \in \mathsf{Z}_{\text{min}}(k,1) \big\} \\
				&\ge \sup_{k \ge 2} \, \sup \big\{ \min \mathsf{L}( \pi(z_n) - 1) : n \in \N_{\ge 2} \big\} \\
				&\ge \sup \, \{ k_{n-1} : n \in \N_{\ge 2}\} = \infty.
	\end{align*}
	We thus see that $\tau(1) = \infty$, which implies that $H$ is not locally tame.
\end{proof}

\medskip
We conclude this section with a couple of main open problems on strongly primary Puiseux monoid.

\begin{problem}
1. Let $H$ be a BF-monoid. By definition, we have that
\[
\rho (H) < \infty \qquad \text{implies} \qquad \rho_k (H) < \infty \quad \text{for all $k \in \N$} \,.
\]
If $H$ stems from a strongly primary domain (Theorem \ref{global-tameness-domains}), or is finitely primary (Theorem \ref{global-tameness-finitely-primary}), or is a strongly primary Puiseux monoid as given in Example \ref{exa:sophisticate}, then the reverse implication holds. On the other hand, it is easy to see that, for a Krull monoid $H$, the finiteness of the local elasticities $\rho_k(H)$ need not imply the finiteness of the elasticity $\rho(H)$; however, we do not know whether the same can happen when $H$ is a strongly primary monoid or even a strongly primary Puiseux monoid.

\smallskip
2. Let $H$ be a strongly primary monoid. By \eqref{enforcing-local-tameness}, $H$ is locally tame provided that

\[ \quad     \Lambda (H) < \infty \quad \text{or} \quad \rho_k (H) < \infty \ \text{for all $k \in \N$} \,.
\]
By Proposition \ref{prop:strongly primary PM that is not locally tame}, there are strongly primary Puiseux monoids that are not locally tame, whence both conditions may fail. Yet, we do not know whether there are any
	locally tame strongly primary monoids that do not  satisfy one of the two properties in~\eqref{enforcing-local-tameness}.
\end{problem}

\section{Sets of lengths of locally tame strongly primary monoids}
\label{sec:sets of lengths of LTSPM}

\smallskip
\subsection{Structure theorem for sets of lengths and unions of sets of lengths} We proceed to study sets of lengths and unions of sets of lengths of locally tame strongly primary monoids. In order to do so we first introduce sets of distances and catenary degrees.

Let $H$ be a multiplicatively written BF-monoid. If $\rho (H)> 1$, then there is  $a \in H$ with $|\mathsf L (a)| > 1$, whence the $n$-fold sumset $\mathsf L (a) + \dots + \mathsf L (a)$ is contained in $\mathsf L (a^n)$. This implies that $|\mathsf L(a^n)| > n$ for every $n \in \N$. For a finite set $L = \{m_1, \ldots, m_k \} \subset \Z$, with $k \in \N_0$ and $m_1 < \dots < m_k$, we denote by $\Delta (L) = \{m_i - m_{i-1} : i \in \llb 2,k \rrb \} \subset \N$  the set of distances of $L$ and by
\[
	\Delta (H) = \bigcup_{L \in \mathscr L (H)} \Delta (L) \ \subset \N
\]
the {\it set of distances} of $H$. By definition, $\Delta (H)=\emptyset$ if and only if $\rho (H)=1$ and if this holds, then $H$ is said to be {\it half-factorial}.

Take $a \in H$ and $N \in \N$. A finite sequence  $z_0,  \ldots, z_k \in \mathsf Z (a)$  is called a {\it (monotone) $N$-chain of factorizations of $a$} if $\mathsf d (z_{i-1}, z_i) \le N$ for each $i \in \llb 1,k \rrb$ (and $|z_0| \le \dots \le |z_k|$ or $|z_0| \ge \dots \ge |z_k|$). The {\it catenary degree} $\mathsf c (a)$ (resp., the {\it monotone catenary degree $\mathsf c_{\mon} (a)$}) is the smallest $N \in \N_0$ such that each two factorizations $z, z' \in \mathsf Z (a)$ can be concatenated by an $N$-chain of factorizations (resp., by a monotone $N$-chain of factorizations). It is easy to see that $\mathsf c (a) \le \mathsf c_{\mon} (a) \le \max \mathsf L(a)$. Then
\[
	\mathsf c(H) = \sup \{\mathsf c(a) : a \in H \} \in \N_0 \cup \{\infty\} \quad \text{and} \quad
	\mathsf c_{\mon} (H) = \sup \{\mathsf c_{\mon}(a) : a \in H \} \in \N_0 \cup \{\infty\}
\]
are the {\it catenary degree}  and the {\it monotone catenary degree} of $H$, respectively. The monoid $H$ is factorial if and only if $\mathsf c(H)=0$. If $H$ is not half-factorial, then it follows from \cite[Sections~1.4 and~1.6]{Ge-HK06a} that
\begin{equation} \label{catenary}
	\min \Delta (H) = \gcd \Delta (H), \quad 2 + \sup \Delta (H) \le \mathsf c (H) \le \mathsf t (H), \quad \text{and} \quad \mathsf c (H) \le \mathsf c_{\mon} (H) \,.
\end{equation}

For every $k \in \N$, we set $\mathscr U_k (H) = \{k\}$ if $H = H^{\times}$ and we set
\[
	\mathscr U_k(H) = \bigcup_{k \in L \in \mathscr L (H)} L
\]
otherwise. We call $\mathscr U_k(H)$ the {\it union of sets of lengths} containing $k$. Then
\[
	\rho_k (H) = \sup \mathscr U_k (H), \quad \text{and we set} \quad \lambda_k (H) = \min \mathscr U_k (H) \,.
\]
The structure of unions of sets of lengths has been studied for a wide range of monoids and domains (see \cite{F-G-K-T17, Ba-Sm18, Fa-Tr18a,Tr19a} for recent progress).

\smallskip
There are various results showing that given sets occur as sets of lengths in primary BF-monoids. Indeed, let $L \subset \N_{\ge 2}$ be a finite set. Then there are a numerical monoid $H$ and an element $a \in H$ such that $L = \mathsf L(a)$ (\cite[Theorem~3.3]{Ge-Sc18e}). For every sufficiently large $s \in \N$, there are a locally tame primary Mori monoid $H_s \subset (\N_0^s, +)$ with non-empty conductor and an element $a \in H_s$ such that $L = \mathsf L(a)$ (\cite[Theorem~4.2]{Ge-Ha-Le07}). Moreover, there is a Puiseux monoid containing each finite subset of $\N_{\ge 2}$ as a set of lengths (\cite[Theorem~3.6]{Go19a}; see also \cite[Theorem~4.6]{Go19d}). However, such a Puiseux monoid is not strongly primary (a close inspection of the given construction reveals that the constructed monoid does not satisfy the property described in Theorem~\ref{thm:structure-of-strongly-primary-and-its-closure}.\ref{item:strongly primary necessary condition}). Indeed, such a phenomenon cannot occur in locally tame strongly primary monoids. Their systems of sets of lengths are well structured as described in the next theorem, and we observe that  their structure is much simpler than the structure of sets of lengths in non-primary monoids (for an overview see \cite[Chapter 4.7]{Ge-HK06a}).

Parts of Theorem~\ref{thm:structure-of-sets-of-lengths-and-unions} are already known. For example, if $H$ is not only locally but also globally tame, then $\rho_k (H) - \rho_{k-1} (H) \le 1+ \mathsf t (H)$ for every $k \ge 2$, whence the unions $\mathscr U_k (H)$ have the form given in Theorem~\ref{thm:structure-of-sets-of-lengths-and-unions} by \cite[Theorems~3.5 and~4.2]{Ga-Ge09b} (there are locally tame monoids with finite sets of distances whose differences $\rho_k (H) - \rho_{k-1}(H)$ are unbounded and whose unions $\mathscr U_k (H)$ do not have that form). Below we offer a fresh approach to these structural results on sets of lengths and their unions. As summarized in Subsection~\ref{background-strongly-primary}, we recall that all strongly primary monoids with non-empty conductor (Lemma~\ref{2.1}), all finitely primary monoids, and  all strongly primary domains are locally tame. For strongly primary Puiseux monoids we refer to Theorems \ref{thm:a SPPM has finite elasticity iff it is globally tame} and \ref{thm:Puiseux-monoids-and-their-quotient-groups}.

\begin{theorem} \label{thm:structure-of-sets-of-lengths-and-unions}
	Let $H$ be a locally tame strongly primary monoid.
	\begin{enumerate}[label=\textup{\arabic{*}.}]
		\item \label{item:structure theorem part 1} We have
		\[
		  		\mathsf c(H) \le \min \left\{ 1 + \Lambda(H),  \max \{ \mathcal M (u)-1, \mathsf t(H, uH^{\times}) \} :  u \in  \mathcal A(H) \right\} \,.
		\]
		In particular, the set of distances $\Delta (H)$ is finite.
		\item \label{item:structure theorem part 2} Suppose that $\Delta (H) \ne \emptyset$ and set $d = \min \Delta (H)$.
		\begin{enumerate}[label=\textup{(\alph{*})}]
			\item There exists $M \in \N_0$ such that, for every $a\in H$,
			\[
				\big( \min \mathsf L (a) + d \cdot \N_0 \big) \cap \llb \min \mathsf L (a)+M, \max \mathsf L (a)-M \rrb \subset \mathsf L (a) \subset \min \mathsf L (a) + d \cdot \N_0 \,.
			\]
			\item There exists $M' \in \N_0$ such that, for every $k \in \N$,
			\[
			 	\big( \lambda_k (H) + d \cdot \N_0 \big) \cap \llb \lambda_k (H)+M', \rho_k (H)-M' \rrb \subset  \mathscr U_k (H) \subset \lambda_k (H) + d \cdot \N_0 \,.
			\]
		 \end{enumerate}
	\end{enumerate}
\end{theorem}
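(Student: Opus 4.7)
My plan for Part~1 is to establish two independent upper bounds on the catenary degree $\mathsf c(H)$; the finiteness of $\Delta(H)$ then follows at once from $\sup \Delta(H) \le \mathsf c(H) - 2$ in~\eqref{catenary}. To prove $\mathsf c(H) \le 1+\Lambda(H)$, I would show that any factorization $z = u_1 \cdot \ldots \cdot u_k \in \mathsf Z(a)$ with $k > \min \mathsf L(a)$ admits a length-reducing neighbour $z'$ obtained by replacing the subproduct $u_1 u_2$ with a factorization of $u_1 u_2$ of minimum length $\le \Lambda(H)$; this makes $\mathsf d(z, z') \le 1+\Lambda(H)$ while strictly decreasing the length. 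Iterating yields a chain from $z$ to some minimum-length factorization, and any two minimum-length factorizations of $a$ are themselves within distance $\min \mathsf L(a) \le \Lambda(H)$. For the second bound, I would fix $u \in \mathcal A(H)$, set $N = \max\{\mathcal M(u)-1, \mathsf t(H, uH^{\times})\}$, and induct on $\max \mathsf L(a)$: if every factorization of $a$ has length less than $\mathcal M(u)$, then any two factorizations are trivially within distance $\mathcal M(u)-1$; otherwise the strongly primary property applied to a sufficiently long factorization yields $u \mid a$, the definition of the local tame degree delivers $z_1, z_1' \in \mathsf Z(a) \cap u \mathsf Z(H)$ with $\mathsf d(z, z_1)$ and $\mathsf d(z', z_1')$ both at most $\mathsf t(H, uH^{\times})$, and cancelling $u$ from $z_1, z_1'$ lets us apply the inductive hypothesis to $a/u$.

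For Part~2 the outer inclusions are immediate: consecutive differences within any $\mathsf L(a)$ lie in $\Delta(H)$, and $d = \gcd \Delta(H)$ by~\eqref{catenary}, giving $\mathsf L(a) \subset \min \mathsf L(a) + d \cdot \N_0$; passing to unions yields $\mathscr U_k(H) \subset \lambda_k(H) + d \cdot \N_0$. The inner inclusions are the substance of the theorem. For~(a), I would first upgrade finiteness of $\mathsf c(H)$ to finiteness of the monotone catenary degree $\mathsf c_{\mon}(H)$, using local tameness together with the strongly primary property to convert a general chain of factorizations into a monotone one. A monotone $\mathsf c_{\mon}(H)$-chain from a minimum-length factorization of $a$ to a maximum-length one then meets every length in $\min \mathsf L(a) + d \cdot \N_0$ outside a buffer of size $M$ at each endpoint, where $M$ depends only on $H$. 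For~(b), for every candidate length $\ell$ in the prescribed range I would produce an element $b \in H$ with $k \in \mathsf L(b)$ and $\mathsf L(b)$ nearly saturating $\mathscr U_k(H)$ at both ends; then Part~2(a) applied to $b$ extracts $\ell \in \mathsf L(b) \subset \mathscr U_k(H)$.

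The main obstacle will be securing the \emph{uniformity} of the deviation constants $M$ and $M'$, rather than the pointwise AAP-shape of each $\mathsf L(a)$ and $\mathscr U_k(H)$. Uniformity is precisely what separates the structure theorem for locally tame strongly primary monoids from the general BF-monoid setting, in which sets of lengths are only AAMPs with parameters a priori depending on $a$. I expect the crucial ingredients for uniformity to be the bound $\omega(H,u) \le \mathcal M(u)$ from the proof of Lemma~\ref{lem:global-tameness} together with the $\tau$-characterization of local tameness in~\eqref{local-tameness-versus-tau}: these control both the number of atoms that may sit ``in the middle'' of a factorization and the length-cost of replacing them, and a careful accounting should let one choose $M$ uniformly in terms of $\mathsf c_{\mon}(H)$, $d$, and the $\tau$-invariants of atoms that can appear in factorizations of bounded length, with $M'$ then inherited from $M$ through the Part~2(b) reduction.
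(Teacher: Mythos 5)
Your part~1 is essentially the paper's argument (two separate bounds, the second by induction on $\max \mathsf L(a)$ using strong primariness to get $u \mid a$ and the tame degree to move into $\mathsf Z(a) \cap u\mathsf Z(H)$), except that your first bound breaks at the replacement step: substituting a minimum-length factorization for the two-atom subproduct $u_1u_2$ need not strictly decrease $|z|$, since trivially $\min \mathsf L(u_1u_2) \le 2$. The correct move, as in the paper, is to replace a block of $1+\Lambda(H)$ atoms by a factorization of length at most $\Lambda(H)$; this strictly shortens $z$ with $\mathsf d(z,z') \le 1+\Lambda(H)$, and the rest of your iteration goes through. The serious gap is in your route to part~2(a). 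First, the claim that local tameness plus strong primariness lets one upgrade $\mathsf c(H)<\infty$ to $\mathsf c_{\mon}(H)<\infty$ is false: finitely primary monoids are strongly primary and locally tame by \eqref{finitely-primary-monoids-are-locally-tame}, yet there are finitely primary monoids with infinite monotone catenary degree (\cite{Fo06a}; the paper points this out in its final subsection). Second, even granting a finite bound $N$ on (monotone) chain steps, your conclusion does not follow: such a chain only shows that consecutive lengths visited differ by at most $N$, i.e.\ that gaps in $\mathsf L(a)$ are bounded -- which is already known from $2+\sup\Delta(H)\le \mathsf c(H)$ in \eqref{catenary} -- whereas 2(a) asserts that \emph{every} term of the progression $\min \mathsf L(a)+d\cdot\N_0$ in the middle range lies in $\mathsf L(a)$; gaps could, a priori, all equal $2d$. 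Hitting each residue step of size $d=\min\Delta(H)$ uniformly is exactly the content of the structure theorem for sets of lengths, which the paper does not reprove but imports from \cite[Theorem~4.3.6]{Ge-HK06a} (proved there via the tameness/$\omega$ machinery you gesture at in your last paragraph without supplying an argument).

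In 2(b) your outer inclusions are fine, but the plan of producing a single $b$ with $k \in \mathsf L(b)$ and $\mathsf L(b)$ ``nearly saturating $\mathscr U_k(H)$ at both ends'' is not available in general, and in any case it leaves uncovered the band of lengths within $M$ of $k$, because 2(a) only yields lengths in $\llb \min \mathsf L(b)+M, \max \mathsf L(b)-M \rrb$ and an $L \ni k$ may have $\min L$ or $\max L$ equal to $k$. The paper handles the three regions separately: the top via a sequence $L_i \ni k$ with $\max L_i \to \rho_k(H)$, the bottom via an $L \supset \{\lambda_k(H),k\}$ (for which necessarily $\min L = \lambda_k(H)$), and the middle band via the $k$-fold sumset trick -- choose $a$ with $\{m,m+d\} \subset \mathsf L(a)$, so $\mathsf L(a^{k})$ contains a long arithmetic progression, then translate it into $\mathscr U_k(H)$ using $\mathscr U_{k-k_0}(H)+\mathscr U_{k_0}(H) \subset \mathscr U_k(H)$ -- finally enlarging the constant to $M' = \max\{M+k_0,\rho^\ast\}$ to absorb the finitely many small $k$. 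These gluing steps, together with a correct proof (or citation) of 2(a), are what your proposal is missing.
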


\begin{proof}
	1. Take $a \in H$. To establish the first upper bound on $\mathsf c(H)$, it is sufficient to assume that $\Lambda (H) < \infty$ and to prove that $\mathsf c(a) \le 1 + \Lambda (H)$. To do so, we verify that every factorization $z \in \mathsf Z(a)$ has a $(\Lambda (H)+1)$-chain of factorizations from $z$ to a factorization of length at most $\Lambda (H)$. We proceed by induction on $|z|$. If $|z| \le \Lambda (H)$, then there is nothing to do. Otherwise, write $z = u_1 \cdot \ldots \cdot u_k$ for some $k \in \N$ such that $k \ge \Lambda (H)+1$ and $u_1, \dots, u_k \in \mathcal A (H_{\red})$. Then $u_1 \cdot \ldots \cdot u_{1 + \Lambda (H)}$ has a factorization $x$ of length $|x| \le \Lambda (H)$. Then $z' = x u_{2+ \Lambda (H)} \cdot \ldots \cdot u_k$ is a factorization of $a$ satisfying that $|z'| < |z|$ and $\mathsf d(z,z') \le 1 + \Lambda (H)$. Thus, the assertion follows by the induction hypothesis.

	To show the second upper bound on $\mathsf c (H)$, we choose $u \in \mathcal A (H)$ and set $N = \max \{ \mathcal M (u)-1, \mathsf t (H, uH^{\times})\}$. We prove that $\mathsf c (a) \le N$ for every  $a \in H$, and we proceed by induction on $\max \mathsf L (a)$. Take $a \in H$ and $z, z' \in \mathsf Z(a)$. If $\max \mathsf L (a) \le N$, then $\mathsf c (a) \le \max \mathsf L(a) \le N$. Suppose that $\max \mathsf L (a) > N$. Then $a \in (H \setminus H^{\times})^{\max \mathsf L (a)} \subset (H \setminus H^{\times})^{\mathcal M (u)} \subset uH$, and hence $a = u b$ for some $b \in H$. By definition of tame degree, there exist factorizations $y = ux$ and $y' = ux'$ in $\mathsf Z(a) \cap u \mathsf Z(H)$ such that $\mathsf d (z, ux) \le \mathsf t (H, uH^{\times})$ and $\mathsf d (z', ux') \le \mathsf t (H, uH^{\times})$. Clearly, $x,x' \in \mathsf Z (b)$. Since $\max \mathsf L(b) < \max \mathsf L(a)$, the induction hypothesis implies that there are factorizations $x=x_0, x_1, \dots, x_s=x'$ such that $\mathsf d (x_{i-1},x_i) \le N$ for every $i \in \llb 1,s \rrb$. Therefore $z, ux, ux_1, \ldots, ux_s, z'$ are factorizations of $a$ whose successive distances are bounded by $N$. Hence $\mathsf c (H) \le N$. Since $2 + \sup \Delta (H) \le \mathsf c(H)$, we obtain that $\Delta (H)$ is finite.
	
	\smallskip
	2. By convention, arithmetic progressions can be empty, finite, or infinite. We set $\mathscr U_0(H)=\{0\}$ and note that $\mathscr U_1 (H)=\{1\}$. If $m \in \N_0$, then $\infty \pm m = \infty$ and $\llb m, \infty \rrb = \N_{\ge m}$.
	
	(a) See \cite[Theorem~4.3.6]{Ge-HK06a}.
	
	(b) Let $M \in \mathbb N_0$ be as in part~(a). Since $d \in \Delta (H)$, there is $a \in H$ such that $\{m,m+d\} \subset \mathsf L(a)$ for some $m \in \N$. Then, for every $k \in \N$, the $k$-fold sumset $\mathsf L (a) + \dots + \mathsf L (a)$ is contained in $\mathsf L (a^k)$, whence $\{km, km+d, \ldots, km+kd\} \subset \mathsf L (a^k)$. Consequently, there exist $a_0 \in H$ and $k_0 \in \mathsf L (a_0)$ such that
	\[
		(k_0 + d \cdot \mathbb Z) \cap \llb k_0-M, k_0+M \rrb \subset \mathsf L (a_0) \subset \mathscr U_{k_0}(H).
	\]
	For every $k \ge k_0$, it follows that
	\begin{equation} \label{equ:4.2}
		(k + d \cdot \mathbb Z) \cap \llb k-M, k+M \rrb \subset (k - k_0) + \mathscr U_{k_0}(H) \subset \mathscr U_{k-k_0} (H) + \mathscr U_{k_0}(H) \subset \mathscr U_k (H) \subset k + d \cdot \mathbb Z,
	\end{equation}
	where the last inclusion is obvious when considering that $L \subset k + d \cdot \mathbb Z$ for all $L \in \mathscr L(H)$ containing $k$.
	
	Now, fix an index $k \ge k_0$ and let $(L_i)_{i \ge 1}$ be a sequence of sets from $\mathscr L(H)$ such that $k \in \bigcap_{i \ge 1} L_i$ and $\lim_{i \to \infty} \max L_i = \rho_k(H)$. For each $i \in \mathbb N$, part~(a) guarantees that
	\[
		(k + d \cdot \mathbb Z) \cap \llb k + M, \max L_i - M \rrb \subset (k + d \cdot \mathbb Z) \cap \llb \min L_i + M, \max L_i - M \rrb \subset L_i \subset \mathscr U_k(H).
	\]
	By the assumptions on the sequence $(L_i)_{i \ge 1}$, this implies at once that
	\begin{equation} \label{equ:4.3}
		(k + d \cdot \mathbb Z) \cap \llb k + M, \rho_k(H) - M \rrb = \bigcup_{i \ge 1} \bigl((k + d \cdot \mathbb Z) \cap \llb k + M, \max L_i - M \rrb\bigr) \subset \mathscr U_k(H).
	\end{equation}
	Likewise, there exists $L \in \mathscr L(H)$ with $\{\lambda_k(H), k\} \subset L$. Then we have
	\begin{equation} \label{equ:4.4}
		(k + d \cdot \mathbb Z) \cap \llb \lambda_k(H) + M, k - M \rrb \subset (k + d \cdot \mathbb Z) \cap \llb \lambda_k(H) + M, \max L - M \rrb \subset L \subset \mathscr U_k(H).
	\end{equation}
	So, putting all the pieces together and noting that $\lambda_k(H) + d \cdot \mathbb Z = k + d \cdot \mathbb Z$, we conclude from~\eqref{equ:4.2},~\eqref{equ:4.3}, and~\eqref{equ:4.4} that
	\[
		(\lambda_k(H) + d \cdot \mathbb N_0) \cap \llb \lambda_k(H) + M, \rho_k(H) - M \rrb \subset \mathscr U_k(H) \subset \lambda_k(H) + d \cdot \mathbb N_0.
	\]
	With this said, we set
	\[
		\rho^\ast = \max\{\rho_j(H): j \in \llb 0, k_0 - 1 \rrb \text{ and } \rho_j(H) < \infty\}
		\quad \text{and} \quad
		M^\prime = \max \{M+k_0, \rho^\ast\}.
	\]
	Then it is clear from the above that, for every $k \in \mathbf N_0$,
	\[
		(\lambda_k(H) + d \cdot \mathbb N_0) \cap \llb \lambda_k (H)+M^\prime, \rho_k (H)-M^\prime \rrb
		\subset \mathscr U_k(H) \subset \lambda_k (H) + d \cdot \N_0 \,,
	\]
	 which finishes the proof.
\end{proof}

As we have seen in Theorem~ \ref{thm:structure-of-sets-of-lengths-and-unions}, being strongly primary and being locally tame are sufficient conditions for a monoid $H$ with non-empty $\Delta(H)$ to have well-structured sets of lengths and unions of sets of lengths. However, there are atomic (Puiseux) monoids with well-structured sets of lengths and unions of sets of lengths that are neither strongly primary nor locally tame. The following example sheds some light upon this observation.

\begin{example}
	For a non-empty finite subset $B$ of $\Q_{> 0}$, consider the Puiseux monoid
	\[
		M_B := \langle b^n :  b \in B \text{ and } n \in \N_0 \rangle.
	\]
	The monoid $M_B$ is called the \emph{rational multicyclic monoid} over $B$ if it is minimally generated by $B$ in the sense that $M_{B'} \subsetneq M_B$ whenever $B' \subsetneq B$. These Puiseux monoids were recently studied in \cite{hP20}, where special emphasis was put on the structure of their sets of lengths (the case where $|B| = 1$ was previously investigated in \cite{Ch-Go-Go20}). A rational multicyclic monoid over $B$ is said to be \emph{canonical} if $B \cap \N = \emptyset$ and $\gcd(\mathsf{d}(b_1), \mathsf{d}(b_2)) = 1$ for all $b_1, b_2 \in B$ with $b_1 \neq b_2$. 
	
	Let $M_B$ be an atomic canonical rational multicyclic monoid. It follows from \cite[Theorem~4.9]{hP20} that the sets of lengths of $M_B$ are unions of finitely many multi-dimensional arithmetic progressions, being arithmetic progressions if $|\mathsf{n}(b_1) - \mathsf{d}(b_1)| = |\mathsf{n}(b_2) - \mathsf{d}(b_2)|$ for all $b_1, b_2 \in M_B$. The special case where $|B| = 1$ was established in \cite[Theorem~3.3]{Ch-Go-Go20}. In addition, \cite[Proposition~4.11]{hP20} guarantees the existence of $N \in \N$ such that for every sufficiently large $k \in \N$, $\mathscr{U}_k(M_B)$ is an almost arithmetic progression with difference $\min \Delta(M_B)$ and bound $N$. If $B = \{r\}$, then it follows from \cite[Corollary~3.4]{Ch-Go-Go20} that $\Delta(M_B) = \{|\mathsf{n}(r) - \mathsf{d}(r)|\}$, while it follows from \cite[Proposition~4.9]{Ch-Go-Go20} that $\mathscr{U}_k(M_B)$ is an arithmetic progression with difference $|\mathsf{n}(r) - \mathsf{d}(r)|$ for every $k \in \N$.

	Let us verify that $M_B$ is not strongly primary. Suppose towards a contradiction that $n M_B \subset 1 + M_B$ for some  $n \in \N$. By \cite[Remark~4.2]{hP20}, $\mathcal{A}(M_B) = \{b^n : b \in B \text{ and } n \in \N_0\}$. Consider the element $x = b + b^2 + \dots + b^n \in nM_B$ for some $b \in B$. It follows from \cite[Remark 4.6]{hP20} that $b + b^2 + \dots + b^n$ is the only factorization of $x$, which contradicts that $x \in 1 + M_B$. Thus, $M_B$ is not strongly primary.
	
	Finally, we verify that $M_B$ is not locally tame provided that $\max B < 1$. Take $b \in B$, and consider the Puiseux monoid $H := \langle b^i : i \in \N_0 \rangle$. Since $\mathcal{A}(H) = \{b^i : i \in \N_0\}$ by \cite[Theorem~6.2]{Go-Go18}, it follows that $\mathcal{A}(H) \subset \mathcal{A}(M_B)$. Because $b < 1$, it follows from \cite[Proposition~5.3]{Ch-Go-Go20} that $\omega(H,1) = \infty$. This, together with the inclusion $\mathcal{A}(H) \subset \mathcal{A}(M_B)$, guarantees that $\omega(M_B,1) = \infty$. As a result, \cite[Theorem~3.6]{Ge-Ha08a} ensures that $M_B$ is not a locally tame monoid. A characterization for local/global tameness in the case where $|B| = 1$ can be found in \cite[Theorem~5.6]{Ch-Go-Go20}.
\end{example}

\smallskip
In the forthcoming  subsections we discuss the parameters occurring in the structural description of sets of lengths given in Theorem~\ref{thm:structure-of-sets-of-lengths-and-unions}.

\smallskip
\subsection{Initial and end parts} Let all notations be as in Theorem~\ref{thm:structure-of-sets-of-lengths-and-unions}. For every positive integer $k$, the set $\mathscr U_k (H) \cap \llb \lambda_k (H), \lambda_k (H)+M' \rrb$ (resp., $\mathscr U_k (H) \cap \llb \rho_k (H) -M', \rho_k (H) \rrb$) is called the {\it initial part} (resp., the {\it end part}) of the set $\mathscr U_k (H)$. Similarly, for each $a \in H$ the set $\mathsf L(a) \cap \llb \min \mathsf L (a), \min \mathsf L (a)+M \rrb$ (resp., $\mathsf L (a) \cap \llb \max \mathsf L (a) -M, \max \mathsf L(a) \rrb$) is called the {\it initial part} (resp., the {\it end part}) of the set $\mathsf L (a)$. In special cases (such as for numerical monoids generated by arithmetic sequences or Puiseux monoids generated by geometric sequences) very explicit descriptions of sets of lengths and their unions are available (in these cases, the initial and end parts are empty; see \cite{A-C-H-P07a, Bl-Ga-Ge11a, Ch-Go-Go20}). Explicit upper bounds for the constant $M$ are given for some one-dimensional local Noetherian domains $R$ with $(R \DP \widehat R) = \{0\}$ (\cite[Section~4]{Ha02a}) and for numerical monoids (\cite{GG-MF-VT15}).
In a variety of settings there are periodicity results for the initial and end parts. For example, if $H$ is a monoid with accepted elasticity (see Subsection~\ref{elast}), then the initial and end parts of the sets $\mathscr U_k (H)$ repeat periodically (\cite[Theorem 1.2]{Tr19a}). But this assumption is far from being necessary (see Example~\ref{exa:strongly-primary-with-irrational-elasticity}). If $R$ is a one-dimensional local Noetherian domain with non-zero conductor $\mathfrak f = (R \DP \widehat R)$ and finite residue class ring $\widehat R/\mathfrak f$, then $R$ is a C-domain (\cite[Theorem~2.11.9]{Ge-HK06a}) and, for every $a \in R^{\bullet}$, the initial and end parts of the sets $\mathsf L(a^n)$ repeat periodically~(\cite{Fo-Ha06a}).

\smallskip
\subsection{The elasticity} \label{elast} As mentioned in Subsection~\ref{background-strongly-primary} (see Theorem~\ref{global-tameness-finitely-primary}), the elasticity of a finitely primary monoid is finite if and only if the monoid has rank~$1$. Let $H$ be a finitely primary monoid of rank~$1$, namely, $H \subset F = F^{\times} \times \mathcal F (\{p\})$, and let $\mathsf v_p \colon H \to \N_0$ denote the homomorphism onto the value semigroup. Suppose that  $\mathsf v_p ( \mathcal A (H)) = \{n_1, \ldots, n_s\}$ with $1 \le n_1 < \dots < n_s$. Then $\rho (H) = n_s/n_1$, and $\rho (H)$ is accepted provided that $F^{\times}/H^{\times}$ is a torsion group. Hence the elasticity of a finitely primary monoid is either rational or infinite (this phenomenon, sometimes called the rational-infinite elasticity property, holds true in larger classes of monoids and domains; see \cite[Section~5]{Go19d}, \cite[Theorem~2.12]{AA92}, and \cite[Theorem~1.1]{Zh19b}). We should notice that there are examples where the factor group $F^{\times}/H^{\times}$ is not a torsion group and the elasticity is not accepted (\cite[Lemma~4.1 and Example~4.2]{Ge-Zh18a}). Lemma~\ref{lem:elasticity-of-Puiseux-monoids} characterizes the strongly primary Puiseux monoids with accepted elasticity. Example~\ref{exa:strongly-primary-with-irrational-elasticity}, on the other hand, provides the first example of an atomic primary monoid with irrational elasticity (the first result in this direction for Dedekind domains was given in~\cite[Theorem~3.2]{AA92}, while a result in contrast for Krull monoids was given in \cite[Theorem~4.2]{Ge-Gr-Sc-Sc10}). In spite of the irrationality of the elasticity, the monoid $H$ in Example~\ref{exa:strongly-primary-with-irrational-elasticity} is strongly primary with non-empty conductor, whence globally tame by Theorem~\ref{thm:a SPPM has finite elasticity iff it is globally tame}. In addition, its unions $\mathscr U_k (H)$ of sets of lengths are finite intervals for all sufficiently large $k \in \N$.

\begin{example} \label{exa:strongly-primary-with-irrational-elasticity}
	Take $\alpha \in \mathbb R_{\ge 1} \setminus \mathbb Q$, and let $H = \mathbb N_0 \cup \mathbb Q_{> \alpha} \subset \mathbb Q_{\ge 0}$. We set
	\[
		A = \{1\} \cup \{q \in \mathbb Q \setminus \mathbb N: \alpha < q \le 1+\alpha\} \quad\text{and}\quad
		\bar{\alpha} = \alpha - \lfloor \alpha \rfloor.
	\]
	We will often use without further comment that $0 < \bar\alpha < 1$ and $1 + \lfloor \alpha \rfloor = \lceil \alpha \rceil < 1 + \alpha$ (since $\alpha \notin \mathbb Z$).
	
	Clearly $H$ is a Puiseux monoid; and by Propositions~\ref{prop:conductor-of-Puiseux-monoids} and Theorem~\ref{thm:strongly primary characterization when conductor non-empty} it is actually a strongly primary Puiseux monoid with non-empty conductor. So we conclude from Theorems~\ref{thm:a SPPM has finite elasticity iff it is globally tame} and part~(b) of Theorem~ \ref{thm:structure-of-sets-of-lengths-and-unions}.\ref{item:structure theorem part 2} that $H$ is globally tame and the unions $\mathscr U_n (H)$ are well structured. We aim to prove that the unions $\mathscr U_n(H)$ are in fact intervals for all sufficiently large $n \in \N$.

	\begin{claim} \label{4.2:claim-5}
		 $\mathcal A(H) = A$, $\rho(H) = 1 + \alpha$,  $\min \Delta(H) = 1$, and $\{2,\lfloor \alpha \rfloor + 1\} \in \mathscr L (H)$. In particular, if $\alpha  \ge 2$, then $\lfloor \alpha \rfloor - 1 \in \Delta (H)$.
	\end{claim}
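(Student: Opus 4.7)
The plan is to establish each of the four assertions in turn, with parts (i)--(ii) requiring only short arguments and the work concentrating in parts (iii)--(iv). First, I verify $\mathcal A(H) = A$. Since $\alpha > 1$ excludes any rational in $(0,1)$ from $H^\bullet$, the minimum of $H^\bullet$ is $1$, so $1 \in \mathcal A(H)$. If a non-integer $q \in A \cap (\alpha, 1+\alpha]$ decomposed as $q = x+y$ with $x, y \in H^\bullet$, then $x, y \ge 1$ and at least one of them (say $x$) would have to be non-integer, hence $x > \alpha$, forcing $y = q - x < 1$, a contradiction. Conversely, any $a \in \mathcal A(H)$ is neither an integer $\ge 2$ (else $a = 1 + (a-1)$) nor a non-integer $> 1+\alpha$ (else $a - 1 > \alpha$ gives $a - 1 \in H^\bullet$). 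Then $\sup A = 1+\alpha$ (approached by rationals) and $\inf A = 1$, so Lemma~\ref{lem:elasticity-of-Puiseux-monoids} yields $\rho(H) = (1+\alpha)/1 = 1 + \alpha$.

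For $\{2, \lfloor \alpha \rfloor + 1\} \in \mathscr L(H)$, I pick a rational non-integer $a \in (2\alpha, 1+\alpha+\lfloor \alpha \rfloor]$; such an $a$ exists because the interval has length $1 - \bar\alpha \in (0,1)$, hence contains infinitely many rationals and at most one integer. I then enumerate the factorizations $a = k\cdot 1 + u_1 + \cdots + u_m$ with non-integer atoms $u_i \in (\alpha, 1+\alpha] \cap (\Q\setminus\Z)$ and length $k + m$, organizing the case analysis by $m$. Since $a \notin \Z$, $m = 0$ is impossible. For $m = 1$, the constraint $u_1 = a - k \in (\alpha, 1+\alpha]$ confines $k$ to $[a-1-\alpha, a-\alpha)$; the irrationality of $\alpha$ places $a - \alpha$ strictly inside $(\lfloor \alpha \rfloor, \lfloor \alpha \rfloor + 1)$, so the unique integer here is $k = \lfloor \alpha \rfloor$, giving length $\lfloor \alpha \rfloor + 1$ with $u_1 = a - \lfloor \alpha \rfloor \in A$. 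For $m = 2$, the analogous bound $a - 2\alpha \in (0, 1-\bar\alpha)$ leaves only $k = 0$, giving length $2$, with explicit realization $u_1 = u_2 = a/2 \in (\alpha, 1+\alpha)$ (non-integer since $a \notin 2\Z$). For $m \ge 3$, the non-integer part sums to more than $3\alpha$, but $a \le 1+\alpha+\lfloor \alpha \rfloor < 3\alpha$ (since $\alpha + \bar\alpha > 1$), so no factorization exists. Hence $\mathsf L(a) = \{2, \lfloor \alpha \rfloor + 1\}$.

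For $\min \Delta(H) = 1$, if $\alpha \in [2, 3)$ the previous paragraph already produces $\{2, 3\} \in \mathscr L(H)$, which gives $1 \in \Delta(H)$. For the remaining values of $\alpha$, I choose by density a rational non-integer $s \in (\alpha, \alpha + \tfrac 12]$ with odd reduced denominator; then $r := \tfrac 12 + s$ has an odd numerator over an even denominator in reduced form and is thus non-integer, while $s, r \in (\alpha, 1+\alpha]$ both lie in $A$. The element $a := 2r = 1 + 2s$ admits the factorizations $r + r$ of length $2$ and $1 + s + s$ of length $3$, so $\{2, 3\} \subseteq \mathsf L(a)$; hence $1 \in \Delta(H) \subseteq \N$, which yields $\min \Delta(H) = 1$. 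Finally, the \emph{in particular} clause is immediate from (iv): when $\alpha \ge 2$, the element $a$ constructed above has $\mathsf L(a) = \{2, \lfloor \alpha \rfloor + 1\}$, whose unique distance is $\lfloor \alpha \rfloor - 1$, so $\lfloor \alpha \rfloor - 1 \in \Delta(H)$. The main technical obstacle is the exhaustiveness of the case analysis in (iv); the key leverage there is the irrationality of $\alpha$, which pins down $k$ uniquely in each $m$-case via half-open intervals of length at most $1$.
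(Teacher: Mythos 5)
Your proof is correct, and on most of the claim it travels the same road as the paper. For $\mathcal A(H)=A$ and $\rho(H)=1+\alpha$ you use the same elementary atom identification and Lemma~\ref{lem:elasticity-of-Puiseux-monoids}; for $\{2,\lfloor\alpha\rfloor+1\}\in\mathscr L(H)$ your rational non-integer $a\in(2\alpha,\,1+\alpha+\lfloor\alpha\rfloor]$ is essentially the paper's own choice $2(\alpha+\varepsilon)$ (with $\bar\alpha+2\varepsilon<1$, that element lies in your interval), and your exhaustive case analysis over the number $m$ of non-integer atoms is a more systematic write-up of the paper's case distinction; the key steps (only $k=\lfloor\alpha\rfloor$ fits when $m=1$, only $k=0$ when $m=2$, and $m\ge 3$ is impossible since $a<3\alpha$) all check out, the irrationality of $\alpha$ correctly excluding the boundary cases. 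Where you genuinely diverge is $\min\Delta(H)=1$: the paper constructs, for each sufficiently large $n$, atoms $\lfloor\alpha\rfloor+(\kappa+1)/n$ and $1+\lfloor\alpha\rfloor+\kappa/n$ satisfying $n\bigl(\lfloor\alpha\rfloor+(\kappa+1)/n\bigr)+(n-1)\cdot 1=n\bigl(1+\lfloor\alpha\rfloor+\kappa/n\bigr)$, so that $\{n,2n-1\}$ lies in one set of lengths, and then invokes $\min\Delta(H)=\gcd\Delta(H)$ from \eqref{catenary} with $n$ varying to force the minimum down to $1$. You instead exhibit a single element $1+2s=2r$, with $s$ a non-integer rational of odd reduced denominator in $(\alpha,\alpha+\tfrac12]$ and $r=s+\tfrac12$, having factorizations of the consecutive lengths $2$ and $3$, which puts $1\in\Delta(H)$ outright; this is more elementary in that it bypasses the gcd property of the set of distances, at the modest cost of the parity argument guaranteeing $r\notin\mathbb Z$, and your preliminary split for $\alpha\in[2,3)$ is harmless but unnecessary since the construction works for every irrational $\alpha>1$. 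The final clause is handled identically in both proofs.
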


	\begin{proof}
		If $q \in \mathbb Q_{> 1+\alpha}$, then $q = (q-1) + 1 \in H^\bullet + H^\bullet$. This shows that $\mathcal A(H) \subset A$. The reverse inclusion is trivial since either $x+y \in \mathbb N_{\ge 2}$ or $x+y > 1 + \alpha$ for all $x, y \in H^\bullet$. Consequently, we conclude from Lemma~\ref{lem:elasticity-of-Puiseux-monoids} that $\rho(H) = \sup \mathcal A(H)/\inf \mathcal A(H) = 1 + \alpha$.
		
		For the rest, observe that $\bar\alpha$ is an irrational number between $0$ and $1$. Accordingly, take $n \in \mathbb N_{\ge 3}$ such that $1/n < \min(\bar{\alpha}, 1 - \bar{\alpha})$, and let $\kappa$ be the largest integer for which $\kappa/n < \bar\alpha$. Then
		\begin{equation}\label{equ:squeezing-ineq}
			1 \le \kappa \le n-2
			\quad\text{and}\quad
			\bar\alpha < (\kappa+1)/n < 1,
		\end{equation}
	and it follows that $a = \lfloor \alpha \rfloor + (\kappa+1)/n$ and $b = 1 + \lfloor \alpha \rfloor + \kappa/n$ are in $A$ (that is, are atoms of $H$). Indeed, it is clear from \eqref{equ:squeezing-ineq} that both $a$ and $b$ are in $\mathbb Q \setminus \mathbb N$; in addition, we have
		\[
			\alpha = \lfloor \alpha \rfloor + \bar\alpha < a < \lfloor \alpha \rfloor + \bar \alpha + \frac{1}{n} < 1 + \lfloor \alpha \rfloor + \frac{1}{n} \le b < 1 + \lfloor \alpha \rfloor + \bar\alpha = 1 + \alpha.
		\]
		On the other hand, one checks that $na + (n-1) \cdot 1 = nb$, with the result that $\{n, 2n-1\} \subset \mathsf L(nb)$ (recall that~$1$ is also an atom of $H$). So $\Delta(H)$ is non-empty, and we infer from~\eqref{catenary} that $\min \Delta(H)$ divides $n-1$. But this is only possible if $\min \Delta(H) = 1$, because $n$ can be any integer $\ge 3$.
		
		Finally, take $\varepsilon \in \mathbb R_{> 0}$ such that $\bar\alpha + 2\varepsilon < 1$ and $2 \bar \alpha + 2 \varepsilon \ne 1$. It is  immediate from the above that $\alpha + \varepsilon$ and $\alpha + \bar\alpha + 2\varepsilon$ are both atoms of $H$. This yields $\{2,  \lfloor \alpha \rfloor + 1\} \subset \mathsf L ( 2(\alpha + \varepsilon))$, because
		\[
			2(\alpha + \varepsilon) = (\alpha + \varepsilon) + (\alpha + \varepsilon) =  (\alpha + \bar\alpha + 2\varepsilon) + \lfloor \alpha \rfloor \cdot 1\,.
		\]
	Moreover, if $2(\alpha + \varepsilon) = u_1 + \cdots + u_n  + k$ for some $n \in \mathbf N_{\ge 2}$, $u_1, \ldots, u_n \in A \setminus \{1\}$, and $k \in \mathbf N$, then $n = 2$ and $k = 0$, or else $
	u_1 + \cdots + u_n  + k > 2\alpha + 1 > 2\alpha + 2\varepsilon$. Consequently, any decomposition of $2(\alpha + \varepsilon)$ into a sum of atoms of $H$ is either of the form $u + v$ with $u, v \in A \setminus \{1\}$ or of the form $u + k$ with $u \in A \setminus \{1\}$ and $k \in \mathbf N^+$ (recall that $\alpha > 1$, and note that $2(\alpha + \varepsilon)$ is not an integer since $2 \bar \alpha + 2 \varepsilon \ne 1$); and in the latter case it is easy to check that $k$ is necessarily equal to $\lfloor \alpha \rfloor$.
	
	So, putting it all together, we find that $\mathsf L ( 2(\alpha + \varepsilon)) = \{2,  \lfloor \alpha \rfloor + 1\}$, which finishes the proof.
	\end{proof}

	\begin{claim} \label{4.2:claim-6}
		Take $n \in \N$ with $n \ge \lceil 2\bar{\alpha}^{-1} \rceil$, and set $\kappa_n = \lfloor n \bar \alpha \rfloor$. Then
		\begin{equation} \label{claim-6-1st-equ}
			\rho_n(H) = n \lceil \alpha \rceil + \kappa_n
			\quad\text{and}\quad
			\llb \rho_n(H)-\kappa_n + 1, \rho_n(H) \rrb \subset \mathscr U_n(H).
		\end{equation}
	\end{claim}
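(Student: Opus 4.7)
The plan is to establish both parts of the claim in tandem, via matching upper and lower bounds on $\rho_n(H)$ together with an explicit family of witnesses showing $n \lceil \alpha \rceil + j \in \mathscr U_n(H)$ for each $j \in \llb 1, \kappa_n \rrb$.

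For the witnesses (which supply both the lower bound and the interval inclusion), fix $j \in \llb 1, \kappa_n \rrb$. The hypothesis $n \ge \lceil 2 \bar{\alpha}^{-1} \rceil$ forces $n\bar\alpha > 2$, hence $\kappa_n \ge 2$; combined with the irrationality of $\bar\alpha$, this gives $0 < j/n \le \kappa_n/n < \bar\alpha$ strictly. Setting $u_j := \lceil \alpha \rceil + j/n$, we obtain $u_j \in \Q \setminus \N$ with $\alpha < u_j < 1 + \alpha$, so $u_j \in \mathcal A(H)$ by Claim~\ref{4.2:claim-5}. Then $a_j := n u_j = n \lceil \alpha \rceil + j$ is a positive integer (in particular in $H$) admitting the length-$n$ factorization $u_j + \cdots + u_j$; and since every atom of $H$ is $\ge 1$, it also admits the (maximal) length-$a_j$ factorization $1 + \cdots + 1$. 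Hence $\{n, n\lceil \alpha \rceil + j\} \subset \mathsf L(a_j)$, yielding $n\lceil \alpha \rceil + j \in \mathscr U_n(H)$ and, for $j = \kappa_n$, $\rho_n(H) \ge n\lceil \alpha \rceil + \kappa_n$.

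For the matching upper bound, fix $a \in H$ with $n \in \mathsf L(a)$ and a factorization $a = w_1 + \cdots + w_n$ with $w_i \in A$. Partition the non-integer atoms into those in $(\alpha, \lceil \alpha \rceil)$ (Case A, fractional parts in $(\bar\alpha, 1)$) and those in $(\lceil \alpha \rceil, 1+\alpha)$ (Case B, fractional parts in $(0, \bar\alpha)$), with counts $m_A$ and $m_B \le n - m_A$; collecting terms gives
\[
a = n + m_A (\lfloor \alpha \rfloor - 1) + m_B \lfloor \alpha \rfloor + j,
\]
where $j$ is the sum of the fractional parts. If $a \in \N$, then $\max \mathsf L(a) = a$ (the all-$1$'s decomposition is available and every atom is $\ge 1$); the bounds $j \le m_A - 1$ when $m_B = 0$ and $j \le m_A + \kappa_{m_B}$ when $m_B \ge 1$ (the latter using irrationality of $m_B \bar\alpha$), combined with the superadditivity $\kappa_n \ge \kappa_{m_B} + \kappa_{n - m_B}$ of the floor function, yield $a \le n \lceil \alpha \rceil + \kappa_n$. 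If $a \notin \N$, then any factorization requires at least one non-integer atom (each $> \alpha > 1$); extracting one such atom and filling the remainder with $1$'s gives $\max \mathsf L(a) \le 1 + \lfloor a - \alpha \rfloor$, and combining with $a < n(1+\alpha)$ (strict by irrationality) and $\lfloor \alpha \rfloor \ge 1$ again yields $\max \mathsf L(a) \le n \lceil \alpha \rceil + \kappa_n$.

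The main obstacle is the integer case of the upper bound: every estimate ultimately reduces to the identity $n(1+\alpha) = n \lceil \alpha \rceil + n \bar\alpha$ and the irrationality of $n\bar\alpha$, but the split into Cases A and B forces one to bound $j$ in terms of $\kappa_{m_B}$ rather than $\kappa_n$, so the inequality can only be closed uniformly in $m_A, m_B$ by invoking the superadditivity of the floor function.
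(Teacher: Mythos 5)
Your proposal is correct, and its first half (the witnesses) coincides with the paper's: the atoms $\lceil \alpha \rceil + j/n$ for $j \in \llb 1, \kappa_n \rrb$ are exactly the atoms $a_{n,i}$ used there, giving $\{n,\, n\lceil\alpha\rceil + j\} \subset \mathsf L\bigl(n(\lceil\alpha\rceil + j/n)\bigr)$ and hence both the interval inclusion and $\rho_n(H) \ge n\lceil\alpha\rceil + \kappa_n$. Where you genuinely diverge is the upper bound. The paper gets it in two lines: if $\rho_n(H) \ge n\lceil\alpha\rceil + \kappa_n + 1$, then $\rho_n(H)/n \ge \lceil\alpha\rceil + (\kappa_n+1)/n > \lceil\alpha\rceil + \bar\alpha = 1+\alpha = \rho(H)$, contradicting the standard inequality $\rho_n(H) \le n\,\rho(H)$ (\cite[Proposition~1.4.2.3]{Ge-HK06a}) together with $\rho(H) = 1+\alpha$ from Claim~\ref{4.2:claim-5}. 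You instead bound $\max \mathsf L(a)$ directly for every $a$ with $n \in \mathsf L(a)$, splitting into $a \in \N$ (where $\max\mathsf L(a) = a$, since all atoms are $\ge 1$) and $a \notin \N$, and doing explicit fractional-part bookkeeping on the length-$n$ factorization; your estimates check out (in the integer case, monotonicity $\kappa_{m_B} \le \kappa_n$ together with $(m_A+m_B)\lfloor\alpha\rfloor \le n\lfloor\alpha\rfloor$ already closes the inequality, so the appeal to superadditivity of the floor is not actually needed, and the degenerate case $m_A = m_B = 0$, where $a = n$, should be noted separately since $j \le m_A - 1$ fails there). The trade-off: your argument is self-contained and elementary, needing only the description of $\mathcal A(H)$, and in effect re-proves the inequality $\rho_n(H) \le n\rho(H)$ in this concrete setting; the paper's argument is shorter and cleaner by outsourcing that inequality and the value of $\rho(H)$ to known results, at the cost of an external citation.
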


	\begin{proof}
		Note that $n \ge 3$ and $1 \le \kappa_n < n$, fix $i \in \llb 0, \kappa_n-1 \rrb$, and set $a_{n,i} =  \lceil \alpha \rceil + (\kappa_n-i)/n$. We get from Claim~\ref{4.2:claim-5} that $a_{n,i}$ is an atom of $H$, because
		\[
			a_{n,i} \in \mathbb Q \setminus \mathbb N
			\quad\text{and}\quad
			\alpha < \lceil \alpha \rceil + \frac{1}{n} \le a_{n,i} <  \lceil \alpha \rceil + \frac{\kappa_n}{n} <  \lceil \alpha \rceil + \bar\alpha = 1 + \alpha\,.
		\]
		Since $na_{n,i} = n \lceil \alpha \rceil + \kappa_n - i \in \mathbb N^+$ and $1$ is also an atom of $H$, it follows that
		\[
			\{n, n \lceil \alpha \rceil + \kappa_n - i\} \subset \mathsf L(na_{n,i}) \subset \mathscr U_n(H).
		\]
		Then $\rho_n(H) \ge n \lceil \alpha \rceil + \kappa_n$, and we aim to show that this last inequality cannot be strict. For, assume the contrary. Because $\alpha$ is irrational and $\kappa_n = \lfloor n \bar \alpha \rfloor$, we see that $\bar\alpha < (\kappa_n+1)/n$. Consequently, we conclude from Claim~\ref{4.2:claim-5} and \cite[Proposition~1.4.2.3]{Ge-HK06a} that
		\[
			1 + \alpha = \lceil \alpha \rceil + \bar \alpha < \lceil \alpha \rceil + \frac{\kappa_n + 1}{n} \le \frac{\rho_n(H)}{n} \le \rho(H) = 1 + \alpha,
		\]
		which is a contradiction.
	\end{proof}

	\begin{claim} \label{4.2:claim-7}
		Let $n$ be an integer with $n \ge \lceil 3\,\bar \alpha^{-1}\alpha \rceil^2$. Then there exists a unique pair $(\ell_n, r_n) \in \mathbb N \times \mathbb N$ such that
		\begin{equation*}\label{equ:claim6-first-condition}
		\ell_n \ge 2,
		\quad
		n = \ell_n \lceil \alpha \rceil + r_n,
		\quad\text{and}\quad
		\frac{r_n}{\ell_n} < \bar\alpha < \frac{r_n + \lceil \alpha \rceil}{\ell_n - 1}.
		\end{equation*}
		In addition,
		\begin{equation}\label{claim-7-1st-equ}
			\lambda_n(H) = \ell_n
			\quad\text{and}\quad
			\llb \lambda_n(H), \lambda_n(H) + r_n - 1 \rrb \subset \mathscr U_n(H) \quad\text{for every large } n \in \mathbb N.
		\end{equation}
	\end{claim}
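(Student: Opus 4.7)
The plan is to proceed in three steps: (1) derive the uniqueness and existence of the pair $(\ell_n,r_n)$; (2) establish the lower bound $\lambda_n(H)\ge\ell_n$; (3) exhibit factorizations of $n \in H$ realizing every length in $\llb\ell_n,\ell_n+r_n-1\rrb$.

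For step (1), I substitute $r_n = n-\ell_n\lceil\alpha\rceil$ into the flanking condition on $(\ell_n,r_n)$ and use $\bar\alpha+\lceil\alpha\rceil = 1+\alpha$. A short calculation shows that the condition $r_n/\ell_n < \bar\alpha < (r_n+\lceil\alpha\rceil)/(\ell_n-1)$ is equivalent to
\[
\frac{n}{1+\alpha}<\ell_n<\frac{n}{1+\alpha}+1.
\]
Since $\alpha$ is irrational, this open unit interval contains exactly one integer, forcing $\ell_n=\lceil n/(1+\alpha)\rceil$. The bound $\ell_n\le n/(1+\alpha)+1$ then yields $r_n\ge n\bar\alpha/(1+\alpha)-\lceil\alpha\rceil$, and the hypothesis $n\ge\lceil 3\bar\alpha^{-1}\alpha\rceil^2$ comfortably guarantees $\ell_n\ge 2$ and $r_n\ge 1$.

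For step (2), suppose $b\in H$ admits factorizations of lengths $n$ and $\ell$. Every atom is $\ge 1$ (since $\alpha\ge 1$), so $b\ge n$; and every atom is strictly less than $1+\alpha$ (atoms are rational while $1+\alpha$ is irrational by Claim~\ref{4.2:claim-5}), so $b<\ell(1+\alpha)$. Combining, $\ell>n/(1+\alpha)$, and hence $\ell\ge\lceil n/(1+\alpha)\rceil=\ell_n$.

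For step (3), I take $b=n\in H$, so that $n\in\mathsf L(n)$ via the all-ones factorization. For each $k\in\llb\ell_n,\ell_n+r_n-1\rrb$ I aim to construct a length-$k$ factorization of the form $n=(k-\ell_n)\cdot 1+v_1+\cdots+v_{\ell_n}$ with $v_1,\dots,v_{\ell_n}\in\mathcal A(H)\setminus\{1\}$ summing to the integer $m:=n-k+\ell_n\in\llb n-r_n+1,n\rrb$. A direct check using the inequalities from step (1) shows $m\in(\ell_n\alpha,\ell_n(1+\alpha)]$, so the uniform choice $v_i=m/\ell_n$ lies in $(\alpha,1+\alpha]\cap\Q$; it is non-integer, as required, except in the single edge case $m=\ell_n\lceil\alpha\rceil$, where I perturb to $v_1=\lceil\alpha\rceil-(\ell_n-1)\delta$ and $v_2=\cdots=v_{\ell_n}=\lceil\alpha\rceil+\delta$ for a small positive rational $\delta$ (feasible because $\ell_n\ge 2$ and both $\bar\alpha$ and $1-\bar\alpha$ lie in $(0,1)$). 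Together with step (2), this yields $\lambda_n(H)=\ell_n$ and the inclusion $\llb\ell_n,\ell_n+r_n-1\rrb\subset\mathsf L(n)\subset\mathscr U_n(H)$. The main obstacle I expect is the perturbation subcase in step (3): staying inside $(\alpha,1+\alpha]$ while avoiding integer values is the one place where both $\ell_n\ge 2$ and the irrationality of $\bar\alpha$ really matter.
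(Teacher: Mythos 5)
Your proof is correct, and its core construction coincides with the paper's: for a target length $k=\ell_n+i$ with $i \in \llb 0, r_n-1\rrb$, your uniform atom $m/\ell_n$ (where $m=n-i$) is exactly the atom $b_{n,i}=\lceil\alpha\rceil+(r_n-i)/\ell_n$ that the paper uses, and the factorizations $n = i\cdot 1 + \ell_n\cdot b_{n,i}$ are the same. Where you genuinely differ is in the surrounding bookkeeping, and in both places your version is more streamlined. For existence and uniqueness, the paper writes $n=q\lceil\alpha\rceil+r_0$ and locates the unique crossing of the strictly increasing function $\phi(k)=(r_0+k\lceil\alpha\rceil)/(q-k)$ past $\bar\alpha$; you instead observe that the two flanking inequalities are equivalent to $n/(1+\alpha)<\ell_n<n/(1+\alpha)+1$, so that $\ell_n=\lceil n/(1+\alpha)\rceil$ is forced by the irrationality of $\alpha$ --- this yields an explicit formula for $\ell_n$ and makes the role of the largeness hypothesis (ensuring $\ell_n\ge 2$ and $r_n\ge 1$) transparent. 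For the lower bound $\lambda_n(H)\ge\ell_n$, the paper argues by contradiction from $\rho(H)=1+\alpha$ together with the inequality $1/\rho(H)\le \lambda_n(H)/n$ cited from the literature, whereas you give a self-contained two-line version of the same estimate: every atom lies in $[1,1+\alpha)$, the upper bound being strict because atoms are rational while $1+\alpha$ is not. One small remark: the edge case you treat by perturbation never occurs, since $m=n-(k-\ell_n)\ge n-r_n+1=\ell_n\lceil\alpha\rceil+1$, so $m/\ell_n$ lies strictly between $\lceil\alpha\rceil$ and $\lceil\alpha\rceil+1$ and is automatically a non-integer; your perturbation is valid, but redundant.
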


	\begin{proof}
		By the division algorithm, we may write $n = q \lceil \alpha \rceil + r_0$, where $q \in \mathbb N$ and $r_0 \in \llb 0, \lceil \alpha \rceil - 1 \rrb$. Note that $q \ge \lceil 3\,\bar\alpha^{-1}\alpha \rceil$, or else $q \lceil \alpha \rceil + r_0 \le \lceil 3\,\bar\alpha^{-1} \alpha \rceil^2 - 1 < n$. Also, observe that if $n = \ell \lceil \alpha \rceil + r$ for some $\ell, r \in \mathbb N_0$, then $q \ge \ell$ and $r = r_0 + (q-\ell) \lceil \alpha \rceil \ge r_0$. Accordingly, consider the function
		\[
			\phi \colon \llb 0, q-1 \rrb \to \mathbb Q_{\ge 0} \ \text{ determined by } \ k \mapsto \frac{r_0 + k \lceil \alpha \rceil}{q-k}.
		\]
		Since $q \ge \lceil 3\bar\alpha^{-1}\alpha \rceil \ge 4$, the discrete interval $\llb 1, q-2 \rrb$ is non-empty, and a simple calculation shows that
		\[
			\phi(1) = \frac{r_0 + \lceil \alpha \rceil}{q - 1} < \bar \alpha < \lceil \alpha \rceil \le \frac{r_0 + (q-2) \lceil \alpha \rceil}{2} = \phi(q-2).
		\]
		Because $\phi$ is strictly increasing, there exists a unique $k_\ast \in \llb 1, q-2 \rrb$ such that 
		\[
			\phi(0) = \frac{r_0}{q} < \phi(k_\ast) < \bar\alpha < \phi(k_\ast+1) \le \phi(q-1). 
		\]
		Set
		$\ell_n = q-k_\ast \in \mathbb N_{\ge 2}$ and $r_n = r_0 + k_\ast \lceil \alpha \rceil \in \mathbb N$, and note from the above that $(\ell_n, r_n)$ is the unique pair of positive integers with the property that
		\begin{equation} \label{equ:meeting-the-conditions}
			\ell_n \ge 2, \quad n = \ell_n \lceil \alpha \rceil + r_n,
			\quad\text{and}\quad
			\frac{r_n}{\ell_n} < \bar\alpha < \frac{r_n + \lceil \alpha \rceil}{\ell_n - 1} \,.
		\end{equation}
		Accordingly, define
		\[
			b_{n,i} = \lceil \alpha \rceil + \frac{r_n - i}{\ell_n},
			\quad\text{for } i \in \llb 0, r_n - 1 \rrb \,.
		\]
		Given $i \in \llb 0, r_n - 1 \rrb$, it follows from Claim~\ref{4.2:claim-5} that $b_{n,i}$ is an atom of $H$, because
		\[
			\alpha < \lceil \alpha \rceil + \frac{1}{\ell_n} \le b_{n,i} \le \lceil \alpha \rceil + \frac{r_n}{\ell_n} < \lceil \alpha \rceil + \bar\alpha = 1 + \alpha\,;
		\]
		in particular, $b_{n,i} \notin \mathbb N$. Moreover, it is clear that $n = \ell_n b_{n,i} + i$. So recalling that $1$ is also an atom of $H$, we find that $\{\ell_n + i, n\} \subset \mathsf L(n) \subset \mathscr U_n(H)$. Consequently,
		\[
			\llb \ell_n, \ell_n + r_n - 1 \rrb \subset \mathscr U_n(H)
			\quad\text{and}\quad
			\lambda_n(H) \le \ell_n.
		\]
		It remains to demonstrate that $\lambda_n(H) \ge \ell_n$. Suppose to the contrary that $\lambda_n(H) \le \ell_n - 1$. Then we infer from~\eqref{equ:meeting-the-conditions} that
		\[
			\frac{n}{\ell_n - 1} = \lceil \alpha \rceil + \frac{\lceil \alpha \rceil + r_n}{\ell_n - 1} > \lceil \alpha \rceil + \bar{\alpha} = 1 + \alpha\,.
		\]
		But in view of Claim~\ref{4.2:claim-5} and \cite[Lemma~3.3.1]{Ga-Ge09b}, this yields
		\[
			\frac{1}{1+\alpha} = \frac{1}{\rho(H)} \le \frac{\lambda_n(H)}{n} \le \frac{\ell_n - 1}{n} < \frac{1}{1 + \alpha},
		\]
		which is a contradiction and finishes the proof.
	\end{proof}

	\begin{claim}\label{4.2:claim-8}
		$\mathscr U_n(H)$ is an interval for all but finitely many $n \in \mathbb N$.
	\end{claim}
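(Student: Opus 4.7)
The plan is to combine the general structural bound from Theorem~\ref{thm:structure-of-sets-of-lengths-and-unions}.\ref{item:structure theorem part 2}(b) with the explicit descriptions of the initial and end blocks of $\mathscr U_n(H)$ already obtained in Claims~\ref{4.2:claim-6} and~\ref{4.2:claim-7}. Since $d := \min \Delta(H) = 1$ by Claim~\ref{4.2:claim-5}, Theorem~\ref{thm:structure-of-sets-of-lengths-and-unions} supplies an absolute constant $M' \in \mathbb N_0$ such that
\[
\llb \lambda_n(H)+M',\, \rho_n(H)-M' \rrb \subset \mathscr U_n(H) \subset \llb \lambda_n(H),\, \rho_n(H) \rrb
\]
for every large $n$ (the right-hand inclusion using $d=1$ together with the finiteness of $\rho_n(H)$ granted by Claim~\ref{4.2:claim-6}). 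Thus it suffices to show that, for all sufficiently large $n$, the initial block $\llb \lambda_n(H), \lambda_n(H)+r_n-1\rrb$ from Claim~\ref{4.2:claim-7} and the end block $\llb \rho_n(H)-\kappa_n+1, \rho_n(H)\rrb$ from Claim~\ref{4.2:claim-6} each reach the central interval without leaving a gap.

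On the end side this is immediate: $\kappa_n = \lfloor n \bar\alpha \rfloor \to \infty$ because $\bar\alpha > 0$, so eventually $\kappa_n > M'$ and the end block covers $\llb \rho_n(H)-M', \rho_n(H)\rrb$. On the initial side, I will extract the asymptotics of $\ell_n$ and $r_n$ from the pinching inequality of Claim~\ref{4.2:claim-7}. The left pinch $r_n/\ell_n < \bar\alpha$, combined with $n = \ell_n\lceil\alpha\rceil + r_n$, gives $n < \ell_n(\lceil\alpha\rceil + \bar\alpha) = \ell_n(1+\alpha)$, whence $\ell_n > n/(1+\alpha) \to \infty$. Plugging this into the right pinch $\bar\alpha < (r_n + \lceil\alpha\rceil)/(\ell_n-1)$ yields $r_n > \bar\alpha(\ell_n - 1) - \lceil\alpha\rceil$, so $r_n \to \infty$ as well. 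In particular, eventually $r_n > M'$, and the initial block then covers $\llb \lambda_n(H), \lambda_n(H)+M'\rrb$.

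Once both $r_n > M'$ and $\kappa_n > M'$, the union of the initial, middle, and end blocks is the full discrete interval $\llb \lambda_n(H), \rho_n(H)\rrb$; combined with the reverse inclusion from the first paragraph, this proves $\mathscr U_n(H) = \llb \lambda_n(H), \rho_n(H)\rrb$ for all but finitely many $n$, which is precisely the claim. The only non-cosmetic step is the asymptotic $r_n \to \infty$, but as just sketched, this is a one-line consequence of the pinching inequality; every other ingredient is read off directly from the preceding claims.
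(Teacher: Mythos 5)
Your proposal is correct and follows essentially the same route as the paper: invoke Theorem~\ref{thm:structure-of-sets-of-lengths-and-unions}.\ref{item:structure theorem part 2}(b) with $d=\min\Delta(H)=1$ for the central interval, then use the explicit blocks $\llb \rho_n(H)-\kappa_n+1,\rho_n(H)\rrb$ and $\llb \lambda_n(H),\lambda_n(H)+r_n-1\rrb$ from Claims~\ref{4.2:claim-6} and~\ref{4.2:claim-7} together with $\kappa_n, r_n \to \infty$. The only difference is that you spell out the (correct) pinching argument for $r_n \to \infty$, which the paper leaves as immediate.
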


	\begin{proof}
		Take $n \in \N$ with $n \ge \lceil 3\,\bar \alpha^{-1} \alpha \rceil^2$. Accordingly, let $\kappa_n$ and $r_n$ be defined as in Claims~\ref{4.2:claim-6} and~\ref{4.2:claim-7}. We have already observed that $H$ is globally tame and strongly primary. Hence it follows from Claim~\ref{4.2:claim-5} and  part~(b) of Theorem~\ref{thm:structure-of-sets-of-lengths-and-unions}.\ref{item:structure theorem part 2} that there exists $M \in \mathbb N_0$ such that $\mathscr U_n(H) \cap \llb \lambda_k(H) + M, \rho_k(H) - M \rrb$ is an interval for all but finitely many $n \in \mathbb N$. We are left to show that
		\[
			\llb \lambda_n(H), \lambda_n(H) + M \rrb \cup \llb \rho_n(H) - M, \rho_n(H) \rrb \subset \mathscr U_n(H)
			\quad\text{for every large } n \in \mathbb N.
		\]
		But this is immediate from~\eqref{claim-6-1st-equ} and~\eqref{claim-7-1st-equ} when considering that $\lim_{n \to \infty} \kappa_n = \lim_{n \to \infty} r_n =  \infty$.
	\end{proof}
\end{example}

\smallskip
\subsection{The set of distances} \label{distances} We know only little about  the set of distances of  locally tame strongly primary monoids. By~\eqref{catenary}, any atomic monoid $H$ with $\Delta (H) \ne \emptyset$ satisfies $\min \Delta (H) = \gcd \Delta (H)$. The standing conjecture is that every finite set $\Delta \subset \N$ with $\gcd \Delta = \min \Delta$ is realizable as the set of distances of a numerical monoid, but this has been proved only for two-element sets~(\cite{Co-Ka17a}).  More is known about the possible minima of sets of distances and, as always, the seminormal case is  special. Indeed, if $R$ is a seminormal one-dimensional local Mori domain, then $R^{\bullet}$ is seminormal finitely primary and for every seminormal finitely primary monoid $H$ we have  $\Delta (H) \subset \{1\}$  (\cite[Lemmas~3.4 and~3.6]{Ge-Ka-Re15a}). In that case all sets of lengths and all their unions are intervals. If we drop the seminormality assumption, then every $d\in \N$ occurs as the minimum of a set of distances. More precisely, for every $d \ge 2$ there is a finitely primary monoid $H$ of rank two with $\min \Delta (H)=d$ (\cite[Example~3.1.9]{Ge-HK06a}). If $H$ is a numerical monoid with $\mathcal A (H) = \{n_1, \dots, n_s\}$ where $s \ge 2$ and $1 < n_1 < \dots < n_s$, then $\min \Delta (H) =  \gcd  \{ n_i - n_{i-1} : i \in \llb 2,s \rrb \}$ (\cite[Theorem~2.9]{B-C-K-R06}).

\smallskip
\subsection{The (monotone) catenary degree} By~\eqref{catenary}, we have $2 + \max \Delta (H) \le \mathsf c (H) \le \mathsf c_{\mon} (H)$. If $H = \langle n_1, n_2 \rangle \subset (\N_0,+)$ is a numerical monoid such that $1 < n_1 < n_2$, then $\Delta (H) = \{n_2-n_1\}$ and $\mathsf c(H)=\mathsf c_{\mon} (H) = n_2$ (\cite[Example~3.1.6]{Ge-HK06a}). Thus, the first inequality can be strict. Catenary degrees of numerical monoids have received some attention in the literature. A survey on computational aspects is given in \cite{GS16a}. The finiteness of the monotone catenary degree of numerical monoids was proved in \cite[Theorem~3.9]{Fo06a}. By  \cite[Theorem~4.2]{ON-Pe18a}, every finite subset $C$ of $\N_{\ge 2}$ with $\max C \ge 3$ can be realized as the set of positive catenary degrees of elements of a numerical monoid. Theorem~\ref{thm:structure-of-sets-of-lengths-and-unions} shows that the catenary degree of locally tame strongly primary monoids is finite. However, there are finitely primary monoids  having infinite monotone catenary degree but, on the other hand, there are conditions on finitely primary monoids enforcing the finiteness of the monotone catenary degree~(\cite{Fo06a}). Let $R$ be a one-dimensional local Noetherian domain with maximal ideal $\mathfrak m$ and non-zero conductor $(R \DP \widehat R)$. If the rank $s$ is at most $2$ and  $|\max ( \widehat R)| \le |R/\mathfrak m| < \infty$, then $R$ is a C-domain and $\mathsf c_{\mon} (R^{\bullet}) < \infty$ (\cite[Corollary~5.7 and Proposition~5.12]{Ge-Re19d}). There are domains $R$ as above  with $s=3$ and $\mathsf c_{\mon} (R^{\bullet})=\infty$ (\cite[Examples~6.3 and~6.5]{Ha09c}).
\bigskip

\section*{Acknowledgments}

We are grateful to an anonymous reviewer for her/his attentive reading and comments which have helped to improve the exposition of the paper.

\providecommand{\bysame}{\leavevmode\hbox to3em{\hrulefill}\thinspace}
\providecommand{\MR}{\relax\ifhmode\unskip\space\fi MR }
\providecommand{\MRhref}[2]{%
  \href{http://www.ams.org/mathscinet-getitem?mr=#1}{#2}
}
\providecommand{\href}[2]{#2}

\end{document}